\documentclass[11pt]{article}
\usepackage{amsmath, amssymb, amsthm}
\usepackage{verbatim}
\usepackage{multicol}
\usepackage{enumerate}
\usepackage{comment}
\usepackage{dsfont}
\usepackage[none]{hyphenat}
\usepackage[unicode]{hyperref}
\hypersetup{
	colorlinks=true,
	linkcolor=blue,
	filecolor=magenta,      
	urlcolor=cyan,
	citecolor=blue
}
\usepackage{pgf}
\usepackage{tikz}
\usetikzlibrary{positioning,arrows,shapes,decorations.markings,decorations.pathreplacing,matrix,patterns}
\tikzstyle{vertex}=[circle,draw=black,fill=black,inner sep=0,minimum size=3pt,text=white,font=\footnotesize]
\usepackage{cleveref}

\date{}
\title{\vspace{-1.2cm} Bisection Width, Discrepancy, and Eigenvalues of Hypergraphs}

\author{Eero R\"aty\thanks{Ume\r{a} University, \emph{e-mail}: \textbf{eero.raty@umu.se}. Supported by a postdoctoral grant from the Osk.\ Huttunen Foundation.}, 
	Istv\'an Tomon\thanks{Ume\r{a} University, \emph{e-mail}: \textbf{istvan.tomon@umu.se}. Supported by the Swedish Research Council grant 2023-03375.}
}

\oddsidemargin  0pt
\evensidemargin 0pt
\marginparwidth 40pt
\marginparsep 10pt
\topmargin 10pt
\headsep 10pt
\textheight 8.8in
\textwidth 6.6in

\theoremstyle{plain}
\newtheorem{theorem}{Theorem}[section]

\newtheorem{claim}[theorem]{Claim}
\newtheorem{lemma}[theorem]{Lemma}

\newtheorem*{problem*}{Problem}

\Crefname{theorem}{Theorem}{Theorems}
\Crefname{definition}{Definition}{Definitions}
\Crefname{corollary}{Corollary}{Corollaries}
\Crefname{claim}{Claim}{Claims}
\Crefname{lemma}{Lemma}{Lemmas}
\Crefname{conjecture}{Conjecture}{Conjectures}
\Crefname{problem}{Problem}{Problems}
\Crefname{prop}{Proposition}{Propositions}

\theoremstyle{definition}
\newtheorem{definition}{Definition}

\DeclareMathOperator{\disc}{disc}

\DeclareMathOperator{\vol}{vol}

\DeclareMathOperator{\bw}{bw}

\begin{document}
	
	\maketitle
	\sloppy

\begin{abstract}
A celebrated result of Alon from 1993 states that any $d$-regular graph on $n$ vertices (where $d=O(n^{1/9})$) has a bisection with at most $\frac{dn}{2}(\frac{1}{2}-\Omega(\frac{1}{\sqrt{d}}))$ edges, and this is optimal. Recently, this result was greatly extended by R\"aty, Sudakov, and Tomon. We build on the ideas of the latter, and use a semidefinite programming inspired approach to prove the following variant for hypergraphs: every $r$-uniform $d$-regular hypergraph on $n$ vertices (where $d\ll n^{1/2}$) has a bisection of size at most $$\frac{dn}{r}\left(1-\frac{1}{2^{r-1}}-\frac{c}{\sqrt{d}}\right),$$
for some $c=c(r)>0$. This bound is the best possible up to the precise value of $c$. Moreover, a bisection achieving this bound can be found by a polynomial-time randomized algorithm.

The  minimum bisection is closely related to discrepancy. We also prove sharp bounds on the discrepancy and so called positive discrepancy of hypergraphs, extending results of Bollob\'as and Scott. Furthermore, we discuss implications about Alon-Boppana type bounds. We show that if $H$ is an $r$-uniform $d$-regular hypergraph, then certain notions of second largest eigenvalue $\lambda_2$ associated with the adjacency tensor satisfy  $\lambda_2\geq \Omega_r(\sqrt{d})$, improving results of Li and Mohar.
\end{abstract}
 
\section{Introduction}

\subsection{Bisection width}

An \emph{equipartition} of a finite set is a partition into two parts, whose sizes differ by at most one. A  \emph{bisection} of a graph $G$ is an equipartition of its vertex set, together with all the edges containing one vertex in each part, and the \emph{size} of a bisection is the number of its edges. The \emph{bisection width} or \emph{minimum bisection}  of a graph $G$ is the minimum size of a bisection, and it is denoted by $\bw(G)$. Due to its importance in theoretical computer science, the algorithmic aspect of the bisection width problem has been studied extensively. For the background and recent advances in the area, see e.g.\ \cite{HLRW, HRW, Karger, KT, Li}

A fundamental result in this topic is due to Alon \cite{Alon93} (see also \cite{AHK99}), which states that every $d$-regular graph on $n$ vertices has bisection width at most $\frac{nd}{4}- \Omega(\sqrt{d}n)$ whenever $d = O\left(n^{1/9}\right)$. On the other hand, Bollob\'{a}s \cite{BB88} proved that the bisection width of almost all $d$-regular graph is at least $\frac{nd}{4} - \frac{n\sqrt{d \ln 2}}{2}$, thus matching the bound of Alon \cite{Alon93} up to a constant factor in the error term. The problem of improving these bounds for random $d$-regular graphs has been a line of active research in the interface of combinatorics and theoretical computer science, for example due to its connections with finding internal partitions. Due to the difficulty of the problem, a specific point of focus has been on small constant values of $d$, see e.g.\ the works  \cite{BNP, DDSW, DSW, KM, MP}.

Recently, R\"aty, Sudakov, and Tomon \cite{RST} substantially extended the result of Alon \cite{Alon93} in the range $d \leq (\frac{1}{2}-\varepsilon)n$. They proved that $\frac{nd}{4} - \Omega(\sqrt{d}n)$ remains an upper bound for the bisection width if $d = O\left(n^{2/3}\right)$, thus random graphs minimize this quantity asymptotically. However, surprisingly, in case $n^{2/3}\ll d< \left(\frac{1}{2}-\varepsilon\right)n$, the situation completely changes. When $n^{2/3}\leq d\leq n^{3/4}$, the maximum of the bisection width among $d$-regular graphs is   $\frac{nd}{4} - \Theta\left(n^2/d\right)$, and random graphs are beaten by certain families of strongly-regular graphs. The range $n^{3/4}\leq d\leq \left(\frac{1}{2}-\varepsilon\right)n$ is even more mysterious, we refer the interested reader to \cite{RST} for further details.

\medskip

The bisection width also extends to hypergraphs naturally as follows. The \emph{bisection} of a hypergraph $H$ is an equipartition of its vertex set, together with all the edges that contain at least one vertex in each part. The \emph{bisection width} of a hypergraph $H$ is the minimum number of edges in a bisection, and we again denote it by $\bw(H)$. The study of hypergraph bisection width has attracted attention from the algorithmic point of view \cite{FK,KWMY,RSS}. A particular topic of interest has been the hypergraph-generalisation of the $s$-$t$ cut problem. Here the aim is to find a minimum cut (not necessarily bisection) of $H$ that has two predetermined vertices $s$ and $t$ placed on the different sides of the cut \cite{CX}.

In this paper, our goal is to study the minimum bisection problem for hypergraphs from an extremal point of view, in particular to generalize the aforementioned results of Alon \cite{Alon93}, and R\"aty, Sudakov, and Tomon \cite{RST}. We remark that the case of 3-uniform hypergraphs can be reduced to multigraphs, as the size of a bisection in a 3-uniform hypergraph is equal to half the size of the corresponding bisection of the underlying multi-graph (i.e., the graph in which each edge is included as many times as it appears in a hyperedge). However, no similar reduction is possible if the uniformity is at least $4$.

Let us consider the minimum bisection of an  $r$-uniform hypergraph $H$ with $n$ vertices and average degree $d$. The expected size of a random bisection in $H$ is $$e(H)\cdot \left(1-\frac{1}{2^{r-1}}+\Theta_r\left(\frac{1}{n}\right)\right),$$
so this is always a trivial upper bound for $\bw(H)$. On the other hand, if $H$ is the random hypergraph in which every edge is included independently with probability $p=d/\binom{n-1}{r-1}\leq 1/2$, then the average degree of $H$ is $\approx d$ and
$$\bw(H)\geq e(H)\cdot \left(1-\frac{1}{2^{r-1}}\right)-O(\sqrt{d}n)$$
with high probability. See the next subsection for a detailed argument. As one of our main results, we establish an upper bound for the bisection width of $d$-regular hypergraphs that matches the previous lower bound, given $d$ is not too large with respect to $n$.

\begin{theorem}\label{thm:bisection}
Let $r\geq 2$ be an integer, then there exists $c,\varepsilon>0$ such that the following holds. Let $H$ be an $r$-uniform $d$-regular hypergraph on $n$ vertices, where $d \leq \varepsilon n^{1/2}$. Then 
$$\bw(H) \leq  e(H)\cdot\left(1 - \frac{1}{2^{r-1}}\right) - c\sqrt{d}n.$$
\end{theorem}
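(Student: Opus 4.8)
The plan is to follow the semidefinite programming inspired strategy of \cite{RST}, adapted to the hypergraph setting. The starting observation is that a bisection of size significantly below the random value $e(H)(1-2^{-(r-1)})$ is, up to lower order terms, equivalent to finding a balanced $\pm 1$ vector $x\in\{-1,1\}^{V(H)}$ (with $\sum_v x_v$ close to $0$) that simultaneously makes many edges monochromatic. For a single edge $e=\{v_1,\dots,v_r\}$, the probability that it is cut by a uniformly random balanced $\pm 1$ assignment is $1-2^{-(r-1)}+o(1)$, while the ``savings'' over this baseline on $e$ can be written as a low-degree polynomial in the variables $x_{v_1},\dots,x_{v_r}$: the indicator that $e$ is monochromatic equals $2^{-(r-1)}\bigl(1+\sum_{\emptyset\neq S\subseteq e,\ |S|\ \mathrm{even}} \prod_{v\in S} x_v\bigr)$ after symmetrizing, and one checks that the dominant non-constant contribution comes from the quadratic terms $x_u x_v$. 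So I would first reduce the theorem to showing that, summing over all edges, one can choose a balanced $x$ with $\sum_{e}\sum_{\{u,v\}\subseteq e} x_u x_v \le -\Omega(\sqrt{d}\,n)$, i.e.\ that a suitable weighted graph $G$ on $V(H)$ (where $uv$ has weight equal to the number of hyperedges containing both $u$ and $v$, so $G$ is $d(r-1)$-regular) has a balanced cut beating its random cut by $\Omega(\sqrt{d(r-1)}\cdot n)=\Omega(\sqrt{d}\,n)$. This is precisely the kind of statement the graph results of Alon and of \cite{RST} provide, and since $d(r-1)\le \varepsilon' n^{2/3}$ comfortably in our regime $d\le \varepsilon n^{1/2}$, we are inside the range where those bounds apply.

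The remaining work is to control the \emph{error terms}: the higher-degree monomials $\prod_{v\in S}x_v$ with $|S|\ge 4$, and the effect of passing from an arbitrary balanced $x$ (as delivered by the graph bisection machinery) to the actual count of monochromatic hyperedges. Here I would proceed as follows. Take the $\pm 1$ vector $x$ guaranteed by the graph bound; because that vector is obtained via a spectral / SDP rounding argument, it is essentially a random rounding of a low-dimensional vector solution, so each $x_v$ behaves like a random sign and the contributions of the degree-$\ge 4$ monomials concentrate around their mean, which is $0$ for an equipartition up to $O_r(n\cdot d/n)=O_r(d)$ lower-order fluctuations — negligible compared to $\sqrt{d}\,n$. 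More carefully, I would bound $\mathbb{E}\bigl[\sum_e \sum_{S\subseteq e,|S|\ge 4}\prod_{v\in S}x_v\bigr]$ and its variance over the rounding, using that $H$ is $d$-regular so each codegree is at most $d$ and the number of edges is $dn/r$; a second-moment computation shows these cross terms are $o(\sqrt d\,n)$ with high probability. Combining, with high probability over the rounding we get a balanced assignment whose number of monochromatic hyperedges exceeds the random baseline by $\Omega(\sqrt d\,n) - o(\sqrt d\, n) = \Omega(\sqrt d\, n)$, which rearranges to the claimed bound on $\bw(H)$.

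The step I expect to be the main obstacle is making the reduction to the graph case genuinely rigorous rather than heuristic: the graph bisection theorems give a balanced partition of the \emph{weighted} graph $G$, but I need the \emph{same} partition to simultaneously control all the hypergraph error polynomials, which requires that the near-optimal vector can be taken to be a random rounding (so that concentration applies) rather than an adversarial one. Resolving this likely means not using the graph theorem as a black box but re-running its proof: set up the vector/SDP relaxation directly for the quantity $\sum_e \sum_{\{u,v\}\subseteq e} x_u x_v$ on $V(H)$, find a good vector solution from the spectral structure of $G$ (its smallest eigenvalue is $\le -\Omega(\sqrt{d(r-1)})$ by an Alon--Boppana type argument since $G$ is regular with bounded degree relative to $n$), round it with Gaussian hyperplanes, and then in the \emph{same} probabilistic space estimate both the main quadratic gain and the higher-order error terms. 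A secondary technical point is enforcing exact balance: the rounding gives an almost-balanced partition, and moving $O(\sqrt n)$ vertices to equalize the sides changes the monochromatic count by at most $O(d\sqrt n)=o(\sqrt d\, n)$ in our range, so this costs nothing essential.
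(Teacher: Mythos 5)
Your overall plan is in the right spirit and shares the key ingredient with the paper — SDP/Goemans--Williamson-style random hyperplane rounding — but the paper's actual argument avoids both of the steps that you flag as the sticking points, and I think those steps, as you describe them, contain genuine gaps.

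The main issue is your treatment of the higher-degree monomials. You propose to expand the indicator that a hyperedge is monochromatic into even-degree monomials, argue that the quadratic part is dominant, and then control the degree $\geq 4$ terms by a second-moment / concentration argument ``since each $x_v$ behaves like a random sign.'' But the problem is not fluctuation, it is bias: under hyperplane rounding with the correlated vectors one must use (adjacent vertices get a positive inner product of order $\Delta^{-1/2}$), the signs $x_v$ are not independent, and $\mathbb{E}\bigl[\prod_{v\in S}x_v\bigr]$ for $|S|\geq 4$ is not zero but of order a product of inner products — potentially comparable to the quadratic gain if not controlled. Concentration around a wrong mean does not help. The paper sidesteps the monomial decomposition entirely: its Lemma~\ref{lemma:geometry} gives matching upper \emph{and} lower bounds on $\mu(y_{v_1},\dots,y_{v_r})=\mathbb{P}(v_1,\dots,v_r$ all on the same side$)$ of the form $2^{-r}+\Theta_r\bigl(\sum_{i<j}\langle y_{v_i},y_{v_j}\rangle\bigr)$, valid as long as all pairwise inner products are nonnegative and small. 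The lower bound is applied to edges (where the inner products are at least $\Omega(\Delta^{-1/2})$) and the upper bound is applied to \emph{all} $r$-sets to control $\mathbb{E}\binom{|X|}{r}$. That single geometric lemma is precisely the tool you would need to make your ``error terms are negligible'' heuristic rigorous, and formulating and proving it is the genuine content.

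Two further points. First, the paper does not pass through a weighted graph $G$ or its eigenvalues at all: it constructs \emph{explicit} unit vectors $y_v$ directly from the hypergraph (mostly the indicator at $v$, with a small $\alpha/\sqrt{2r\Delta}$ boost at neighbouring vertices), and Lemma~\ref{lemma:geometry} then yields the gain without any Alon--Boppana input. Your plan to extract a good vector solution from the smallest eigenvalue of $G$ is closer to Alon's original argument and is a viable route, but it is a longer detour than what is done here. Second, your balance estimate is not quite right: after this kind of rounding $\Var(|X|)=\Theta(\Delta n)$, not $\Theta(n)$, because of the engineered positive correlations, so the imbalance one must correct is $\Theta(\sqrt{\Delta n})$ rather than $O(\sqrt{n})$. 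The resulting cost is $O(\Delta^{3/2}\sqrt{n})$, which is still $o(\sqrt{d}\,n)$ in the range $d\leq\varepsilon n^{1/2}$, so your conclusion survives, but the constant-tracking (and the precise threshold on $d$) comes from this corrected bound, as in the paper's Lemma~\ref{lemma:disc_maxdeg}(ii).
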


  Here, we remark that the same result is true if instead of regularity, we only assume that $H$ has average degree $d$ and maximum degree $O(d)$. Also, a bisection achieving this bound can be found with a polynomial-time randomized algorithm. However, the result no longer holds without some restriction on the maximum degree: if $G$ is the complete bipartite graph with vertex classes of size $d/2$ and $(n-d/2)$, then $G$ has average degree $d(1-o_n(1))$, but  $\bw(G)\geq \frac{1}{2}e(G)$. Furthermore, a similar result  no longer holds if $d\gg n^2$ (which also assumes $r\geq 4$). Indeed, if $H$ is the random hypergraph in which every edge is included independently with probability $p=d/\binom{n-1}{r-1}\leq 1/2$, then $\bw(H)=e(H)\cdot \left(1-\frac{1}{2^{r-1}}\right)+\Theta_r(d)$. See the next section for further discussion.

  Finally, we remark that Theorem \ref{thm:bisection} naturally extends to hypergraphs that are not necessarily uniform. If $H$ is such a hypergraph, then a random bisection has size at least $\sum_{e\in E(H)}1-2^{1-|e|}$. We show that if the maximum size of an edge is bounded by $r$, there is a bisection with significantly less edges.

  \begin{theorem}\label{thm:mixed}
Let $r\geq 2$ be an integer, then there exists $c,\varepsilon>0$ such that the following holds. Let $H$ be a  hypergraph on $n$ vertices with $m$ edges such that each edge is of size at most $r$, and the maximum degree is $\Delta$, where $\Delta^2 \leq \varepsilon mn^{1/2}$. Then 
$$\bw(H) \leq  \left(\sum_{e\in E(H)}1-2^{1-|e|}\right) - \frac{cm}{\sqrt{\Delta}}.$$
\end{theorem}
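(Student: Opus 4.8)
The plan is to mirror the argument behind Theorem~\ref{thm:bisection}: recast the bisection problem as the maximisation of a bounded-degree multilinear polynomial over balanced sign vectors, control its quadratic part by the surplus/discrepancy machinery underlying Theorem~\ref{thm:bisection}, and then show that the higher-degree terms -- which appear only once the uniformity exceeds $3$ -- are negligible. We may assume every edge of $H$ has size between $2$ and $r$. An equipartition $V(H)=A\cup B$ is encoded by $x\in\{-1,1\}^{V(H)}$ with $\big|\sum_v x_v\big|\le1$, and since $\mathds{1}[e\text{ monochromatic}]=\prod_{v\in e}\tfrac{1+x_v}{2}+\prod_{v\in e}\tfrac{1-x_v}{2}=2^{1-|e|}\sum_{S\subseteq e,\,|S|\ \mathrm{even}}\prod_{v\in S}x_v$, the size of the corresponding bisection equals
\[
\sum_{e\in E(H)}\big(1-2^{1-|e|}\big)\;-\;g(x),\qquad g(x):=\sum_{e\in E(H)}2^{1-|e|}\sum_{\substack{S\subseteq e\\ 2\le|S|\ \mathrm{even}}}\prod_{v\in S}x_v,
\]
so it suffices to produce a balanced $x$ with $g(x)\ge cm/\sqrt\Delta$. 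Write $g=g_2+g_{\ge4}$, where $g_2(x)=\sum_{\{u,v\}}w(u,v)x_ux_v$ with weights $w(u,v)=\sum_{e\supseteq\{u,v\}}2^{1-|e|}\ge0$, and $g_{\ge4}$ collects the even-degree monomials of degree $\ge4$ (so $g_{\ge4}=0$ when $r\le3$).

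\emph{The quadratic part.} Scaling the weights by $2^{r-1}$ turns $w$ into an integer multigraph $G'$ on $V(H)$ with $e(G')=\Theta_r(m)$ edges and maximum degree $\Theta_r(\Delta)$, satisfying $g_2(x)=2^{-r}x^{\mathsf T}A_{G'}x$. I would feed $G'$ into the graph case of our method -- the positive-discrepancy/surplus estimate driving Theorem~\ref{thm:bisection} -- to produce a feasible semidefinite solution: unit vectors $(u_v)_{v\in V(H)}$ with $\sum_{\{u,v\}}w(u,v)\langle u_u,u_v\rangle=\Omega_r(m/\sqrt\Delta)$, with $\big\|\sum_v u_v\big\|$ small (the balance constraint), and, after tuning the construction, with all pairwise inner products of absolute value at most $\eta\Delta^{-1/2}$ for an arbitrarily small constant $\eta=\eta(r)>0$. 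This is the step where the hypothesis relating $\Delta$, $m$ and $n$ enters: it is precisely the range in which the $\sqrt\Delta$-surplus is available, and in the $r$-uniform $d$-regular case it is the analogue of the condition $d\le\varepsilon n^{1/2}$ in Theorem~\ref{thm:bisection}.

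\emph{Rounding and the higher-degree terms.} Round by a random hyperplane, $x_v=\operatorname{sign}\langle\xi,u_v\rangle$ with $\xi$ a standard Gaussian. By Grothendieck's identity $\mathbb E[x_ux_v]=\tfrac2\pi\arcsin\langle u_u,u_v\rangle$, so $\mathbb E[g_2(x)]\ge\tfrac2\pi\sum w(u,v)\langle u_u,u_v\rangle-O_r(\eta^2 m/\Delta)=\Omega_r(m/\sqrt\Delta)$ for $\eta$ small, while $\sum_v|\mathbb E[x_v]|$ remains small. The new ingredient is the control of $g_{\ge4}$: a Gaussian-orthant (Hermite/Wick) estimate gives, for $S\subseteq e$ with $|S|=2t\ge4$, that $\big|\mathbb E[\prod_{v\in S}x_v]\big|\le O_r(1)\cdot(\eta\Delta^{-1/2})^{t}$, and summing the $O_r(1)$ such sets per edge over all $m$ edges yields $\big|\mathbb E[g_{\ge4}(x)]\big|\le C_r\eta^2 m/\Delta$; choosing $\eta$ small makes this at most $\tfrac12\mathbb E[g_2(x)]$ (the two are of the same order when $\Delta=O(1)$, and $\mathbb E[g_{\ge4}]$ is of strictly smaller order as $\Delta\to\infty$). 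Hence $\mathbb E[g(x)]=\Omega_r(m/\sqrt\Delta)$, so some sign vector attains $g(x)\ge cm/\sqrt\Delta$; this witness is found, and the procedure derandomised, in polynomial time. Finally $x$ is only near-balanced, and one restores an equipartition exactly as in the proof of Theorem~\ref{thm:bisection} -- by conditioning the rounding to remain balanced, or by moving $O_r(m/\Delta^{3/2})$ vertices of smallest weighted degree across the partition, which perturbs $g$ by $o_r(m/\sqrt\Delta)$ thanks to the bound on $\Delta$. This gives the claimed inequality.

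\emph{Main obstacle.} Everything through $g_2$ is essentially the graph statement -- which is why $3$-uniform hypergraphs reduce to multigraphs, as noted in the introduction. The genuinely new difficulty, and the crux of the argument, is the last step: one must estimate the higher even-degree terms $g_{\ge4}$ sharply enough to guarantee they cannot cancel the $\Theta_r(m/\sqrt\Delta)$ surplus. This needs both the uniform orthant bounds $\big|\mathbb E[\prod_{v\in S}x_v]\big|=O_r((\eta\Delta^{-1/2})^{|S|/2})$ and a rounding that simultaneously realises the quadratic surplus and keeps the partition balanced; reconciling these two requirements (in particular, keeping the inner products below a small absolute constant times $\Delta^{-1/2}$ without destroying either the surplus or the balance) is where the real work lies.
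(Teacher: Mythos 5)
Your general framework—assign vectors to vertices, round by a random hyperplane, and argue the expected cut is small—matches the paper's proof, which also proceeds via a modification of Lemma~\ref{lemma:disc_maxdeg} to arbitrary edge sizes. However, there is a genuine gap in the way you propose to control the higher-degree part $g_{\geq 4}$, and it concerns precisely the requirement you flag as ``the real work.''

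Your Wick/orthant bound $\big|\mathbb E[\prod_{v\in S}x_v]\big|\le O_r\big((\eta\Delta^{-1/2})^{|S|/2}\big)$ requires \emph{all} pairwise inner products in $S$ to be at most $\eta\Delta^{-1/2}$, and your plan asserts one can ``tune the construction'' to make this hold with $\eta$ an arbitrarily small constant while keeping the quadratic surplus at $\Omega_r(m/\sqrt{\Delta})$. This is not achievable, and it is not what the paper's construction gives. In the construction underlying Lemma~\ref{lemma:disc_maxdeg} one takes $x_v(u)=\gamma$ on the $O_r(\Delta)$ neighbours $u$ of $v$ with $\gamma=\Theta_r(\alpha/\sqrt\Delta)$ for a fixed constant $\alpha$, and then for a pair $u,v$ sharing many common neighbours one has $\langle y_u,y_v\rangle= 2\gamma+\Theta\big(\Delta\gamma^2\big)=\Theta_r(\alpha^2)$, i.e.\ a \emph{constant}, not $O(\Delta^{-1/2})$. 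Pushing $\gamma$ down far enough to make the co-degree term $\Delta\gamma^2$ of order $\Delta^{-1/2}$ forces $\gamma=O(\Delta^{-3/4})$, which lowers the surplus from $m\gamma=\Theta(m/\sqrt\Delta)$ to $O(m\Delta^{-3/4})$ and destroys the theorem. Conversely, with a constant $\alpha$, your bound gives $\big|\mathbb E[g_{\geq4}(x)]\big| = O_r(\alpha^4 m)$, which is \emph{not} $o_r(\alpha m/\sqrt\Delta)$ once $\Delta\to\infty$; the sign-insensitive Wick bound alone cannot rule out cancellation.

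The paper sidesteps this entirely. Lemma~\ref{lemma:geometry} gives a direct two-sided estimate on the orthant probability $\mu(y_{v_1},\dots,y_{v_r})$ under the weaker hypothesis that all pairwise inner products lie in $[0,\alpha_0]$ for a fixed constant $\alpha_0$ — nonnegativity, not smallness, is what is exploited. The lower bound of that lemma is applied only to actual edges, yielding $\mathbb P(e\subseteq X)\ge 2^{-|e|}+c_1\sum_{\{u,v\}\subseteq e}\langle y_u,y_v\rangle$ and hence $\mathbb E[g(x)]\ge\Omega_r(\alpha m/\sqrt\Delta)$ with no residual error term, and no Walsh decomposition is needed. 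If you want to salvage your route, the missing ingredient is a sign statement: when all pairwise correlations in $S$ are nonnegative, $\mathbb E[\prod_{v\in S}x_v]\ge 0$ (its leading term is $\Theta\big(\sum_{\text{pairings}}\rho_{ij}\rho_{kl}\big)\ge 0$), so $g_{\geq4}$ reinforces rather than threatens the quadratic surplus. That positivity is exactly what the geometric proof of Lemma~\ref{lemma:geometry} encodes, but your proposal, relying only on a magnitude bound, does not establish it. The balance step and the role of the hypothesis $\Delta^2\sqrt n\lesssim m$ are treated correctly and mirror the paper.
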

Note that this theorem immediately implies Theorem \ref{thm:bisection} by having $\Delta=d$.

\subsection{Discrepancy}

Let $H$ be an $r$-uniform hypergraph with $n$ vertices and edge density $p=\frac{|E(H)|}{\binom{n}{r}}$. Given $U\subset V(H)$, define the discrepancy of $U$ as 
$$\disc(U)=e(U)-p\binom{|U|}{r}.$$
Then $\disc(U)$ measures how much $U$ deviates from its expected size. The \emph{discrepancy} of $H$ is defined as the maximum absolute discrepancy over all subsets of vertices, that is,
$$\disc(H)=\max_{U\subset V(H)} |\disc(U)|.$$
 This notion of discrepancy was introduced by Erd\H{o}s, Goldbach, Pach and Spencer \cite{EGPS} in the 80's, extending earlier notions studied by Erd\H{o}s and Spencer \cite{ES71}. In \cite{EGPS}, it is proved that if $G$ is a graph on $n$ vertices and its edge density $p$ satisfies $\frac{1}{n}\leq p\leq \frac{1}{2}$, then $\disc(G)=\Omega(p^{1/2}n^{3/2})$, and equality is a achieved by the Erd\H{o}s-R\'enyi random graph $G_{n,p}$. When $p<1/n$, it is not difficult to show that the right lower bound is $\Omega(pn^2)$. Noting that the discrepancy of a hypergraph is equal to the discrepancy of its complement, these provide sharp bounds in case $\frac{1}{2}\leq p\leq 1$ as well. Bollob\'as and Scott \cite{BS06} extended this result to $r$-uniform hypergraphs in case $p$ is not too small. More precisely, they proved that if $\frac{1}{n}\leq p\leq \frac{1}{2}$, and $H$ is an $n$ vertex $r$-uniform hypergraph of edge density $p$, then $\disc(H)=\Omega_r(p^{1/2}n^{\frac{r+1}{2}})$. Again, equality is achieved by random hypergraphs. Here, we prove that the same bound holds for the whole range of interest $n^{-(r-1)}\ll p\leq \frac{1}{2}$

\begin{theorem}\label{thm:main_disc}
Let $H$ be an $r$-uniform hypergraph on $n$ vertices of average degree $d$, where $1\leq d\leq\frac{1}{2}\binom{n-1}{r-1}$. Then $$\disc(H)=\Omega_r(\sqrt{d}n).$$ 
\end{theorem}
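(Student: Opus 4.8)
The plan is to reduce the discrepancy lower bound to a statement about a well-chosen random vertex subset and to control the variance via the structure of the hypergraph. Let $p = d/\binom{n-1}{r-1}$ be the edge density (up to the usual $\binom{n}{r}$ vs $\binom{n-1}{r-1}$ normalization), and note that by passing to the complement we may assume $p \le 1/2$, so $d \le \frac12\binom{n-1}{r-1}$ is exactly the hypothesis. First I would separate two regimes. When $d$ is very small, say $d = O(1)$ relative to what a random subset can detect — more precisely when $pn^{r-1}$ is bounded below — the trivial bound already gives what we want: take $U$ to be a single edge together with a few extra vertices, so $\disc(U) \ge 1 - p\binom{r}{r} = 1 - o(1) = \Omega(1) = \Omega(\sqrt{d}n)$ when $d = \Theta(1/n^2)$ is the extreme, and interpolate. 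So the substantive range is $p$ not too small, and there the plan is a second-moment / random-restriction argument.

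The core step: choose $U$ by including each vertex of $V(H)$ independently with probability $1/2$. Then $\mathbb{E}[e(U)] = p'\binom{n}{r}$ where $p' = |E(H)|/\binom{n}{r}$, and I want to show $\Var(e(U))$ is large, which forces $|e(U) - \mathbb{E}[e(U)]|$ to be $\Omega(\sqrt{\Var})$ with constant probability, hence $\disc(U)$ is that large for some choice of $U$ (after correcting for the fact that $|U|$ fluctuates around $n/2$ — this is a lower-order adjustment since $\disc$ is measured against $p\binom{|U|}{r}$, and $|U| = n/2 + O(\sqrt{n})$ with high probability, contributing an error term that I would absorb). Writing $e(U) = \sum_{e \in E(H)} \mathbf{1}[e \subset U]$, the variance is $\sum_{e,f} (\Pr[e \cup f \subset U] - \Pr[e\subset U]\Pr[f \subset U]) = \sum_{e,f} (2^{-|e\cup f|} - 2^{-|e|-|f|})$, and the diagonal terms $e = f$ alone contribute $\sum_e (2^{-r} - 2^{-2r}) = \Omega_r(|E(H)|) = \Omega_r(dn/r)$. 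The off-diagonal terms are nonnegative (since $|e \cup f| \le |e| + |f|$ always), so $\Var(e(U)) \ge \Omega_r(dn)$, and therefore $\disc(H) \ge \Omega_r(\sqrt{dn})$.

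Wait — that gives only $\sqrt{dn}$, not $\sqrt{d}\,n$, so the naive random subset is too weak, and this is exactly where the main obstacle lies. The fix, following the Bollob\'as–Scott philosophy, is that one should not expose all $n$ vertices at once but rather exploit that a typical vertex lies in roughly $d$ edges: the right move is to build $U$ greedily or to use a more clever distribution that correlates the edge-indicators positively in a controlled way, boosting the variance by a factor of $n$. Concretely, I would instead pick a uniformly random linear order of the vertices and a random threshold, or equivalently condition on a random "level set", so that the events $\{e \subset U\}$ for edges $e$ sharing vertices become strongly positively correlated; alternatively, iterate the simple bound by partitioning $V(H)$ into $\Theta(n/r)$ blocks and applying the variance computation within a random union of blocks so the effective number of independent coordinates drops to $\Theta(n/d)$ while each contributes variance $\Theta(d^2)$. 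Either route requires care that the maximum degree does not ruin the positive correlation, but since we only need a lower bound on discrepancy (not an upper bound) and the off-diagonal correlation terms are all nonnegative, no degree hypothesis beyond $d \le \frac12\binom{n-1}{r-1}$ should be needed. The hard part is thus choosing the randomized subset so that the between-edge correlations are harnessed to gain the extra factor of $n^{1/2}$ in the second moment; once the right distribution is in place, the rest is the routine Paley–Zygmund / Chebyshev extraction of a good $U$ together with the lower-order correction for $||U| - n/2|$.
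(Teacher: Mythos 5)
You have correctly identified the key obstacle: the vanilla second-moment computation with a Bernoulli$(1/2)$ random subset only yields $\Var(e(U)) = \Omega_r(dn)$ and hence $\disc(H) = \Omega_r(\sqrt{dn})$, which is short of the target $\Omega_r(\sqrt{d}\,n)$ by a factor of $\sqrt{n}$. But the proposal does not actually close this gap. The fixes you sketch --- random linear orders, level sets, partitioning into $\Theta(n/r)$ blocks --- are stated at the level of slogans, and none of them is developed far enough to check that the between-edge correlations can really be boosted by a factor of $n$; indeed it is far from clear that any such single-shot randomized construction can work, since for the random $r$-uniform hypergraph of density $p$ the correct answer $\Theta(\sqrt{d}\,n)$ is realized only by sets $U$ that are highly non-random relative to $H$. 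Your treatment of the small-$d$ regime is also off: the theorem hypothesis is $d\geq 1$, and at $d=1$ you need $\disc(H)=\Omega(n)$, whereas the ``take a single edge'' argument only produces $\Omega(1)$. So the ``trivial'' regime is in fact not trivial at all.

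The paper takes a genuinely different route that sidesteps the second-moment barrier entirely. The idea is to project $H$ to lower uniformities: for each $t\in\{2,\dots,r\}$, form the $t$-uniform \emph{multi}-hypergraph $H_t$ in which the multiplicity of a $t$-set $f$ is the number of edges of $H$ containing $f$. The densities $p_t$ of these projections increase geometrically (by a factor of roughly $n$ per step), so there is always some $t$ with $\tfrac{1}{2n}\leq p_t\leq\tfrac{1}{2}$. One then invokes a multi-hypergraph version of the Bollob\'as--Scott theorem $\disc^{+}(H_t)\cdot\disc^{-}(H_t)\geq\Omega(p_t(1-p_t)n^{t+1})$ --- note this gives $n^{(t+1)/2}$ in $n$, not $n^{1/2}$, which is precisely the gain your second-moment approach cannot produce --- and deduces $\disc(H_t)=\Omega_r(\sqrt{d}\,n)$. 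Finally, one converts this back to a bound on $\disc(H)$ by expanding $\disc_{H_t}(U)$ as a signed sum $\sum_{j=t}^{r}\binom{j}{t}\disc_{j,r-j}(H;U,U^c)$, applying the triangle inequality to locate a single large term $\disc_{j,r-j}(H;U,U^c)$, and using a Chebyshev-type polynomial inequality (Lemma~\ref{lem:BollobasScott lemma}) to show that every such mixed discrepancy is $O_r(\disc(H))$. This reduction to a density-balanced projection plus the product theorem is the missing ingredient, and there is nothing in your sketch that substitutes for it.
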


 If $d<1$, it is easy to argue that the minimum discrepancy is $\Omega_r(dn)=\Omega_r(pn^{r})$. Indeed, in this case $H$ contains an independent set $U$ of size $\Omega(n)$, and $|\disc(U)|=p\binom{|U|}{r}=\Omega_r(pn^r)$.
 
 While the discrepancy of a hypergraph measures the maximum absolute deviation of the size of an induced subhypergraph compared to its expected size, it is also natural to consider whether this deviation is positive or negative. The \emph{positive discrepancy} of $H$ is defined as
$$\disc^{+}(H)=\max_{U\subset V(H)}\disc(U),$$ and the \emph{negative discrepancy} of $H$ is
$$\disc^{-}(H)=\max_{U\subset V(H)}-\disc(U).$$
Note that one trivially has $\disc^{+}(G) = \disc^{-}(G^c)$ and $\disc(G) = \max(\disc^{+}(G), \disc^{-}(G))$.

We observe that the positive discrepancy and the bisection width of a hypergraph are closely connected in a sense that small bisection width implies large positive discrepancy. 

\begin{lemma}\label{lemma:relating pos disc and bandwidth}
Let $H$ be an $r$-uniform hypergraph on $n$ vertices with average degree $d$. Let $s(H)$ be such that
$$\bw(H) = \frac{nd}{r}\left(1-2^{1-r}\right) - s(H).$$
Then
$$\disc^{+}(H) \geq \frac{s(H)}{2}.$$
\end{lemma}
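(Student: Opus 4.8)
The plan is to run the argument on an optimal bisection and translate the hypothesis into a statement about edges lying inside the parts. Fix an equipartition $V(H)=A\sqcup B$ that attains $\bw(H)$. An edge is counted by this bisection exactly when it is not contained in a single part, so $\bw(H)=e(H)-e(A)-e(B)$, where $e(A),e(B)$ denote the numbers of edges lying entirely inside $A$ and inside $B$. Since $H$ has average degree $d$ we have $e(H)=nd/r$, so the defining relation $\bw(H)=\frac{nd}{r}(1-2^{1-r})-s(H)$ rearranges to
$$e(A)+e(B)=e(H)-\bw(H)=2^{1-r}e(H)+s(H).$$

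Next, since $A,B\subseteq V(H)$ we have $\disc^{+}(H)\ge\max\{\disc(A),\disc(B)\}\ge\tfrac12(\disc(A)+\disc(B))$. Writing $p=e(H)/\binom{n}{r}$ for the edge density and using the displayed identity,
$$\disc(A)+\disc(B)=\bigl(e(A)+e(B)\bigr)-p\left(\binom{|A|}{r}+\binom{|B|}{r}\right)=2^{1-r}e(H)+s(H)-p\left(\binom{|A|}{r}+\binom{|B|}{r}\right).$$
Hence it suffices to show $p\bigl(\binom{|A|}{r}+\binom{|B|}{r}\bigr)\le 2^{1-r}e(H)$, which (dividing by $e(H)=p\binom{n}{r}$) is equivalent to the purely combinatorial inequality
$$\binom{|A|}{r}+\binom{|B|}{r}\le 2^{1-r}\binom{n}{r}.$$

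This inequality is the crux of the proof; everything else is bookkeeping. It says that in a balanced $2$-colouring of an $n$-set a uniformly random $r$-subset is monochromatic with probability at most $2^{1-r}$. I would prove it from the product formula for binomial ratios: assume without loss of generality $|A|=\lceil n/2\rceil$ and $|B|=\lfloor n/2\rfloor$, and check that $\binom{|A|}{r}/\binom{n}{r}\le\tfrac{|A|}{n}\,2^{1-r}$ and $\binom{|B|}{r}/\binom{n}{r}\le\tfrac{|B|}{n}\,2^{1-r}$. Indeed, if the left-hand binomial coefficient is $0$ this is trivial, and otherwise it follows from $\binom{|A|}{r}/\binom{n}{r}=\frac{|A|}{n}\prod_{i=1}^{r-1}\frac{|A|-i}{n-i}$ together with the fact that each factor satisfies $\frac{|A|-i}{n-i}\le\tfrac12$ for $i\ge 1$, because $2|A|-n\le 1\le i$; the bound for $|B|$ is even easier since $2|B|-n\le 0$. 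Adding the two estimates and using $|A|+|B|=n$ gives the claim. The only mildly delicate point is the parity accounting: when $n$ is even the leading factors $|A|/n$ and $|B|/n$ both equal $\tfrac12$, while when $n$ is odd the excess $\tfrac1{2n}$ on the $A$-side is exactly cancelled by the deficit on the $B$-side — I expect this to be the main (small) obstacle.

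Combining the pieces, $\disc(A)+\disc(B)\ge 2^{1-r}e(H)+s(H)-2^{1-r}e(H)=s(H)$, and therefore $\disc^{+}(H)\ge s(H)/2$. Degenerate cases need no separate treatment: if $n<r$ then $e(H)=0$ and the statement is vacuous, and if $s(H)\le 0$ it is immediate since $\disc^{+}(H)\ge\disc(\emptyset)=0$.
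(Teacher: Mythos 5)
Your proof is correct and follows essentially the same route as the paper: take an optimal equipartition, express $\disc(A)+\disc(B)$ via $e(A)+e(B)=e(H)-\bw(H)$, and invoke the binomial inequality $\binom{|A|}{r}+\binom{|B|}{r}\le 2^{1-r}\binom{n}{r}$. The only difference is cosmetic: the paper states this binomial inequality without proof, whereas you supply a clean proof of it via the factorization $\binom{|A|}{r}/\binom{n}{r}=\frac{|A|}{n}\prod_{i=1}^{r-1}\frac{|A|-i}{n-i}$ and the bound $\frac{|A|-i}{n-i}\le\frac12$ for $i\ge 1$.
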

\begin{proof}
Let $V(H) = X \cup Y$ be an equipartition such that the size of the corresponding bisection is $\bw(H)$. Note that for all $n \geq r$ we have
$$\binom{\lfloor\frac{n}{2}\rfloor}{r} + \binom{\lceil \frac{n}{2} \rceil}{r} \leq 2^{1-r}\binom{n}{r}.$$
Hence, it follows that 
$$\disc(X) + \disc(Y) = e(X) + e(Y) - p\left(\binom{\lfloor\frac{n}{2}\rfloor}{r} + \binom{\lceil \frac{n}{2} \rceil}{r}\right) \geq e(X) + e(Y) - 2^{1-r} \cdot p \binom{n}{r}.$$
Since $e(X) + e(Y) = e(H) - \bw(H)$, we get
$$\disc(X) + \disc(Y) \geq s(H).$$ 
In particular, we conclude that $\disc^{+}(H) \geq \frac{s(H)}{2}$.
\end{proof}

 In the case of graphs, exploring the connection between the bisection width  and positive discrepancy, R\"aty, Sudakov, and Tomon \cite{RST} proved sharp bounds on both of these quantities. However, in the case of hypergraphs, less is known. Bollob\'as and Scott \cite{BS06} proved that if $H$ is an $r$-uniform hypergraph with density $p$ satisfying $p(1-p) \geq \frac{1}{n}$, then $\disc^{+}(H)\cdot \disc^{-}(H) = \Omega_r(p(1-p)n^{r+1})$. Unfortunately, this inequality does not provide any information on $\disc^{+}(H)$ and $\disc^{-}(H)$ individually, beyond that both are at least $\Omega_r(n)$.

On the other hand, Bollob\'as and  Scott \cite{BS06} proved that for $d \geq 1$, the random $r$-uniform hypergraph $H$, where every edge is included independently with probability $p = d/\binom{n-1}{r-1}$ satisfies $\disc(H) = O(\sqrt{d}n)$ with high probability, which implies $\disc^{+}(H),\disc^{-}(H)=O(\sqrt{d}n)$ as well. Thus, by Lemma \ref{lemma:relating pos disc and bandwidth},  we also have $\bw(H) \geq (1- 2^{1-r})e(H) - O(\sqrt{d}n)$, confirming our claim in the previous section. Our main result concerning the positive discrepancy is the following lower bound for sufficiently sparse hypergraphs. We prove that whenever the average-degree $d$ is at most $n^{2/3}$, the positive discrepancy is bounded below by $\Omega(\sqrt{d}n)$, which is sharp by the aforementioned result.

\begin{theorem}\label{thm:main_pos_disc}
Let $H$ be an $r$-uniform hypergraph on $n$ vertices of average degree $d<n^{2/3}$. Then
$$\disc^{+}(H)=\Omega_r(d^{1/2}n).$$
\end{theorem}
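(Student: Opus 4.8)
The plan is to translate the statement into a spectral rounding problem for the centered adjacency operator. For $r=2$, writing $A$ for the adjacency matrix and $p=d/(n-1)$, every $U\subseteq V(H)$ satisfies $e(U)-p\binom{|U|}{2}=\tfrac12\langle\mathbf{1}_U,(A-pJ)\mathbf{1}_U\rangle+\tfrac12 p|U|$, so that $\disc^{+}(H)\ge\tfrac12\max_{x\in\{0,1\}^n}\langle x,(A-pJ)x\rangle$; for general $r$ the same holds with the centered adjacency tensor of $H$. It therefore suffices to show this quadratic (resp.\ multilinear) form is $\Omega_r(\sqrt d\,n)$. The argument is cleanest when $H$ is nearly regular, $\Delta(H)=O_r(d)$ (the general average‑degree case is handled similarly, high‑degree vertices together with appropriate neighbourhoods producing dense sets of their own that can be aggregated exactly as in the hard case below). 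Moreover, if $d\le\varepsilon n^{1/2}$ we are already done: Theorem~\ref{thm:bisection} gives $\bw(H)\le e(H)(1-2^{1-r})-c\sqrt d\,n$, and feeding this into Lemma~\ref{lemma:relating pos disc and bandwidth} (using $e(H)=\tfrac{nd}{r}$) yields $\disc^{+}(H)\ge\tfrac c2\sqrt d\,n$. So from now on assume $\varepsilon n^{1/2}\le d<n^{2/3}$.

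In this range I would invoke the Alon--Boppana type lower bound $\lambda_2\ge c_0\sqrt d$ for the (centered) adjacency operator of a nearly regular hypergraph with $d=o(n)$ --- a bound that follows from the trace/universal‑cover method and is in any case among the eigenvalue estimates this paper establishes. For $r=2$ this supplies a unit vector $v$ with $v\perp\mathbf{1}$ and $(A-pJ)v=\mu v$ for some $\mu\ge c_0\sqrt d$ (so that $\langle\mathbf{1},(A-pJ)v\rangle=0$); for general $r$ it supplies the analogous nontrivial unit symmetric tensor. The proof then splits according to how spread out this witness is.

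If $v$ is delocalized, say $\|v\|_\infty\le Cn^{-1/2}$, I round randomly: let $x\in\{0,1\}^n$ have independent coordinates with $\Pr[x_i=1]=\tfrac12+\beta v_i$, where $\beta=\tfrac{\sqrt n}{2C}$ (a valid probability by the bound on $\|v\|_\infty$). A direct expansion gives
\[
\mathbb{E}\big[\langle x,(A-pJ)x\rangle\big]=\tfrac14\langle\mathbf{1},(A-pJ)\mathbf{1}\rangle+\beta\langle\mathbf{1},(A-pJ)v\rangle+\beta^{2}\langle v,(A-pJ)v\rangle+O(pn),
\]
and since $\langle\mathbf{1},(A-pJ)v\rangle=0$, $\langle\mathbf{1},(A-pJ)\mathbf{1}\rangle=-\tfrac{2e(H)}{n-1}=O(d)$, and $pn=O(d)=o(\sqrt d\,n)$ (as $d<n^{2}$), this equals $\beta^{2}\mu-O(d)=\Omega_r(\sqrt d\,n)$. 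Hence some outcome $x$ witnesses $\disc^{+}(H)=\Omega_r(\sqrt d\,n)$; the $r$‑uniform case is identical with a multilinear rounding of the tensor witness.

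The real work --- and what I expect to be the main obstacle --- is the case where no near‑optimal spectral witness can be delocalized, i.e.\ the whole ``$\ge c_0\sqrt d$'' part of the spectrum is, up to lower order, spanned by vectors supported on few coordinates. Then the top of the spectrum is too lossy after rounding and one must instead extract combinatorial structure: localization of all such witnesses, together with the near‑regularity of $H$, should force $H$ to contain a family of almost disjoint clique‑like clusters $W_1,\dots,W_t$, each of size $s=\Theta_r(d^{1/(r-1)})$ and density bounded away from $0$, hence with surplus $e(W_i)-p\binom{|W_i|}{r}=\Theta_r(s^{r})=\Theta_r(d^{r/(r-1)})$. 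Taking $U=\bigcup_{i\le t}W_i$, almost‑disjointness yields $\disc(U)\gtrsim_r t\,d^{r/(r-1)}-p\binom{|U|}{r}$ with $|U|\asymp ts$; choosing $t$ appropriately --- the clusters being numerous by a cleaning/iteration argument that removes clusters until none remain or enough have been collected, taking care in the ``few clusters'' branch that $ts\ll n/\sqrt d$ so that deleting them and appealing to the delocalized case on the remainder costs only $o(\sqrt d\,n)$ --- makes the right‑hand side $\Omega_r(\sqrt d\,n)$. The hypothesis $d<n^{2/3}$ enters precisely here: for $r=2$ it is exactly the range in which this aggregation balances out to $\sqrt d\,n$ (above it even a single cluster has surplus $\Omega(d^{2})\ge\Omega(\sqrt d\,n)$, and beyond the threshold the bound is genuinely false, witnessed by pseudorandom/design‑like hypergraphs), and for larger $r$ it is the cleanest uniform threshold for which the clustering step closes. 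Making ``every spectral witness is localized $\Rightarrow$ $H$ contains enough almost disjoint clique‑like clusters'' quantitatively precise, and interpolating smoothly between the delocalized and localized regimes (a single witness being typically a mixture of a flat bulk and a few spikes), is where essentially all of the difficulty lies.
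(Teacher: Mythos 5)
Your proposal diverges substantially from the paper's proof and contains a critical gap that I do not think can be repaired along the lines you describe.

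\textbf{Circularity.} In the range $\varepsilon n^{1/2}\le d<n^{2/3}$ you propose to start from an Alon--Boppana-type bound $\lambda_2\ge c_0\sqrt d$ for the centered adjacency tensor of a near-regular $r$-uniform hypergraph, remarking that it ``follows from the trace/universal-cover method and is in any case among the eigenvalue estimates this paper establishes.'' Neither claim holds. The trace/universal-cover method, which for graphs produces $2\sqrt{d-1}$ via the spectral radius of the $d$-regular tree, produces for hypergraphs the spectral radius of the $d$-regular hypertree, which is $\frac{r}{r-1}((r-1)(d-1))^{1/r}=\Theta_r(d^{1/r})$; this is exactly the Li--Mohar bound that the paper \emph{improves upon}, and $d^{1/r}\ll\sqrt d$ for $r\ge 3$. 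And within the paper the bound $\lambda_2^{(r)}(H)\ge c\sqrt d$ (\Cref{thm:2nd eigenvalue}(i)) is \emph{derived from} \Cref{thm:main_pos_disc} via the short lemma relating $\lambda_2^{(p)}$ to $\disc^+$; taking that eigenvalue bound as an input to prove \Cref{thm:main_pos_disc} would be circular. Moreover for $r\ge 3$ the object ``a unit vector $v\perp\mathbf 1$ with $(A-pJ)v=\mu v$'' does not exist: the adjacency operator is a multilinear form, not a matrix, and even a tensor-eigenvector normalization would not give you the clean bilinear expansion you use in the delocalized rounding step.

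\textbf{The hard case is left unresolved.} You candidly describe the ``all witnesses localized'' branch --- where you would extract almost-disjoint dense cliquish clusters of size $\Theta_r(d^{1/(r-1)})$ and aggregate --- as ``where essentially all of the difficulty lies,'' and you do not give the cleaning/iteration argument or verify the parameter balance; as written this is a plan, not a proof. By contrast, the paper avoids the spectral detour entirely. Its key lemma (\Cref{lemma:disc_maxdeg}) uses the Goemans--Williamson-style construction: assign to each vertex $v$ a vector $x_v$ which is $1$ on $v$, roughly $\alpha/\sqrt{r\Delta}$ on the neighbours of $v$, and $0$ elsewhere; normalize; cut by a uniformly random linear hyperplane; and control the probability that an $r$-tuple lands on one side via the geometric lemma $\mu(v_1,\dots,v_r)=\frac1{2^r}+\Theta_r(\sum_{i<j}\langle v_i,v_j\rangle)$ (\Cref{lemma:geometry}). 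This needs no eigenvector, no delocalization/localization dichotomy, and works uniformly for $\Delta\le n^{2/3}$; the general (average-degree) case is then reduced to the bounded-maximum-degree case by a separate argument (\Cref{lemma:maxdeg}) that shows high-degree vertices either already force large positive discrepancy or can be discarded. Your initial reduction to $d\ge\varepsilon n^{1/2}$ via \Cref{thm:bisection} and \Cref{lemma:relating pos disc and bandwidth} is fine as far as it goes, but note the paper proves \Cref{thm:bisection} itself from the same \Cref{lemma:disc_maxdeg}, so in effect your easy regime is a corollary of the machinery your hard regime lacks.
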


\subsection{Eigenvalues}

 Given a $d$-regular graph $G$, let $A$ be the adjacency matrix of $G$ and let  $d=\lambda_1\geq \dots\geq \lambda_n$ be the eigenvalues of $A$. The Alon-Boppana theorem \cite{alon-boppana} is one of the central results in spectral graph theory, stating that the second largest eigenvalue satisfies $$\lambda_2\geq 2\sqrt{d-1}-o_n(1).$$ This is known to be tight for infinite values of $d$ due to the existence of so called Ramanujan graphs \cite{ramanujan}. Furthermore, as a celebrated result of Friedman \cite{Friedman} shows, $\lambda_2=2\sqrt{d-1}+o_n(1)$ for random $d$-regular graphs as well. Here, it is good to point out that $2\sqrt{d-1}$ also coincides with the spectral radius of the $d$-regular infinite tree. This might not be unexpected, as random $d$-regular graphs are locally tree-like. However, the exact connection between these two quantities might be more subtle, as we discuss later in this section. 


There has been a plethora of work concerning the spectral theory of hypergraphs under various frameworks \cite{B21,C93,CD,FL,FW,HQ,KLM,LM}. We now briefly discuss the main directions and some of the highlights of these works. Some authors \cite{B21,C93} define the adjacency matrix of a hypergraph $H$ as the matrix $A$ whose entry $A_{i,j}$ is the co-degree of the distinct vertices $i$ and $j$. In contrast, the rest of the literature considers the so called adjacency tensor. A tensor naturally corresponds to a multilinear map, so we rather define this map directly instead. We follow the notation of Li and Mohar \cite{LM}, which coincides with the notation of other sources up to constant factors depending on the uniformity.

\begin{definition}
    Given an $r$-uniform hypergraph $H$ on vertex set $V$, its \emph{adjacency map} $\tau_H:(\mathbb{C}^{V})^r\rightarrow \mathbb{R}$ is the symmetric multilinear function defined as follows: for every $x_1,\dots,x_r\in \mathbb{C}^V$,  
    $$\tau_H(x_1,\dots,x_r) = \frac{1}{(r-1)!}\sum_{\substack{v_1,\dots,v_r\in V\\ \{v_1,\dots,v_r\}\in E(H)}} x_1(v_1)\dots x_r(v_r).$$
    We define the normalized adjacency map of $H$ as $$\sigma_H = \tau_H - \frac{r|E(H)|}{n^r}J,$$ where $J$ is the ''all-one'' tensor defined as $J(x_1,\dots,x_r)=\sum_{v_1,\dots,v_r\in V} x_1(v_1)\dots x_r(v_r)$.
\end{definition}

Note that in case $G$ is a graph with adjacency matrix $A$, then $\tau_G(x,y)=x^T A y$. Also, if $\mathds{1}$ is the all-one vector, then $\sigma_H(\mathds{1},\dots,\mathds{1})=0$. We now define two sets of quantities that are potential candidates for the second largest eigenvalue of $r$-uniform hypergraphs.
\begin{definition}
    For $p>0$, the $L^p$-norm of a vector $x\in \mathbb{C}^n$ is defined as $$||x||_p=(|x(1)|^p+\dots+|x(n)|^p)^{1/p}.$$ Given an $r$-uniform hypergraph $H$, let
    $$\lambda_{2}^{(p)}(H) = \sup_{x\in \mathbb{R}^V,||x||_p= 1} \sigma_H(x,\dots,x),$$
    and
    $$\mu^{(p)}(H) = \sup_{||x_1||_p=1,||x_2||_p=1,\dots,||x_r||_p=1} |\sigma_H(x_1,\dots,x_r)|.$$
\end{definition}
Note that in case $r = 2$, $\lambda_2(H)=\lambda_2^{(2)}(H)$ and $\mu^{(2)}(H) = \max\left(|\lambda_2(H)|, \vert\lambda_n(H)\vert\right)$. Thus, in a sense, $\lambda_2^{(p)}$ is more related to the second largest eigenvalue of $H$, while $\mu^{(p)}$ is related to the second largest absolute value of the eigenvalues. Clearly, $\mu^{(p)}(H)\geq \lambda_2^{(p)}(H)$, but the gap between these quantities can be arbitrarily large: when $G$ is the complete balanced bipartite graph on $n$ vertices, $\lambda_2(G)=0$, while $\mu^{(2)}(G)=n/2$. Also, while $\mu^{(2)}(G)\geq \Omega(\sqrt{d})$ holds for every $d$-regular graph $G$ on $n\geq 2d$ vertices, and this bound is sharp, the minimum of $\lambda_2^{(2)}(G)$ among $d$-regular graphs has a much stranger behavior, see the recent manuscript \cite{RST}. In the case of $r$-uniform hypergraphs for $r\geq 3$, typically $\mu^{(2)}(H)$ or $\mu^{(r)}(H)$ is studied as the second eigenvalue in the literature, but we propose $\lambda_2^{(p)}(H)$ as a stronger alternative. 

A hypergraph $H$ is said to be \emph{$k$-co-degree regular} if for distinct $v_1,\dots,v_{r-1}$, the number of edges containing $\left\{v_1,\dots,v_{r-1}\right\}$ is $k$. Friedman and Wigderson \cite{FW}  studied the quantity $\mu^{(2)}(H)$ for co-degree regular hypergraphs, motivated by the theory of Cayley hypergraphs. They proved that if $H$ is a $3$-uniform   $k$-co-degree regular hypergraph on $n$ vertices, then $\mu^{(2)}(H) \geq \Omega(\sqrt{k(n-k)/n})$, and they noted that  similar conclusion holds for higher uniformities as well. Lenz and Mubayi \cite{LenzMubayi} also considered $\mu^{(2)}(H)$ in relation to hypergraph quasirandomness. While $\mu^{(2)}(H)$ might be a good measure of the second eigenvalue of dense hypergraphs, it becomes unusable for sparse ones. Indeed, if $H$ is the random 3-uniform hypergraph on $n$ vertices, in which each edge is included with probability $p$, then with high probability $\mu^{(2)}(H)=\Theta(\sqrt{pn})$ if $1/n\ll p \ll 1/2$, but $\mu^{(2)}(H)=\Theta(1)$ if $p\ll 1/n$.   

Another analogue of the Alon-Boppana theorem for $d$-regular hypergraphs is proposed by Li and Mohar \cite{LM}. They study the quantity $\mu^{(r)}(H)$, and prove that if $H$ is an $r$-uniform $d$-regular hypergraph, then 
$$\mu^{(r)}(H) \geq \frac{r}{r-1}\left((r-1)(d-1)\right)^{1/r}-o_n(1).$$
Here, the quantity $\frac{r}{r-1}\left((r-1)(d-1)\right)^{1/r}$  coincides with the spectral radius of the infinite $d$-regular hypertree by a result of Friedman \cite{hypertree}. Therefore, it is tempting to believe that this is the right quantity due to its attractive parallel with the graph case.  However, as the first part of our next theorem shows, even $\lambda_2^{(r)}(H)$ is always at least $\Omega_r(\sqrt{d})$ for any uniformity, thus greatly improving the result of Li and Mohar. We also note that the analogous bound can be achieved for $\mu^{(p)}(H)$ in a slightly wider range of $d$. This gives an alternative proof of the result of Friedman and Wigderson \cite{FW} for $3$-uniform hypergraphs, and also extends the result to regular hypergraphs.

\begin{theorem}\label{thm:2nd eigenvalue}
Let $H$ be a $d$-regular $r$-uniform hypergraph on $n$ vertices, $r\geq 3$.
    \begin{itemize}
        \item[(i)] 
          If $d \leq n^{2/3}$, then $$\lambda_2^{(r)}(H) \geq c\sqrt{d}$$ for some $c=c(r)>0$ depending only on $r$. In general, for any $p\geq 1$, $$\lambda_2^{(p)}(H)\geq cn^{1-r/p}\sqrt{d}.$$
        \item[(ii)]
         If $d <\frac{n^2}{4}$, then $$\mu^{(r)}(H) \geq c\sqrt{d}$$ for some $c = c(r) > 0$ depending only on $r$. In general, for any $p \geq 1$, 
        $$\mu^{(p)}(H) \geq c n^{1-r/p} \sqrt{d}.$$

        \item[(iii)]
        If $r\geq 4$ and $\frac{n^2}{4}\leq d\leq \frac{1}{2}\binom{n-1}{r-1}$, then 
         $$\mu^{(r)}(H) \geq \frac{cd}{n}$$
         for some $c = c(r) > 0$ depending only on $r$. In general, for any $p \geq 1$, 
        $$\mu^{(p)}(H) \geq cdn^{-r/p}$$
    \end{itemize}
\end{theorem}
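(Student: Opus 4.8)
The plan is to evaluate the forms $\lambda_2^{(p)},\mu^{(p)}$ on indicator vectors $\mathbf{1}_U$ of vertex subsets $U$ and to read the bounds off from the discrepancy results of the previous subsection. The computational starting point is the identity: since the sum defining $\tau_H$ runs over ordered $r$-tuples of \emph{distinct} vertices spanning an edge, $\tau_H(\mathbf{1}_U,\dots,\mathbf{1}_U)=r\,e_H(U)$, while $J(\mathbf{1}_U,\dots,\mathbf{1}_U)=|U|^r$, so, using $r|E(H)|=dn$,
$$\sigma_H(\mathbf{1}_U,\dots,\mathbf{1}_U)=r\,e_H(U)-\frac{d|U|^r}{n^{r-1}}=r\disc(U)-\delta_U,\qquad \delta_U:=dn\Big(\frac{|U|^r}{n^r}-\frac{\binom{|U|}{r}}{\binom{n}{r}}\Big).$$
Since $\binom{|U|}{r}/\binom{n}{r}=\prod_{i=0}^{r-1}\frac{|U|-i}{n-i}$ and each factor is at most $|U|/n$, we have $0\le\delta_U\le r(r-1)d\,(|U|/n)^{r-1}\le r(r-1)d$. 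Moreover $\|\mathbf{1}_U\|_p^r=|U|^{r/p}$, so plugging $\mathbf{1}_U/\|\mathbf{1}_U\|_p$ into every slot,
$$\lambda_2^{(p)}(H)\ge \frac{\sigma_H(\mathbf{1}_U,\dots,\mathbf{1}_U)}{|U|^{r/p}},\qquad \mu^{(p)}(H)\ge \frac{|\sigma_H(\mathbf{1}_U,\dots,\mathbf{1}_U)|}{|U|^{r/p}}.$$

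For part (i): take $U$ with $\disc(U)=\disc^{+}(H)$, which by Theorem \ref{thm:main_pos_disc} is $\Omega_r(\sqrt d\,n)$ since $d<n^{2/3}$; in this range $\delta_U\le r(r-1)d=o(\sqrt d\,n)$, so $\sigma_H(\mathbf{1}_U,\dots,\mathbf{1}_U)=r\disc(U)-\delta_U=\Omega_r(\sqrt d\,n)>0$, and dividing by $|U|^{r/p}\le n^{r/p}$ gives $\lambda_2^{(p)}(H)=\Omega_r(n^{1-r/p}\sqrt d)$; the case $p=r$ is the stated bound. It is essential that $\disc^{+}$ is positive, as $\lambda_2^{(p)}$ carries no absolute value. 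For part (iii): average $\sigma_H(\mathbf{1}_U,\dots,\mathbf{1}_U)$ over uniformly random $U$ of size $\lfloor n/2\rfloor$. Since $\mathbb{E}[\disc(U)]=0$ for such $U$ and $\delta_U$ depends only on $|U|$, the average equals $-\delta$, where $\delta$ is the common value of $\delta_U$ over all $U$ of size $\lfloor n/2\rfloor$; expanding $\binom{\lfloor n/2\rfloor}{r}/\binom{n}{r}$ one gets $\delta=\frac{r(r-1)}{2^{r+1}}d+O_r(d/n)$, which is $\Theta_r(d)$ for $n\ge n_0(r)$. Hence some $U$ of size $\lfloor n/2\rfloor$ has $\sigma_H(\mathbf{1}_U,\dots,\mathbf{1}_U)\le-\delta$, so $|\sigma_H(\mathbf{1}_U,\dots,\mathbf{1}_U)|\ge\delta=\Omega_r(d)$, and dividing by $|U|^{r/p}$ yields $\mu^{(p)}(H)=\Omega_r(d\,n^{-r/p})$, with $p=r$ giving $\Omega_r(d/n)$. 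This argument requires no upper bound on $d$.

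For part (ii), split on the size of $d$. If $d\ge c_1 n^2$ for a suitable small $c_1=c_1(r)>0$, the argument of (iii) gives $\mu^{(p)}(H)=\Omega_r(d\,n^{-r/p})$, and because $d/n\ge\sqrt{c_1}\,\sqrt d$ this is $\Omega_r(n^{1-r/p}\sqrt d)$. If $d<c_1 n^2$, take $U$ with $|\disc(U)|=\disc(H)=\Omega_r(\sqrt d\,n)$ (Theorem \ref{thm:main_disc}, applicable in this range). If $\disc(U)\le 0$ then $\sigma_H(\mathbf{1}_U,\dots,\mathbf{1}_U)=r\disc(U)-\delta_U\le r\disc(U)$, so $|\sigma_H(\mathbf{1}_U,\dots,\mathbf{1}_U)|\ge r|\disc(U)|=\Omega_r(\sqrt d\,n)$; here $\delta_U$ has the favorable sign. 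If $\disc(U)>0$ then $\sigma_H(\mathbf{1}_U,\dots,\mathbf{1}_U)\ge r\disc(U)-r(r-1)d$, and choosing $c_1$ small enough (in terms of $r$ and the implied constant of Theorem \ref{thm:main_disc}) makes $r(r-1)d\le\frac12 r\disc(U)$, so $\sigma_H(\mathbf{1}_U,\dots,\mathbf{1}_U)\ge\frac12 r\disc(U)=\Omega_r(\sqrt d\,n)$. In every case, dividing $|\sigma_H(\mathbf{1}_U,\dots,\mathbf{1}_U)|$ by $|U|^{r/p}\le n^{r/p}$ gives $\mu^{(p)}(H)=\Omega_r(n^{1-r/p}\sqrt d)$.

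All the substance is carried by Theorems \ref{thm:main_disc} and \ref{thm:main_pos_disc}; the one delicate point is the nonnegative error $\delta_U$. For $\lambda_2^{(p)}$ it must be dominated by a \emph{positive} discrepancy, which is exactly why part (i) inherits the hypothesis $d\le n^{2/3}$ of Theorem \ref{thm:main_pos_disc} and cannot be improved beyond it by this method. For $\mu^{(p)}$ it is harmless when its sign is favorable, is absorbed by assuming $d=O_r(n^2)$ otherwise, and — crucially — becomes the dominant term in the dense regime, which is precisely what makes part (iii), and the top of the range in part (ii), work.
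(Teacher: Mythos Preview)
Your proof is correct and follows essentially the same route as the paper: evaluate $\sigma_H$ on normalized indicators, identify the error term $\delta_U=dn\big((|U|/n)^r-\binom{|U|}{r}/\binom{n}{r}\big)$, and invoke Theorems~\ref{thm:main_pos_disc} and~\ref{thm:main_disc} for (i) and (ii), and the random half-set averaging (where $\mathbb{E}\,\disc(U)=0$ so only $-\delta_U$ survives) for (iii). If anything, your argument is slightly more careful than the paper's in two places: you explicitly record the sign $\delta_U\ge 0$ and exploit it in the case $\disc(U)\le 0$ of part (ii), and your computation of $\delta$ for $|U|=\lfloor n/2\rfloor$ is cleaner than the paper's corresponding estimate.
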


This result follows almost immediately from our bounds on the discrepancy. Indeed, it turns out that $\mu^{(p)}(H)$ controls the discrepancy of $H$, while $\lambda_2^{(p)}(H)$ controls the positive discrepancy. In particular, we have the following relationship between these quantities.

\begin{lemma}
Let $H$ be an $r$-uniform hypergraph on $n$ vertices of average degree $d$. Then
$$n^{r/p}\cdot\lambda_2^{(p)}(H)\geq r\disc^{+}(H)-O_r(d)$$
and 
$$n^{r/p}\cdot \mu^{(p)}(H)\geq r\disc(H)-O_r(d).$$
\end{lemma}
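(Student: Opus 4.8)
The plan is to test $\sigma_H$ on (normalized) indicator vectors. Fix $U\subseteq V(H)$ with characteristic vector $\mathbf{1}_U\in\mathbb{R}^V$. In the sum defining $\tau_H$, summands with a repeated vertex vanish, and each edge $e\in E(H)$ contributes its $r!$ orderings, each equal to $1$ precisely when $e\subseteq U$; hence $\tau_H(\mathbf{1}_U,\dots,\mathbf{1}_U)=\frac{r!}{(r-1)!}\,e(U)=r\cdot e(U)$, while $J(\mathbf{1}_U,\dots,\mathbf{1}_U)=|U|^r$. Writing $p=|E(H)|/\binom{n}{r}$ and $e(U)=\disc(U)+p\binom{|U|}{r}$, we obtain
$$\sigma_H(\mathbf{1}_U,\dots,\mathbf{1}_U)=r\cdot e(U)-\frac{r|E(H)|}{n^r}|U|^r=r\,\disc(U)+rp\left(\binom{|U|}{r}-\frac{\binom{n}{r}}{n^r}|U|^r\right).$$

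The next step is to show that the parenthesized term contributes only an error of order $O_r(d)$, uniformly over $U$. With $m=|U|$, factoring out $m^r/r!$ turns the bracket into $\frac{m^r}{r!}\bigl(\prod_{i=1}^{r-1}(1-\tfrac{i}{m})-\prod_{i=1}^{r-1}(1-\tfrac{i}{n})\bigr)$; for $r\le m\le n$ (the only case that matters) both products are over reals in $[0,1]$, so the standard telescoping bound gives $\bigl|\prod_{i=1}^{r-1}(1-\tfrac{i}{m})-\prod_{i=1}^{r-1}(1-\tfrac{i}{n})\bigr|\le\sum_{i=1}^{r-1}i\bigl(\tfrac1m-\tfrac1n\bigr)\le\binom{r}{2}/m$. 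Hence $\bigl|\binom{m}{r}-\tfrac{\binom{n}{r}}{n^r}m^r\bigr|=O_r(m^{r-1})=O_r(n^{r-1})$, and since $rp\binom{n}{r}=r|E(H)|=nd$ and $n^r=\Theta_r(\binom{n}{r})$, we get $rp\cdot O_r(n^{r-1})=O_r(d)$. (When $m<r$ the bracket equals $-\tfrac{\binom{n}{r}}{n^r}m^r$ and the same bound is immediate.) Thus $\sigma_H(\mathbf{1}_U,\dots,\mathbf{1}_U)=r\,\disc(U)+\eta_U$ with $\eta_U\in[-O_r(d),0]$. This error estimate is the only real content of the proof, and it is where I expect whatever (mild) difficulty there is to lie; everything around it is bookkeeping.

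It remains to normalize and optimize. As $\|\mathbf{1}_U\|_p=|U|^{1/p}$ and $\sigma_H$ is multilinear, for nonempty $U$ the vector $x=\mathbf{1}_U/|U|^{1/p}$ has $\|x\|_p=1$ and $\sigma_H(x,\dots,x)=|U|^{-r/p}\sigma_H(\mathbf{1}_U,\dots,\mathbf{1}_U)$, which is at least $n^{-r/p}\sigma_H(\mathbf{1}_U,\dots,\mathbf{1}_U)$ whenever the latter is nonnegative, because $|U|\le n$. Picking $U$ with $\disc(U)=\disc^{+}(H)$ yields $\lambda_2^{(p)}(H)\ge n^{-r/p}(r\,\disc^{+}(H)-O_r(d))$ provided $r\,\disc^{+}(H)-O_r(d)\ge 0$; otherwise the claimed bound is trivial, since $\sigma_H(\mathds{1},\dots,\mathds{1})=0$ forces $\lambda_2^{(p)}(H)\ge 0$. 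Multiplying through by $n^{r/p}$ gives the first inequality. For the second, let $U$ achieve $\disc(H)=\max(\disc^{+}(H),\disc^{-}(H))$; from $\sigma_H(\mathbf{1}_U,\dots,\mathbf{1}_U)=r\,\disc(U)+\eta_U$ with $|\eta_U|=O_r(d)$ and $|t|\ge\max(t,-t)$ we obtain $|\sigma_H(\mathbf{1}_U,\dots,\mathbf{1}_U)|\ge r\,\disc(H)-O_r(d)$, and then taking $x_1=\dots=x_r=\mathbf{1}_U/|U|^{1/p}$ in the definition of $\mu^{(p)}$ concludes exactly as above, with the trivial bound $\mu^{(p)}(H)\ge 0$ handling the degenerate case.
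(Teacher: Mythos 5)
Your proposal is correct and follows essentially the same route as the paper: evaluate $\sigma_H$ on a normalized indicator vector of $U$, observe that this produces $r\disc(U)$ up to an error $rp\bigl(\binom{|U|}{r}-\tfrac{\binom{n}{r}}{n^r}|U|^r\bigr)=O_r(d)$, and then optimize over $U$. The paper is terser about the error estimate and does not spell out the degenerate case in which $r\disc^+(H)-O_r(d)$ would be negative (where your appeal to $\lambda_2^{(p)}(H)\ge\sigma_H(\mathds{1}/n^{1/p},\dots)=0$ is the right fix), but both of these are exactly the details one should fill in, and your telescoping bound for the product difference is a clean way to quantify the $O_r(d)$ term.
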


\begin{proof}
 Let $U\subset V(H)$, and let $y$ be the characteristic vector of $U$, then $x=|U|^{-1/p}\cdot y$ satisfies $||x||_p=1$. Also, 
 \begin{equation}\label{equ:char vector}
    \sigma_H(x,\dots,x)=|U|^{-r/p}\cdot \left(r e(U)-\frac{dn}{n^r}\cdot |U|^r\right).   
 \end{equation}
Here, we have the following relationship between the discrepancy and normalized adjacency map:
$$r\disc(U)-|U|^{r/p}\sigma_H(x,\dots,x)=dn\cdot\left(\frac{|U|^{r-1}}{n^{r-1}}-\frac{|U|\dots (|U|-r+1)}{n\dots (n-r+1)}\right)=O_r(d).$$
Therefore, choosing $U$ satisfying $\disc(U)=\disc^{+}(H)$, or $\disc(U)=\disc(H)$, verifies the two desired inequalities.
\end{proof}

Hence, as long as $\disc^{+}(H)=\Omega(\sqrt{d}n)\gg d$, which is satisfied for $d\ll n^{2/3}$ by Theorem \ref{thm:main_pos_disc}, the inequality $\lambda_2^{(p)}(H)\geq cn^{1-r/p}\sqrt{d}$ follows from the previous lemma. Similarly, the second inequality follows by using Theorem \ref{thm:main_disc}, and observing that there exists $\varepsilon > 0$ for which $r\disc^{+}(H) - O_{r}(d) = \Omega_{r}(\sqrt{d}n)$ provided that $d < \varepsilon n^2$. 

When $\varepsilon n^2\leq d\leq \frac{1}{2}\binom{n-1}{r-1}$, one can show the improved lower bound $\mu^{(p)}(H) = \Omega_r\left(n^{-r/p}d\right)$. Indeed, let $U$ be a uniformly random subset of $V(G)$ of size $\lceil \frac{n}{2} \rceil$, let $y$ denote the characteristic vector of $U$, and let $x = |U|^{-1/p} \cdot y$. Then $||x||_p = 1$, and $x$ also satisfies equation (\ref{equ:char vector}). Since 
$$\mathbb{E} \left(e(U) - \frac{dn}{rn^r}\cdot \frac{|U|^r}{n^r}\right) = p\binom{|U|}{r} - p\binom{n}{r}\frac{|U|^r}{n^r} \leq -c p n^{r-1}$$
for some constant $c>0$ depending only on $r$, it follows that there exists a set $U$ for which we have 
$$|\sigma(x,\dots,x)| \geq \Omega(n^{-r/p}d),$$
which implies the desired bound $\mu^{(p)}(H) = \Omega_r(n^{-r/p}d)$. 


\section{Proof overview}

Let us give a short overview of our proofs, in particular the proofs of Theorem \ref{thm:bisection} and Theorem \ref{thm:main_pos_disc}. We follow the ideas of \cite{RST}, which in turn were inspired by the semidefinite programming approach of Goemans and Williamson \cite{GW95} on the MaxCut problem.

Let $H$ be an $r$-uniform $d$-regular hypergraph. We assign certain unit vectors in $\mathbb{R}^{V(H)}$ to the vertices of $H$ with the property that if the vertices  $v$ and $w$ are both contained in some edge, then the scalar product of their corresponding vectors is slightly positive, in particular at least $\Omega(d^{-1/2})$. Then, we chose a random linear half-space, and define $X\subset V(H)$ to be the set of vertices whose corresponding vectors are contained in this half-space. The vectors are constructed in a manner to ensure that $r$-tuples of vertices forming an edge are more likely to be contained in $X$ than the average $r$-tuple. In particular, we show that the expected discrepancy of $X$ is $\Omega_r(\sqrt{d}n)$, proving Theorem \ref{thm:main_pos_disc} for regular hypergraphs. To prove the general statement, we argue that large degree vertices can be either omitted, or already contribute large discrepancy. In order to prove Theorem \ref{thm:bisection}, we further argue that the size of $X$ must be close to $n/2$. Hence, we can add or remove a few vertices  to get a set $X'$ of size $\lfloor n/2\rfloor $. We show that the bisection given by the partition $X' \cup (V(H)\setminus X')$ is of size $\frac{nd}{r}\left(1 - \frac{1}{2^{r-1}}\right) - \Omega(\sqrt{d}n)$ in expectation.

In order to execute this strategy, we first need a bound on the probability that given $r$ vectors $v_1,\dots,v_r$, they are simultaneously contained in a random linear half-space. This problem is discussed in the next subsection.

\section{A probabilistic geometric lemma}

In this section, we consider the following probabilistic problem in geometry, which is the backbone of our proofs. 

\begin{problem*}
Given $r$ vectors $v_1,\dots,v_r\in \mathbb{R}^n$, what is the probability that they are simultaneously contained in a random linear half-space?
\end{problem*}

\noindent
 To this end, let $\mathbf{w}$ be a random unit vector in $\mathbb{R}^n$, chosen  from the uniform distribution. Define
$$\mu(v_1,\dots,v_r):=\mathbb{P}\left( \langle \mathbf{w},v_i\rangle\geq 0\mbox{ for every } i\in [r]\right).$$
In case $r=2$, this probability is easy to calculate: $\mu(v_1,v_2)=\frac{\pi-\alpha}{2\pi}$, where $\alpha$ is the angle between $v_1$ and $v_2$. However, for $r\geq 3$, we are unable to provide an easy to use formula. Note that when $v_1,\dots,v_r$ are pairwise orthogonal, then $\mu(v_1,\dots,v_r)=1/2^r$. In the next lemma, we show that under some mild assumptions,
$$\mu(v_1,\dots,v_r)=\frac{1}{2^r}+\Theta_r\left(\sum_{1\leq i <j\leq r}\langle v_i,v_j\rangle\right).$$

\begin{lemma}\label{lemma:geometry}
For every $r$, there exist $0<c_1<c_2$ and $\alpha>0$ such that the following holds. Let $v_1,\dots,v_r\in \mathbb{R}^n$ be unit vectors such that $0\leq \langle v_i,v_j\rangle\leq \alpha$ for every $i,j\in [r]$. Then
$$\mu(v_1,\dots,v_r)\in\frac{1}{2^r}+ [c_1,c_2]\cdot \sum_{1\leq i<j\leq r} \langle v_i,v_j\rangle .$$
\end{lemma}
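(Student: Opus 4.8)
The plan is to reduce the computation of $\mu(v_1,\dots,v_r)$ to a low-dimensional Gaussian integral. First I would replace the random unit vector $\mathbf{w}$ by a standard Gaussian vector $\mathbf{g}\sim N(0,I_n)$; since the events $\{\langle \mathbf{w},v_i\rangle\geq 0\}$ depend only on the direction of $\mathbf{w}$, we have $\mu(v_1,\dots,v_r)=\Pr(\langle \mathbf{g},v_i\rangle\geq 0\ \forall i)$. Setting $Z_i=\langle \mathbf{g},v_i\rangle$, the vector $(Z_1,\dots,Z_r)$ is a centered Gaussian with covariance matrix $\Sigma$ given by $\Sigma_{ij}=\langle v_i,v_j\rangle$, so $\Sigma_{ii}=1$ and $\Sigma_{ij}=\rho_{ij}\in[0,\alpha]$ off-diagonal. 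Thus everything reduces to understanding $F(\Sigma):=\Pr(Z_1\geq 0,\dots,Z_r\geq 0)$ as a function of the correlation matrix $\Sigma$ near the identity, and the claim is that $F(\Sigma)=2^{-r}+\Theta_r\big(\sum_{i<j}\rho_{ij}\big)$ uniformly for $\rho_{ij}\in[0,\alpha]$ with $\alpha=\alpha(r)$ small.

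The main tool is \emph{Plackett's formula} (the derivative of the orthant probability with respect to a correlation), which gives
$$\frac{\partial F}{\partial \rho_{ij}}=\Pr\!\left(Z_i=Z_j=0,\ Z_k\geq 0\ \forall k\neq i,j\right)\cdot\frac{1}{2\pi\sqrt{1-\rho_{ij}^2}},$$
where the probability is the density of $(Z_i,Z_j)$ at $(0,0)$ times the conditional orthant probability of the remaining $r-2$ coordinates. At $\Sigma=I$ this derivative equals exactly $\frac{1}{2\pi}\cdot 2^{-(r-2)}=\frac{1}{2^{r-1}\pi}>0$, a positive constant depending only on $r$. So I would write $F(\Sigma)-2^{-r}=F(\Sigma)-F(I)$ as a line integral of $\nabla F$ along the segment from $I$ to $\Sigma$ inside the space of correlation matrices. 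Each partial derivative $\partial F/\partial\rho_{ij}$ is a continuous (indeed smooth) function of the off-diagonal entries on a neighborhood of the origin, hence by choosing $\alpha$ small enough it stays within, say, $[\tfrac12 c_0, 2c_0]$ of its value $c_0:=\frac{1}{2^{r-1}\pi}$ at the origin. Integrating, $F(\Sigma)-2^{-r}=\sum_{i<j}\rho_{ij}\int_0^1 \frac{\partial F}{\partial\rho_{ij}}(t\Sigma+(1-t)I)\,dt$, and each integral lies in $[\tfrac12 c_0,2c_0]$, which yields the conclusion with $c_1=\tfrac12 c_0$ and $c_2=2c_0$ (after possibly shrinking $\alpha$ so that the convex combination $t\Sigma+(1-t)I$ is always a valid, suitably bounded correlation matrix — this is automatic since the set of correlation matrices is convex and contains $I$).

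The step I expect to be the main obstacle is making the smoothness/continuity of $\partial F/\partial\rho_{ij}$ near $\Sigma=I$ precise and quantitative: one must check that the conditional orthant probability $\Pr(Z_k\geq 0\ \forall k\neq i,j\mid Z_i=Z_j=0)$ — which involves the Schur complement of the $2\times 2$ block and is itself an $(r-2)$-dimensional orthant probability — depends continuously on the $\rho_{k\ell}$ and tends to $2^{-(r-2)}$ as all correlations tend to $0$. This is a routine but slightly technical compactness/dominated-convergence argument (the Gaussian density and its derivatives are bounded, and the orthant indicator is a fixed bounded function), and an alternative cleaner route avoiding it is to expand $F(\Sigma)$ directly: write $\mathbf{1}[Z_i\ge 0]=\tfrac12+\tfrac12\,\mathrm{sgn}(Z_i)$, multiply out, and use the standard fact $\mathbb{E}[\mathrm{sgn}(Z_i)\mathrm{sgn}(Z_j)]=\tfrac{2}{\pi}\arcsin\rho_{ij}$ together with bounds on the higher-order mixed sign-correlations $\mathbb{E}[\prod_{k\in S}\mathrm{sgn}(Z_k)]$ for $|S|\ge 3$, showing the latter are $O_r(\alpha^{\lceil |S|/2\rceil})=O_r(\alpha^2)$ hence negligible compared to the linear term $\frac{1}{2^{r-1}\pi}\sum_{i<j}\rho_{ij}$. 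Either way the positivity of the leading coefficient and the smallness of the remainder for $\alpha$ small give the two-sided bound.
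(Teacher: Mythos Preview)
Your approach is correct and takes a genuinely different route from the paper. The paper argues geometrically: after projecting to the span of the $v_i$ and applying an isometry so that the matrix with rows $v_1,\dots,v_r$ is lower-triangular with nonnegative diagonal, it shows by induction that the off-diagonal entries satisfy $v_\ell(i)\in[-O_r(a^2),O_r(a)]$ and $v_\ell(\ell)\ge 1/2$ (where $a=\max_{i<j}\langle v_i,v_j\rangle$), and then sandwiches the region $\{w\in B_1:\langle v_i,w\rangle\ge 0\ \forall i\}$ between explicit unions of boxes and the positive orthant of the ball, reading off the volume bounds directly. Your Gaussian reformulation together with Plackett's identity is more analytic and more informative: it pins down the exact first-order coefficient $\tfrac{1}{2^{r-1}\pi}$ in front of $\sum_{i<j}\rho_{ij}$, and reduces the two-sided estimate to continuity of $\partial F/\partial\rho_{ij}$ near $\Sigma=I$; the sign-expansion alternative you sketch (using $\mathbb{E}[\mathrm{sgn}(Z_i)\mathrm{sgn}(Z_j)]=\tfrac{2}{\pi}\arcsin\rho_{ij}$, the vanishing of odd products by symmetry, and the Wick/Hermite bound $\mathbb{E}\prod_{k\in S}\mathrm{sgn}(Z_k)=O_r(\alpha^{|S|/2})$ for even $|S|\ge 4$) makes this fully quantitative without any soft compactness step. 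The paper's proof buys complete self-containment with no external Gaussian identities; yours buys a sharper leading constant and a cleaner explanation of why the answer is linear in the inner products to first order.
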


\begin{proof}
Without loss of generality, we may assume that $a=\langle v_1,v_2\rangle$ is maximal among $\langle v_i,v_j\rangle$, $1\leq i<j\leq r$. Then, our goal is to show that $\mu(v_1,\dots,v_r)=\frac{1}{2^r}+\Theta_r(a)$ assuming $a\leq \alpha$ is sufficiently small with respect to $r$.

 Let $H$ be an $r$-dimensional linear hyperplane containing $v_1,\dots,v_r$. Note that if $\mathbf{w'}$ is the projection of $\mathbf{w}$ onto $H$, then $\langle v_i,\mathbf{w}\rangle=\langle v_i,\mathbf{w'}\rangle$ and $\frac{\mathbf{w'}}{||\mathbf{w'}||_2}$ is uniformly distributed on the unit sphere of $\mathbb{R}^r$. Hence, we may assume that $n=r$. Furthermore, note that if $A$ is an isometry of $\mathbb{R}^r$, then $\langle Ax,Ay\rangle=\langle x,y\rangle$ for any $x,y\in \mathbb{R}^r$, and $A\mathbf{w}$ has the same distribution as $\mathbf{w}$. Hence, after applying a suitable isometry to the vectors $v_1,\dots,v_r$, we may assume that the matrix with rows $v_1,\dots,v_r$ is lower-triangular with non-negative diagonal entries. In other words, $v_i(i)\geq 0$ and $v_i(i+1)=v_i(i+2)=\dots=v_i(r)=0$ for $i\in [r]$. Note that the numbers $\langle v_i,v_j\rangle$ then uniquely determine the $r$ vectors $v_1,\dots,v_r$. Finally, we may assume that $\mathbf{w}$ is chosen randomly in the unit ball of $\mathbb{R}^r$ from the uniform distribution, instead of the unit sphere. 

Recall that $a=\langle v_1,v_2\rangle$. Then $v_1=(1,0,\dots,0)$ and $v_2=(a,\sqrt{1-a^2},0,\dots,0)$. Also, by the maximality of $a$, $$a\leq \sum_{1\leq i<j\leq r} \langle v_i,v_j\rangle\leq r^2 a.$$
Next, let us bound the entries of $v_{\ell}$ for $\ell\in [r]$.

\begin{claim}\label{claim:prob}
Let $\ell \in [r]$. Then $v_{\ell}(\ell)\geq 1/2$. Also, there exists $c=c(r)>0$ such that if $i< \ell$, then $v_{\ell}(i)\in [-ca^2,ca]$.
\end{claim}
\begin{proof}
Assume that $\alpha<\frac{1}{18r}$. We prove the following statement by double induction, first on $\ell$, then on $i$: for every  $\ell\geq 2$, if $i\in \{1,\dots,\ell-1\}$, then $v_{\ell}(i)\in [-18ra^2,3a]$, and $v_{\ell}(\ell)\in [1/2,1]$. As $$v_{\ell}(\ell)=\left(1-\sum_{i=1}^{\ell-1}v_{\ell}(i)^2\right)^{1/2}\geq \left(1-9ra^2\right)^{1/2},$$ the inequality $v_{\ell}(\ell)\geq 1/2$ follows by noting that $a\leq \alpha$ and assuming that our induction hypothesis holds for $i\leq \ell-1$.

 Note that $v_{\ell}(1)=\langle v_1,v_{\ell}\rangle\in [0,a]$, so the statement is true for $\ell\geq 2$ and $i=1$. Now fix $\ell\geq 3$ and $2\leq i<\ell$, and assume that our induction hypothesis is true for every pair $(\ell_0,i_0)$ with $\ell_0<\ell$ or $\ell_0=\ell$ and $i_0<i$. Noting that $v_i(j)=0$ if $j>i$, we have 
$$\langle v_i,v_{\ell}\rangle =\sum_{k=1}^{i} v_i(k)v_{\ell}(k).$$ 
Hence,
$$v_i(i) v_{\ell}(i)=\langle v_i,v_{\ell}\rangle-\sum_{k=1}^{i-1} v_i(k)v_{\ell}(k).$$
Here, $v_{i}(i)\in [1/2,1]$, $\langle v_i,v_{\ell}\rangle\in [0,a]$, and each term $v_i(k)v_{\ell}(k)$ in the sum is in $[-9a^2,9a^2]$. Hence, $v_i(i)v_{\ell}(i)\in [-9ra^2,a+9r a^2]$, and $v_{\ell}(i)\in [-18ra^2,2a+18ra^2]$. The statement follows as $a\leq 1/(18r)$.
\end{proof}

Let $B_1$ denote the unit ball in $\mathbb{R}^r$, let $B_1^+\subset B_1$ be the set of vectors with nonnegative coordinates, and let $S\subset B_1$ be the set of vectors $w$ for which $\langle v_i,w\rangle\geq 0$ holds for every $i\in [r]$. Note that $\vol(B_1^+)/\vol(B_1)=1/2^{r}$ and $\mu(v_1,\dots,v_r)=\vol(S)/\vol(B_1)$, where $\vol(.)$ denotes the volume.

First, we establish an upper bound on $\mu(v_1,\dots,v_r)$. Let $c>0$ be the constant given by the previous claim, and for $i=1,\dots,r$, let $$R_i=\{z\in [-1,1]^r: -2cra\leq z(i)\leq 0\}.$$ We claim that $S\subset B_1^+\cup R_1\cup\dots \cup R_r$. Indeed, let $w\in B_1$ be a vector, and assume that $w(\ell)<-2cra$ for some $\ell\in [r]$. Then 
$$\langle w,v_{\ell}\rangle=\sum_{i=1}^{\ell}w(i)v_{\ell}(i)\leq (\ell-1)ca+\frac{w(\ell)}{2}<0$$
by Claim \ref{claim:prob}, so $w\not\in S$. But $$\vol(B_1^+\cup R_1\cup\dots\cup R_r)\leq \frac{\vol(B_1)}{2^r}+r\cdot(2^{r-1}\cdot (2cra)).$$
Therefore,
$$\mu(v_1,\dots,v_r)=\frac{\vol(S)}{\vol(B_1)}\leq \frac{1}{2^r}+\frac{2^r r^2 ca}{\vol(B_1)}\leq \frac{1}{2^r}+c_2\sum_{1\leq i<j\leq r} \langle v_i,v_j\rangle$$
 with $c_2=\frac{2^r r^2 c}{\vol(B_1)}$.

Next, we establish the lower bound on $\mu(v_1,\dots,v_r)$. First, let $Q$ be the set of vectors in $B$ whose every coordinate is at least $2rca^2$. If $w\in Q$, then 
 $$\langle v_{\ell},w\rangle=\sum_{i=1}^{\ell}v_{\ell}(i)w(i)\geq \left(\sum_{i=1}^{\ell-1}-ca^2\right)+w(\ell)v_{\ell}(\ell)\geq 0$$
 by Claim \ref{claim:prob}, so $Q\subset S$. Furthermore, $\vol(Q)\geq \vol(B_1^+)-2r^2ca^2$. Let 
 $$R=\left[\frac{1}{2r},\frac{1}{r}\right]\times \left[-\frac{a}{2r},0\right]\times \left[\frac{1}{2r},\frac{1}{r}\right]^{r-2}.$$ Observe that $R\subset B_1$. For every  $w\in R$, we have  
 $$\langle v_2,w\rangle=aw(1)+\sqrt{1-a^2}\cdot w(2)\geq \frac{a}{2r}-\frac{a}{2r}=0,$$
 and if $\ell \neq 2$, then
 $$\langle v_{\ell},w\rangle=v_{\ell}(\ell)w(\ell)+v_{\ell}(2)w(2)+\sum_{i\leq \ell-1, i\neq 2}v_{\ell}(i)w(i)\geq \frac{1}{4r}-\frac{ca^2}{2r}-(\ell-2)\cdot \frac{ca^2}{r}\geq 0.$$
 Thus $R\subset S$ holds as well. But then $Q\cup R\subset S$, and we have 
 \begin{align*}
     \mu(v_1,\dots,v_r)&=\frac{\vol(S)}{\vol(B_1)}\geq \frac{\vol(Q)}{\vol(B_1)}+\frac{\vol(R)}{\vol(B_1)}\\
     &\geq \frac{1}{2^r}-\frac{2rca^2}{\vol(B_1)}+\frac{a}{(2r)^r\vol(B_1)}\geq \frac{1}{2^r}+c_1\sum_{1\leq i<j\leq r} \langle v_i,v_j\rangle
 \end{align*}
 with suitable $c_1>0$, assuming $a\leq \alpha$ is sufficiently small.
\end{proof}

\section{Bisection width}


In this section, we prove Theorems \ref{thm:bisection} and \ref{thm:mixed}. We prepare the proof with a technical lemma, which is also the key result in the proof of Theorem \ref{thm:main_pos_disc}. But first, let us recall the definition of discrepancy and positive discrepancy.

\begin{definition}
Let $H$ be an $r$-uniform hypergraph with $n$ vertices and edge density $p=\frac{|E(H)|}{\binom{n}{r}}$. Given $U\subset V(H)$, define the discrepancy of $U$ as 
$$\disc(U)=e(U)-p\binom{|U|}{r}.$$
Then the \emph{positive discrepancy} of $H$ is defined as
$$\disc^{+}(H)=\max_{U\subset V(H)}\disc(U).$$
Similarly, the \emph{negative discrepancy} of $H$ is
$$\disc^{-}(H)=\max_{U\subset V(H)}-\disc(U),$$
and the \emph{discrepancy} is $$\disc(H)=\max_{U\subset V(H)} |\disc(U)|=\max\{\disc^{+}(H),\disc^{-}(H)\}.$$
\end{definition}

Now we are ready to state our key lemma.

\begin{lemma}\label{lemma:disc_maxdeg}
Let $H$ be an $r$-uniform hypergraph on $n$ vertices of maximum degree $\Delta$. 
\begin{itemize}
    \item[(i)] If $\Delta\leq n^{2/3}$, then for some $c=c(r)>0$, $$\disc^{+}(H)\geq \frac{ce(H)}{\sqrt{\Delta}}.$$

    \item[(ii)] If $e(H)>C\Delta^2 \sqrt{n}$ for some sufficiently large $C=C(r)>1$, then the vertex set of $H$ can be partitioned into two parts, $X$ and $Y$, such that for some $c'=c'(r)>0$,
$$e(X)+e(Y)-\Delta\cdot||X|-|Y||\geq e(H)\cdot\left(\frac{1}{2^{r-1}}+\frac{c'}{\sqrt{\Delta}}\right).$$
\end{itemize}

\end{lemma}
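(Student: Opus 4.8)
The plan is to implement the semidefinite-programming-inspired strategy outlined in the proof overview, using Lemma \ref{lemma:geometry} as the geometric engine. For part (i), I would first assign to each vertex $v$ a unit vector $x_v \in \mathbb{R}^{V(H)}$ such that whenever $v$ and $w$ lie in a common edge, $\langle x_v, x_w\rangle$ is positive and of order $\Theta(\Delta^{-1/2})$, while all pairwise inner products stay nonnegative and below the threshold $\alpha$ of Lemma \ref{lemma:geometry}. A natural choice is $x_v = \frac{1}{\sqrt{1+t d_v}}\bigl(e_v + \sqrt{t}\sum_{w \sim v} e_{\{v,w\}}\bigr)$-type construction, i.e.\ take a scaled combination of a ``private'' coordinate for $v$ and shared coordinates indexed by the pairs (or edges) through $v$, with a parameter $t \asymp 1/\Delta$ tuned so that $\langle x_v, x_w\rangle \asymp \Delta^{-1/2}$ for co-edge vertices; one checks the normalization is $1+O(t\Delta) = \Theta(1)$ and that vertices in no common edge get inner product $0$. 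Then pick a uniformly random unit vector $\mathbf{w}$, set $X = \{v : \langle \mathbf{w}, x_v\rangle \geq 0\}$, and estimate $\mathbb{E}[\disc(X)] = \mathbb{E}[e(X)] - p\,\mathbb{E}\binom{|X|}{r}$. By linearity and Lemma \ref{lemma:geometry}, each edge $e$ is fully contained in $X$ with probability $\frac{1}{2^r} + \Theta_r\bigl(\sum_{i<j}\langle x_{v_i},x_{v_j}\rangle\bigr) = \frac{1}{2^r} + \Theta_r(\Delta^{-1/2})$, so $\mathbb{E}[e(X)] \geq e(H)(2^{-r} + \Omega_r(\Delta^{-1/2}))$; meanwhile since the $x_v$ have pairwise nonnegative inner products, a non-edge $r$-tuple is contained in $X$ with probability at least $2^{-r}$ in a controlled way, and $\mathbb{E}\binom{|X|}{r}$ can be bounded above by roughly $\binom{n}{r}2^{-r}(1+o(1))$ using concentration of $|X|$ around $n/2$ (or a direct second-moment/inclusion computation). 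Comparing, $\mathbb{E}[\disc(X)] \geq \Omega_r(e(H)\Delta^{-1/2})$ minus lower-order terms, which forces the existence of a single set with $\disc(U) \geq \Omega_r(e(H)/\sqrt{\Delta})$; the condition $\Delta \leq n^{2/3}$ is exactly what makes the main term $e(H)/\sqrt{\Delta} \asymp \sqrt{d}\,n$ dominate the $O_r(d) = O_r(e(H)r/n)$-type error and the fluctuation of $\binom{|X|}{r}$.

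For part (ii), I would run the same random-halfspace construction but additionally track $|X|$. Since $\mathbb{E}|X| = n/2$ and $|X|$ is well concentrated (it is a sum of weakly dependent indicators; a Chebyshev/Azuma bound suffices), and moreover the event that $e(X)$ is large forces $|X|$ not too far from $n/2$, I would argue that with positive probability we simultaneously have $e(X)+e(Y) \geq e(H)(2^{1-r} + \Omega_r(\Delta^{-1/2}))$ (note $e(X)+e(Y) = e(H) - (\text{cut edges})$, and both $e(X)$ and $e(Y)=e(V\setminus X)$ get the same $+\Omega(\Delta^{-1/2})$ boost by symmetry of the construction, or one works directly with the cut) and $\bigl||X|-|Y|\bigr| = O(\sqrt{n\log n})$ or so. Then $\Delta \cdot \bigl||X|-|Y|\bigr| = O(\Delta\sqrt{n}\,\mathrm{polylog})$, which is absorbed into $e(H)\cdot\frac{c'}{\sqrt\Delta}$ precisely under the hypothesis $e(H) > C\Delta^2\sqrt{n}$ (so that $e(H)/\sqrt{\Delta} \gg \Delta^{3/2}\sqrt n \geq \Delta\sqrt n$), after possibly shrinking $c'$. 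This yields the claimed partition; the later proof of Theorem \ref{thm:bisection} then balances $X$ by moving $O(\Delta\cdot|\text{imbalance}|)$ edges, which is why the $\Delta\cdot\bigl||X|-|Y|\bigr|$ term appears in the statement.

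The main obstacle I anticipate is controlling $\mathbb{E}\binom{|X|}{r}$ — equivalently, the probability that a \emph{typical} (non-edge) $r$-tuple of vertices lands in $X$ — with enough precision. Lemma \ref{lemma:geometry} gives a clean two-sided estimate only when all pairwise inner products are nonnegative and below $\alpha$, so the vector assignment must be designed so that this holds for \emph{every} $r$-subset, not just edges; and I need the upper bound $\Pr[\text{$r$-tuple} \subseteq X] \leq 2^{-r} + (\text{small})$ to hold on average over all $r$-tuples, where the ``small'' total contribution is $o(e(H)/\sqrt\Delta)$. This is where the regime restriction bites: the sum over all $\binom{n}{r}$ tuples of their pairwise-inner-product contributions must be shown to be dominated by the $\binom{?}{?}$-scale sum over edges times $\Delta^{-1/2}$, which uses that inner products are nonzero only for co-edge pairs and that each pair lies in at most $\Delta$ edges — hence the $\Delta \leq n^{2/3}$ and $e(H) > C\Delta^2\sqrt n$ thresholds. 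A secondary technical point is the concentration of $|X|$: $\langle \mathbf w, x_v\rangle$ for different $v$ are correlated through the shared coordinates, so I would use a bounded-differences inequality on $\mathbf w$ (Lévy concentration on the sphere) rather than independence, which is routine but needs to be stated carefully.
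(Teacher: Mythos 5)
Your overall strategy is the paper's: assign unit vectors, pick a random linear half-space, use Lemma~\ref{lemma:geometry} to estimate edge/non-edge inclusion probabilities, and control $|X|$ via a second-moment bound (the paper does this directly via Lemma~\ref{lemma:geometry} with $r=2$, rather than L\'evy concentration, but either works). However, the specific vector construction you propose does not deliver the scaling you claim, and this is not a cosmetic point.

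Your vectors $x_v = (1+td_v)^{-1/2}\bigl(e_v + \sqrt{t}\sum_{w\sim v} e_{\{v,w\}}\bigr)$ use \emph{pair-indexed} shared coordinates. The basis vector $e_{\{v,w\}}$ carries the small weight $\sqrt{t}$ in both $x_v$ and $x_w$, and the private coordinates $e_v$, $e_w$ never overlap. Hence for co-edge $v,w$ the inner product of the unnormalized vectors is exactly $t$, and (since $\|x_v\|^2 = 1 + t d_v$ with $d_v \leq (r-1)\Delta$) no choice of $t$ makes the normalized inner product larger than order $\Delta^{-1}$: with $t \asymp 1/\Delta$ you get $\langle y_v,y_w\rangle \asymp 1/\Delta$, and with $t \asymp \Delta^{-1/2}$ the normalization blows up to $\Theta(\Delta^{1/2})$, again yielding $\langle y_v,y_w\rangle \asymp 1/\Delta$. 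Plugging this into Lemma~\ref{lemma:geometry} would give $\Pr[e\subseteq X] \geq 2^{-r} + \Omega_r(1/\Delta)$ and hence only $\disc^{+}(H) \gtrsim e(H)/\Delta$, a factor $\sqrt{\Delta}$ short of the lemma.

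The paper's construction avoids this by using \emph{vertex-indexed} shared coordinates: $x_v(u) = \alpha/\sqrt{2r\Delta}$ when $u$ and $v$ are co-edge, and $x_v(v)=1$. The key is that the coordinate $u$ is small ($\asymp\Delta^{-1/2}$) in $x_v$ but \emph{large} ($=1$) in $x_u$, so the cross terms $x_v(u)x_u(u) + x_v(v)x_u(v) = 2\alpha/\sqrt{2r\Delta}$ dominate and give $\langle y_u,y_v\rangle \asymp \Delta^{-1/2}$. This asymmetry between the private coordinate and the shared coordinates is precisely what a symmetric pair-indexed construction cannot replicate. Once this is fixed, your remaining steps line up with the paper: the $2^{-r}$ terms cancel exactly because $p\binom{n}{r}=e(H)$; the error term $\sum_{r\text{-tuples}}\sum_{i<j}\langle y_{v_i},y_{v_j}\rangle$ reduces to $\binom{n-2}{r-2}\sum_{\{v,v'\}}\langle y_v,y_{v'}\rangle$ which one bounds by $O_r(\Delta\alpha^2 n)$, and the constraints $\Delta\leq n^{2/3}$ (part (i)) and $e(H)>C\Delta^2\sqrt n$ (part (ii)) are exactly what make the main term $e(H)/\sqrt\Delta$ beat the error and the $\Delta\cdot\mathbb{E}\bigl||X|-|Y|\bigr| \lesssim \Delta^{3/2}\sqrt n$ correction.
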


\begin{proof}
Let $\alpha=\alpha(r)$ satisfying $0<\alpha<\min\{0.1,\alpha_0\}$ be specified later, where $\alpha_0$ is the constant given by Lemma \ref{lemma:geometry} as $\alpha$. We may assume that $\Delta$ and $n$ are sufficiently large with respect to $\alpha$, so in turn, with respect to $r$. Also, let $p=e(H)/\binom{n}{r}$ be the edge density of $H$. Let $V=V(H)$, and for every vertex $v\in V$, assign the vector $x_v\in \mathbb{R}^V$ as follows: for $u\in V$,
$$x_v(u)=\begin{cases}1 &\mbox{ if }v=u,\\
\frac{\alpha}{\sqrt{2r\Delta}} &\mbox{ if there exists }e\in E(H)\mbox{ such that } v,u\in e,\\
0&\mbox{ otherwise.}\end{cases}$$
Note that 
$$1\leq ||x_v||_2^2\leq 1+(r-1)\cdot \Delta\cdot \left(\frac{\alpha}{\sqrt{2r\Delta}}\right)^2\leq 2.$$
Let $y_v=x_v/||x_v||_2$ be the normalization of $x_v$. Clearly, $0\leq \langle y_u,y_v\rangle$ for any $u,v\in V$, and if $u\neq v$, then 
$$\langle y_u,y_v\rangle\leq \langle x_u,x_v\rangle\leq  \frac{2\alpha}{\sqrt{2r\Delta}}+(r-1)\cdot\Delta\cdot \left(\frac{\alpha}{\sqrt{2r\Delta}}\right)^2<\alpha.$$
Furthermore, if $u$ and $v$ both appear in some edge $e$, then $$\langle y_u,y_v\rangle\geq \frac{1}{2}\langle x_u,x_v\rangle\geq \frac{\alpha}{\sqrt{2r\Delta}}.$$ Let $\mathbf{w}$ be a random unit vector in $\mathbb{R}^V$, chosen from the uniform distribution, and define $$X=\{v\in V:\langle y_v,\mathbf{w}\rangle\geq 0\}.$$

First, we calculate the expected discrepancy of $X$ in $H$. We have
$$\mathbb{E}(\disc(X))=\mathbb{E}(e(X))-p\cdot \mathbb{E}\binom{|X|}{r}.$$
For each $r$-element set $e=\{v_1,\dots,v_r\}\subset V$, we have $\mathbb{P}(e\subset X)=\mu(y_{v_1},\dots,y_{v_r})$. As the vectors $y_{v_1},\dots,y_{v_r}$ satisfy the required conditions of Lemma~\ref{lemma:geometry}, we can write that 
$$\frac{1}{2^r}+c_1 \sum_{1\leq i<j\leq r} \langle y_{v_i},y_{v_j}\rangle\leq \mathbb{P}(e\subset X)\leq \frac{1}{2^r}+c_2 \sum_{1\leq i<j\leq r} \langle y_{v_i},y_{v_j}\rangle,$$
where $c_1,c_2>0$ are suitable constants depending only on $r$. Therefore, if $e$ is an edge of $H$, then $$\mathbb{P}(e\subset X)\geq \frac{1}{2^r}+\frac{c_1\alpha}{\sqrt{2r\Delta}},$$
and so
\begin{equation}\label{equ:expected_edges}
    \mathbb{E}(e(X))\geq e(H)\left(\frac{1}{2^r}+\frac{c_1\alpha}{\sqrt{2r\Delta}}\right).
\end{equation}
On the other hand, $\mathbb{E}\binom{|X|}{r}$ is equal to the expected number of $r$-element sets contained in $X$, so 
$$\mathbb{E}(\disc(X))\geq e(H)\cdot \left(\frac{1}{2^r}+\frac{c_1\alpha}{\sqrt{2r\Delta}}\right)-p\sum_{\{v_1,\dots,v_r\}\in V^{(r)}}\left(\frac{1}{2^r}+c_2\sum_{1\leq i<j\leq r}\langle y_{v_i},y_{v_j}\rangle\right).$$
The terms containing $\frac{1}{2^r}$ cancel, so we get
$$\mathbb{E}(\disc(X))\geq\frac{c_1\alpha e(H)}{\sqrt{2r\Delta}}-c_2p\sum_{\{v_1,\dots,v_r\}\in V^{(r)}}\sum_{1\leq i<j\leq r}\langle y_{v_i},y_{v_j}\rangle.$$
Here, one can write
$$\sum_{\{v_1,\dots,v_r\}\in V^{(r)}}\sum_{1\leq i<j\leq r}\langle y_{v_i},y_{v_j}\rangle=\binom{n-2}{r-2}\sum_{\{v,v'\}\in V^{(2)}}\langle y_{v},y_{v'}\rangle=\binom{n-2}{r-2}\sum_{u\in V} \sum_{\{v,v'\}\in V^{(2)}}y_{v}(u)y_{v'}(u).$$
Now fix some $u\in V$, and let us bound  $\sum_{\{v,v'\}\in V^{(2)}}y_{v}(u)y_{v'}(u)$, which in particular is bounded by $\sum_{\{v,v'\}\in V^{(2)}}x_{v}(u)x_{v'}(u)$. There are at most $(r\Delta)^2$ pairs $\{v,v'\}$ such that both $\{u,v\}$ and $\{u,v'\}$ are contained in some edge of $H$. If $v\neq u$ and $v'\neq u$, each such pair contributes $\frac{\alpha^2}{2r\Delta}$ to the second sum. Also, there are at most $2r\Delta$ pairs $\{v,v'\}$ such that $v=u$ or $v'=u$, and both $\{u,v\}$ and $\{u,v'\}$ are contained in some edge. Each such pair contributes $\frac{\alpha}{\sqrt{2r\Delta}}$ to the sum. Hence,
\begin{equation*}
    \sum_{\{v,v'\}\in V^{(2)}}y_{v}(u)y_{v'}(u)\leq (r\Delta)^2\cdot \frac{\alpha^2}{2r\Delta}+2r\Delta\cdot \frac{\alpha}{\sqrt{2r\Delta}}\leq 4r\Delta\alpha^2,
\end{equation*}
where the last inequality holds by our assumption that $\Delta$ is sufficiently large with respect to $\alpha$. From this, we get
\begin{equation}\label{equ:scalar_sum}
\sum_{\{v,v'\}\in V^{(2)}}\langle y_{v},y_{v'}\rangle\leq 4r\Delta\alpha^2 n,
\end{equation}
and we can write
$$\mathbb{E}(\disc(X))\geq \frac{c_1\alpha e(H)}{\sqrt{2r\Delta}}-c_2p\binom{n-2}{r-2}\cdot  (4r\Delta\alpha^2 n)\geq \frac{c_3\alpha e(H)}{\sqrt{\Delta}}-\frac{c_4\alpha^2 e(H)\Delta}{n}$$
with suitable $c_3,c_4>0$ only depending on $r$. If $\Delta< n^{2/3}$, there is a choice for $\alpha$ depending only on $r$ such that the right-hand-side is at least $\frac{c_3\alpha}{2}\cdot \frac{e(H)}{\sqrt{\Delta}}$. Hence, we have $\mathbb{E}(\disc(X))\geq c_5\frac{e(H)}{\sqrt{\Delta}}$ with some constant $c_5>0$ depending only on $r$. Therefore, there is a choice for the random unit vector $\mathbf{w}$ such that the resulting set satisfies the required conditions of (i).

Now let us turn to the proof of (ii). Let $Y=V\setminus X$, then by symmetry, i.e.\ by noting that $X(\mathbf{w})=Y(-\mathbf{w})$ with probability 1, we have $\mathbb{E}(e(X))=\mathbb{E}(e(Y))$. Recall that by (\ref{equ:expected_edges}), we have $\mathbb{E}(e(X))=\mathbb{E}(e(Y))\geq e(H)(\frac{1}{2^r}+\frac{c_1\alpha}{\sqrt{\Delta}})$. It remains to bound the expectation of $||X|-|Y||=|2|X|-n|$. By convexity,
$$(\mathbb{E}(|2|X|-n|))^2\leq \mathbb{E}((2|X|-n)^2)=\mathbb{E}(n^2-4|X|n+4|X|^2).$$
For every vertex $v$, $\mathbb{P}(v\in X)=\frac{1}{2}$, so $\mathbb{E}(|X|)=n/2$. Furthermore, for every pair of distinct vertices $\{v,v'\}$, we have $\mathbb{P}(v,v'\in X)=\mu(y_v,y_{v'})\leq \frac{1}{4}+c_6 \langle y_v,y_{v'}\rangle$ for some constant $c_6>0$ by Lemma \ref{lemma:geometry} applied with $r=2$. Hence,
$$\mathbb{E}(|X|^2)=\sum_{v,v'\in V}\mathbb{P}(v,v'\in X)\leq \frac{n}{2}+2\sum_{\{v,v'\}\in V}\left[\frac{1}{4}+c_6\langle y_v,y_{v'}\rangle\right]\leq \frac{n^2}{4}+\frac{n}{4}+8c_6 r\Delta \alpha^2n,$$
where the last inequality follows by (\ref{equ:scalar_sum}). In conclusion,
$$\mathbb{E}((2|X|-n)^2)\leq n+32c_6r\Delta \alpha^2n,$$
and thus $\mathbb{E}(||X|-|Y||)\leq c_7 \sqrt{\Delta n}$ with some $c_7=c_7(r)>0$. 

Putting everything together,
$$\mathbb{E}(e(X)+e(Y)-\Delta||X|-|Y||)\geq \frac{e(H)}{2^{r-1}}+ \frac{2c_1\alpha e(H)}{\sqrt{\Delta}}-c_7 \Delta^{3/2}\sqrt{n}.$$
Assuming $C>0$ is sufficiently large, the condition $e(H)>C\Delta^2\sqrt{n}$ ensures that the right hand side is at least $e(H)\cdot (\frac{1}{2^{r-1}}+\frac{c_1\alpha}{\sqrt{\Delta}})$. But then there is a choice for the random unit vector $w$ such that the partition $X\cup Y$ satisfies the requirements of (ii).
\end{proof}

\begin{proof}[Proof of Theorem \ref{thm:bisection}]
Let $C,c'$ be the constants guaranteed by Lemma \ref{lemma:disc_maxdeg}, (ii). Setting $\varepsilon=\frac{1}{rC}$, the condition $d\leq \varepsilon n^{1/2}$ implies $\frac{dn}{r}=e(H)>Cd^2\sqrt{n}$. Then there exists a partition $X\cup Y$ of $V(H)$ such that 
$$e(X)+e(Y)-d||X|-|Y||\geq e(H)\left(\frac{1}{2^r}+\frac{c'}{\sqrt{d}}\right).$$Without loss of generality, we may assume that $|X|\leq |Y|$. Let $S$ be an arbitrary $\lfloor n/2\rfloor -|X|$ element subset of $Y$, and let $X_0=X\cup S$ and $Y_0=Y\setminus S$. Then $X_0\cup Y_0$ is a bisection of $H$, and 
$$e(X_0)+e(Y_0)\geq e(X)+e(Y)-|S|d= e(X)+e(Y)-d\left\lfloor\frac{|Y|-|X|}{2}\right\rfloor\geq e(H)\left(\frac{1}{2^r}+\frac{c'}{\sqrt{d}}\right).$$
Finally,
$$e(X,Y)=e(H)-e(X)-e(Y)\leq e(H)\left(1-\frac{1}{2^r}-\frac{c'}{\sqrt{d}}\right),$$
finishing the proof.
\end{proof}

 Straightforward  modifications of Lemma \ref{lemma:disc_maxdeg} imply Theorem \ref{thm:mixed} as well. We omit the details.
 
\section{Positive discrepancy}

In this section, we prove Theorem \ref{thm:main_pos_disc}, which we restate here for the reader's convenience.  
\begin{theorem}\label{thm:pos_disc}
Let $H$ be an $r$-uniform hypergraph on $n$ vertices of average degree $d<n^{2/3}$. Then
$$\disc^{+}(H)=\Omega_r(d^{1/2}n).$$
\end{theorem}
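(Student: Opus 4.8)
The plan is to split on the global shape of $H$ once its high-degree vertices have been quarantined, using a randomized construction in the spirit of Lemma~\ref{lemma:disc_maxdeg} in each case. We may assume $n$ is large in terms of $r$. Fix a small constant $\alpha=\alpha(r)>0$ --- smaller than $0.1$, than the threshold $\alpha_0$ furnished by Lemma~\ref{lemma:geometry}, than $1/(8r)$, and than one further constant chosen below --- and set $T=8rd$. Let $B=\{v\in V(H):\deg_H(v)>T\}$ and $L=V(H)\setminus B$; since $\sum_v\deg_H(v)=dn$, we have $|B|<dn/T=n/(8r)$, so $|L|\ge(1-\frac{1}{8r})n\ge n/2$. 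Write $E_0$ for the edges of $H$ contained in $L$ and $E_{\mathrm{high}}=E(H)\setminus E_0$ for the edges meeting $B$. I would handle separately the cases $|E_0|\ge e(H)/2$ and $|E_0|<e(H)/2$, producing in each a random set $X\subseteq V(H)$ with $\mathbb{E}[\disc(X)]=\Omega_r(\sqrt d\,n)$; this gives $\disc^{+}(H)=\Omega_r(\sqrt d\,n)$.

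When $|E_0|\ge e(H)/2$, I would rerun the proof of Lemma~\ref{lemma:disc_maxdeg}(i), but encoding \emph{only} the edges of $E_0$ into the vectors, so that no vertex ever sees more than $T$ relevant edges even though $H$ may be far from regular: put $x_v(v)=1$, put $x_v(u)=\alpha/\sqrt{2rT}$ if $u\ne v$ and $u,v$ lie together in an edge of $E_0$, and put $x_v(u)=0$ otherwise (so $x_v$ is the indicator vector of $\{v\}$ when $v\in B$). Then $1\le||x_v||_2^2\le 1+(r-1)\alpha^2/(2r)<2$, so the normalizations $y_v=x_v/||x_v||_2$ satisfy $0\le\langle y_u,y_v\rangle\le\alpha$ for all $u\ne v$, with $\langle y_u,y_v\rangle\ge\alpha/\sqrt{2rT}$ whenever $u,v$ share an edge of $E_0$. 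Taking $\mathbf{w}$ a uniform random unit vector and $X=\{v:\langle y_v,\mathbf{w}\rangle\ge 0\}$, Lemma~\ref{lemma:geometry} gives $\mathbb{P}[f\subseteq X]\ge 2^{-r}+c_1\binom{r}{2}\alpha/\sqrt{2rT}$ for $f\in E_0$, $\mathbb{P}[f\subseteq X]\ge 2^{-r}$ for $f\in E_{\mathrm{high}}$ (all pairwise inner products being nonnegative), and $\mathbb{P}[\{v_1,\dots,v_r\}\subseteq X]\le 2^{-r}+c_2\sum_{i<j}\langle y_{v_i},y_{v_j}\rangle$ for any $r$-set. As in the proof of Lemma~\ref{lemma:disc_maxdeg}(i), the $2^{-r}$-terms cancel in $\mathbb{E}[\disc(X)]=\mathbb{E}[e(X)]-p\,\mathbb{E}\binom{|X|}{r}$, leaving $\mathbb{E}[\disc(X)]\ge\Omega_r(\alpha|E_0|/\sqrt d)-O_r(\alpha^2d^2+\alpha d^{3/2})$; the error term comes from the off-diagonal scalar products, whose total is $O_r(d\alpha^2 n+\alpha\sqrt d\,n)$ since the relevant ``$E_0$-degrees'' are at most $T=\Theta_r(d)$ and $p\binom{n-2}{r-2}=O_r(e(H)/n^2)$. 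As $|E_0|\ge e(H)/2$, the main term is $\Omega_r(\alpha\sqrt d\,n)$, and since $d<n^{2/3}$ forces $d^{3/2}<n$, taking $\alpha$ a small enough constant (depending on $r$) makes the error at most half the main term, so $\mathbb{E}[\disc(X)]=\Omega_r(\sqrt d\,n)$.

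When $|E_0|<e(H)/2$, so that $|E_{\mathrm{high}}|>e(H)/2$, it instead pays to keep all of $B$ on one side. Take $X=B\cup S$ where $S\subseteq L$ contains each vertex independently with probability $1/2$. An edge with exactly $j$ vertices in $B$ lies in $X$ with probability exactly $2^{-(r-j)}$, which is $2^{-r}$ if $j=0$ and at least $2^{1-r}$ if $j\ge1$; hence $\mathbb{E}[e(X)]\ge 2^{-r}|E_0|+2^{1-r}|E_{\mathrm{high}}|=2^{-r}(e(H)+|E_{\mathrm{high}}|)>\frac32\cdot 2^{-r}e(H)$. For the subtracted term, $\mathbb{E}\binom{|X|}{r}=\sum_{j\ge0}2^{j-r}\binom{|B|}{j}\binom{|L|}{r-j}$, and a routine estimate (expanding via Vandermonde and using $|B|/n\le\frac{1}{8r}$) yields $\mathbb{E}\binom{|X|}{r}\le\frac{13}{10}\cdot 2^{-r}\binom{n}{r}$, i.e.\ $p\,\mathbb{E}\binom{|X|}{r}\le\frac{13}{10}\cdot 2^{-r}e(H)$. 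Therefore $\mathbb{E}[\disc(X)]\ge(\frac32-\frac{13}{10})\,2^{-r}e(H)=\frac15\cdot 2^{-r}e(H)=\Omega_r(dn)\ge\Omega_r(\sqrt d\,n)$, using $d\ge 1$.

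The crux, I expect, is exactly what forces the case split: the normalization $p$ in $\disc(U)=e(U)-p\binom{|U|}{r}$ remembers all of $e(H)$, so one cannot simply delete the high-degree vertices and invoke Lemma~\ref{lemma:disc_maxdeg}(i) --- the discarded edges $E_{\mathrm{high}}$ resurface as a $\Theta(dn)$-size deficit in the term $p\binom{|X|}{r}$, which would swamp the $\Theta(\sqrt d\,n)$-size surplus produced by the half-space construction (and $\sqrt d\,n\ll dn$). Keeping every vertex but encoding only the $E_0$-edges prevents $\binom{|X|}{r}$ from being inflated in the first case, while keeping $B$ wholesale in the second converts the $\ge e(H)/2$ edges meeting $B$ from a liability into a full factor-$2$ gain in inclusion probability; the two mechanisms differ enough that no single construction seems to cover both regimes. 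The remaining work is routine error bookkeeping, parallel to the proof of Lemma~\ref{lemma:disc_maxdeg}.
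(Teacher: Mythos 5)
Your proof is correct and takes a genuinely different route from the paper. The paper also isolates high-degree vertices, but it forms the subhypergraph $H'$ by deleting every edge meeting them and then applies Lemma~\ref{lemma:disc_maxdeg}(i) to $H'$. The difficulty you correctly identify in your final paragraph --- that the normalizing density $p$ changes when passing to $H'$, so a large $\disc_{H'}(U)$ does not directly give a large $\disc_H(U)$ --- is handled in the paper by a separate auxiliary result (Lemma~\ref{lemma:maxdeg}), which shows $\disc^{+}(H)\ge c_1|\partial(X)|$ whenever $X$ is a set of vertices of degree $\ge Cd$; the paper invokes this lemma in two of the three branches of its case analysis to absorb the density-shift error. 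Your argument bypasses Lemma~\ref{lemma:maxdeg} entirely. When $|E_0|\ge e(H)/2$, you re-run the half-space construction with modified vectors that encode only $E_0$-edges (so that high-degree vertices get trivial indicator vectors), computing $\mathbb{E}[\disc(X)]$ directly in $H$: since all pairwise inner products remain nonnegative, edges of $E_{\mathrm{high}}$ still land inside $X$ with probability $\ge 2^{-r}$, and these base-line $2^{-r}$ terms cancel exactly against $p\cdot 2^{-r}\binom{n}{r}$, which is precisely what prevents the discarded edges from resurfacing as a deficit. When $|E_{\mathrm{high}}|>e(H)/2$, your elementary computation with $X=B\cup(\text{random half of }L)$ directly yields $\Omega_r(dn)$. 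I checked the bookkeeping: the error bound $O_r(\alpha d^{3/2}+\alpha^2 d^2)$ against a main term $\Omega_r(\alpha\sqrt{d}\,n)$ in case one is dominated precisely because $d<n^{2/3}$ forces $d^{3/2}<n$, and the Vandermonde/Hoeffding estimate $\mathbb{E}\binom{|X|}{r}\le\frac{13}{10}\cdot 2^{-r}\binom{n}{r}$ in case two holds since $\mathbb{E}[2^{|R\cap B|}]\le(1+1/(4r))^r<e^{1/4}<1.3$. What the paper's route buys is a clean reusable lemma (which it uses twice) and the ability to cite Lemma~\ref{lemma:disc_maxdeg} as a black box; what your route buys is a shorter, more self-contained argument with only two cases, in which the cancellation mechanism is more visible. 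The one implicit assumption, which you share with the paper, is $d\ge 1$ so that $\Omega_r(dn)\ge\Omega_r(\sqrt{d}\,n)$ in case two; for $d<1$ a separate independent-set argument (as mentioned after Theorem~\ref{thm:main_disc}) is needed.
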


Note that in case the the maximum degree of $H$ is not much larger than its average degree, Lemma \ref{lemma:disc_maxdeg} immediately implies Theorem \ref{thm:pos_disc}. We show that the general case can be reduced to this special subcase. It will be useful to define the discrepancy of collections of sets as well. Given $s_1,\dots,s_k$ such that $s_1+\dots+s_k=r$ and disjoint sets $U_1,\dots,U_k$, define
$$e_{s_1,\dots,s_k}(U_1,\dots,U_k)=\# \{ e\in E(H):|e\cap U_i|=s_i\text{ for }i\in [k]\}$$
and
$$\disc_{s_1,\dots,s_k}(U_1,\dots,U_k)=e_{s_1,\dots,s_k}(U_1,\dots,U_k)-p\binom{|U_1|}{s_1}\dots\binom{|U_k|}{s_k}.$$
Note that if $U$ and $U'$ are disjoint, then
$$\disc(U\cup U')=\sum_{i=0}^{r}\disc_{i,r-i}(U,U').$$
Finally, let $\partial(X)$ be the set of all edges of $H$ that have a vertex in $X$. Then $$|\partial(X)|=\sum_{i=0}^{r-1}e_{r-i,i}(X,X^{c}),$$ where $X^c=V(H)\setminus X$ is the complement of $X$.  
Next, we prove a lemma which is used to handle large degree vertices of $H$.

\begin{lemma}\label{lemma:maxdeg}
		For every $r\geq 2$, there exist $c_1,C>0$ such that the following holds. Let $H$ be an $r$-uniform hypergraph on $n$ vertices of average degree $d$. Let $X\subset V(H)$ such that the degree of every vertex in $X$ is at least $Cd$. Then 
		$$\disc^{+}(H)\geq c_1 |\partial(X)|.$$
\end{lemma}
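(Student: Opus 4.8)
The plan is to reduce to Lemma \ref{lemma:disc_maxdeg}(i) by restricting attention to a carefully chosen induced subhypergraph where all degrees are controlled, while tracking the contribution of the high-degree vertices through the boundary $\partial(X)$. First I would separate the edges of $H$ into two types relative to $X$: those entirely inside $X^c=V(H)\setminus X$, and those meeting $X$, the latter being exactly $\partial(X)$ in number. The rough idea is that each of the $|\partial(X)|$ edges touching $X$ is an edge whose presence was ``forced'' by the high degrees on $X$; since $|X|$ is necessarily small (as $\sum_{v\in X}\deg(v)\le r\cdot e(H)=rdn$ forces $|X|\le rn/C$, which is a small fraction of $n$ once $C=C(r)$ is large), these edges give us extra room to build a set of large discrepancy.

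The main step is to construct a good test set $U$. I would take $U$ of the form $U_0 \cup W$, where $U_0 \subseteq X^c$ is chosen to capture the $\partial(X)$ edges and $W$ is a set chosen to make the overall discrepancy large. Concretely, one natural approach: consider the subhypergraph $H' = H[X^c]$ together with all edges of $\partial(X)$ ``projected'' onto $X^c$ (replacing each such edge $e$ by $e\cap X^c$, which is a set of size between $0$ and $r-1$) — this is where the non-uniform discrepancy machinery $\disc_{s_1,\dots,s_k}$ introduced just before the lemma statement comes in. Alternatively, and perhaps more cleanly: since $|\partial(X)|$ edges each contain at least one vertex of $X$, and $X$ is small, one can argue that either $|\partial(X)|$ is small compared to $e(H)$ — in which case the bound follows from applying Lemma \ref{lemma:disc_maxdeg}(i) to a bounded-degree subhypergraph obtained by deleting $X$ (note deleting $X$ removes at most $|\partial(X)|$ edges, and the max degree in $H[X^c]$ could still be large, so one must iterate the deletion of high-degree vertices) — or $|\partial(X)|$ is a constant fraction of $e(H)$, in which case we must directly exhibit a set of discrepancy $\Omega(|\partial(X)|)$. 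I expect the cleanest route is an iterative/greedy deletion argument: repeatedly delete vertices of degree $\ge Cd$ from $H$; each deletion removes at most the current max degree many edges, and one shows the total number of edges removed is at most, say, $\frac{1}{2}e(H)$ (or else we stop early and win); what remains is a subhypergraph $H^\ast$ with $e(H^\ast)\ge \Omega(e(H))$ and maximum degree $O(d)$, so Lemma \ref{lemma:disc_maxdeg}(i) applies, giving $\disc^+(H^\ast)=\Omega(e(H^\ast)/\sqrt{d})=\Omega(\sqrt{d}n)$, and discrepancy in $H^\ast$ transfers to $H$ up to controllable error since $H^\ast$ is an induced subhypergraph on a large vertex set.

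The delicate point — and the main obstacle — is making the accounting match $|\partial(X)|$ rather than just $e(H)$: the statement is scale-sensitive, asserting $\disc^+(H)\gtrsim|\partial(X)|$, which is stronger than $\Omega(\sqrt d n)$ when $|\partial(X)|$ is large. To handle this, I would argue that when $|\partial(X)|$ is large (say at least $\varepsilon e(H)$ for small $\varepsilon=\varepsilon(r)$), the high-degree vertices themselves create the discrepancy: consider a random subset obtained by, roughly, including all of $X^c$-vertices with some probability and using the fact that the many edges touching $X$ concentrate unexpectedly. Most efficiently, one can take $U = V(H)$ minus a random small subset, or use a weighting/averaging argument over sets $U$ of the form $X^c \cup W$ with $W$ a random subset of $X$: the expected number of edges of $\partial(X)$ surviving in $U$, minus the expected ``random'' count $p\binom{|U|}{r}$ computed via the $\disc_{s_1,\dots,s_k}$ decomposition, yields a gain proportional to $|\partial(X)|$ because each boundary edge contributes positively while the penalty term only shifts by $O(|X|^{r-1}\cdot\text{stuff})=o(|\partial(X)|)$ given $|X|$ is small and $d<n^{2/3}$. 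Getting the parameters of this random choice right, and verifying the error term from $p\binom{|U|}{r}$ genuinely is lower order (this is where $d<n^{2/3}$ and $|X|\le rn/C$ are both used), is the part requiring the most care; the uniformity-$r$ dependence in the binomial cross-terms is routine but must be handled via the $\disc_{s_1,\dots,s_k}$ formalism set up immediately before the lemma.
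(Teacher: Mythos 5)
Your proposal circles the right machinery (the $\disc_{i,r-i}$ decomposition, the observation that $|X|\le n/C$ is small, and a randomized test set $U$), but the specific construction you suggest has the wrong sign, and your fallback route provably cannot give the stated bound.

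The concrete gap is in the randomization. You propose averaging over sets of the form $U=X^c\cup W$ with $W$ a random subset of $X$. If $W$ is, say, a uniformly random half of $X$, then for an edge $e$ with exactly $i\ge 1$ vertices in $X$ one has $\Pr(e\subset U)\approx 2^{-i}<1$, while an edge entirely inside $X^c$ is retained with probability $1$. So boundary edges are \emph{under}-represented relative to interior edges, and a short computation using $\sum_{i=0}^r\disc_{i,r-i}(X,X^c)=0$ gives $\mathbb{E}\,\disc(U)=\sum_{i\ge 1}(\gamma_i-1)\disc_{i,r-i}(X,X^c)$ with all coefficients $\gamma_i-1<0$; since $e_{i,r-i}(X,X^c)$ dominates the $p\binom{|X|}{i}\binom{|X^c|}{r-i}$ term once $C$ is large (indeed $|\partial(X)|\ge Cd|X|/r$ while the penalty is $O(d|X|)$), this expectation is \emph{negative} of order $|\partial(X)|$. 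That gives a lower bound on $\disc^-(H)$, not $\disc^+(H)$. The paper does the opposite: it takes $U=X\cup\mathbf{Y}$ with $\mathbf{Y}$ a random half of $X^c$, so every boundary edge has at most $r-1$ vertices at risk of being dropped and is retained with probability $\ge\beta_{r-1}\approx 2^{-(r-1)}$, while a typical $r$-set survives with probability $\approx 2^{-r}$; boundary edges are \emph{over}-represented, and the coefficients $\beta_i-\beta_r$ in the resulting identity are nonnegative, with the convexity $\beta_i-\beta_r\ge(r-i)(\beta_{r-1}-\beta_r)$ converting the sum into $\sum_{v\in X}\deg(v)\ge Cd|X|\ge|\partial(X)|$ up to constants. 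Your other suggested route (iterative deletion of high-degree vertices followed by Lemma~\ref{lemma:disc_maxdeg}(i)) is, as you yourself flag, capped at $\Omega(\sqrt{d}\,n)$ and cannot reach $\Omega(|\partial(X)|)$ when the latter is a constant fraction of $e(H)$; moreover the paper's proof of this lemma does not invoke Lemma~\ref{lemma:disc_maxdeg} at all and needs no hypothesis like $d<n^{2/3}$ (which you lean on but which is absent from the statement), so reducing to it is both unnecessary and insufficient.
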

	
	\begin{proof}
		Let $p=\frac{d}{\binom{n}{r-1}}=\Theta(\frac{d}{n^{r-1}})$ be the density of $H$. Let $s=|X^c|$, and let $\alpha\in (0,1)$ be specified later, depending only on $\delta$ and $r$. Furthermore, let $b=\lfloor s/2\rfloor$, and let $\mathbf{Y}$ be a random $b$ element subset of $X^c$, chosen from the uniform distribution. We have
		$$\disc(X\cup \mathbf{Y})=\sum_{i=0}^{r}\disc_{r-i,i}(X,\mathbf{Y}).$$
		Here, 
        $$\mathbb{E}(\disc_{r-i,i}(X,\mathbf{Y}))=\left(\prod_{j=0}^{i-1}\frac{b-j}{s-j}\right)\cdot\disc_{r-i,i}(X,X^c).$$ Writing $\beta_i$ for the coefficient of $\disc_{r-i,i}(X,X^c)$ in the previous line, we thus get
		\begin{align*}
			\mathbb{E}(\disc(X\cup \mathbf{Y}))&=\sum_{i=0}^{r}\beta_i\disc_{r-i,i}(X,X^c)=\sum_{i=0}^{r-1}(\beta_i-\beta_r)\disc_{r-i,i}(X,X^c),
		\end{align*}
		where we used $0=\disc(X\cup X^c)=\sum_{i=0}^{r}\disc_{r-i,i}(X,X^c)$ in the second equality. Here, the right hand side can be  written as
		\begin{equation}
		\label{equ:cc}
				\sum_{i=0}^{r-1}(\beta_i-\beta_r)e(X,X^c)-p\left(\sum_{i=0}^{r-1}(\beta_i-\beta_r)\binom{|X|}{r-i}\binom{|X^c|}{i}\right)
		\end{equation}
		Next, we use that $\beta_i-\beta_r\geq (r-i)(\beta_{r-1}-\beta_r)$, which easily follows from the fact that $\beta_i\geq 2\beta_{i+1}$ for every $i=0,\dots,r-1$. Also, $\binom{|X|}{r-i}\binom{|X^c|}{i}\leq |X| n^{r-1}$ for $i=0,\dots,r-1$. Hence, we can lower bound (\ref{equ:cc}) as
   $$(\beta_{r-1}-\beta_r)\left(\sum_{i=0}^{r-1}(r-i)e_{r-i,i}(X,X^c)\right)-cd|X|$$
   with a suitable $c=c(r)>0$. But observe that $\sum_{i=0}^{r-1}(r-i)e_{r-i,i}(X,X^c)$ is just the sum of degrees of the vertices of $X$, which is at least $Cd|X|$ by our assumption on $X$. Hence, choosing $C>2c$, we conclude that
   $$\mathbb{E}(\disc(X\cup \mathbf{Y}))\geq \frac{\beta_{r-1}-\beta_r}{2}\left(\sum_{i=0}^{r-1}(r-i)e_{r-i,i}(X,X^c)\right)\geq \frac{\beta_{r-1}-\beta_r}{2} |\partial(X)|,$$
   so $c_1= \frac{\beta_{r-1}-\beta_r}{2}$ suffices.
	\end{proof}

\begin{proof}[Proof of Theorem \ref{thm:pos_disc}]
Let $\Delta=Cd$, where $C$ is given by Lemma \ref{lemma:maxdeg}. Let $X$ be the set of vertices of $H$ of degree more than $\Delta$, and let $H'$ be the hypergraph we get by removing every edge of $H$ having a vertex in $X$. Note that the maximum degree of $H'$ is at most $\Delta$, and $e(H')=e(H)-|\partial(X)|$. Hence, in case $e(H')\leq e(H)/2$, we have $|\partial(X)|\geq e(H)/2$, which gives $\disc^{+}(H)\geq \Omega_r(e(H))$ by Lemma \ref{lemma:maxdeg}, and we are done. Hence, we may assume that $e(H')\geq e(H)/2$.

By Lemma \ref{lemma:disc_maxdeg}, $$\disc^+(H')\geq \frac{c_2 e(H')}{\sqrt{\Delta}}\geq c_3 \sqrt{d} n$$ with some $c_2,c_3>0$ depending only on $r$. 

Now let us bound the discrepancy of $H$. If $|\partial(X)|\geq  \frac{1}{2}\disc^{+}(H')$, then  Lemma \ref{lemma:maxdeg} states that $$\disc^+(H)\geq c_1 |\partial(X)|\geq \frac{c_1 c_3}{2}\sqrt{d}n,$$ so we are done. Hence, we may assume that $|\partial(X)|\leq \frac{1}{2}\disc^{+}(H')$. Let $U\subset V(H)$ be such that $\disc^{+}(H')=\disc_{H'}(U)$, and let $p'=e(H')/\binom{n}{r}$ be the density of $H'$. Then
$$\disc_H(U)-\disc_{H'}(U)=e_H(U)-e_{H'}(U)-(p-p')\binom{|U|}{r}\geq -(p-p')\binom{n}{r}=-|\partial(X)|,$$
hence
$$\disc_H(U)\geq \disc^+(H')-|\partial(X)|\geq \frac{1}{2}\disc^+(H')\geq \frac{c_3}{2}\sqrt{d}n,$$
so we are done in this case as well.
\end{proof}

\section{Discrepancy}


In this section, we prove Theorem \ref{thm:main_disc}. A key preliminary tool in our proof is to observe that the theorem of Bollob\'as and Scott \cite{BS06} concerning the product of the positive and negative discrepancy of hypergraphs also extends to multi-hypergraphs. We now state the version of the theorem that is needed for our purposes. For completeness, the full proof of this theorem is added to  the Appendix. 

\begin{theorem}[Multi-hypergraph version of Theorem 14, \cite{BS06}]
\label{thm:BollobasScott main thm}Let $H$ be an $r$-uniform multi-hypergraph
of order $n$ with $p{\binom{n}{r}}$ edges, counted with multiplicities.
Suppose that $p$ satisfies the condition $\frac{1}{2n} \leq p \leq 1 - \frac{1}{2n}$. Then  
$$\disc^{+}\left(H\right)\cdot\disc^{-}\left(H\right)\geq\Omega\left(p\left(1-p\right)n^{r+1}\right).$$
\end{theorem}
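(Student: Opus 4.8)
\emph{Plan.} This is the multi-hypergraph analogue of Theorem~14 of \cite{BS06}, and the plan is to re-run Bollob\'as and Scott's argument, checking that it is insensitive to multiplicities. Write $m_S$ for the multiplicity of the $r$-set $S$ and set $M_S=m_S-p$, so that $\sum_S M_S=0$. The one place where the multi-setting genuinely differs from the simple one is the basic ``energy'' bound: since each $m_S$ is a non-negative integer, $m_S^2\geq m_S$, hence
$$\sum_{S\in\binom{[n]}{r}}M_S^2=\sum_S m_S^2-p^2\binom nr\ \geq\ \sum_S m_S-p^2\binom nr=p(1-p)\binom nr,$$
which replaces the identity $\sum_S M_S^2=p(1-p)\binom nr$ valid when $m_S\in\{0,1\}$; only the inequality is used. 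I also note that the complementation trick $m_S\mapsto 1-m_S$ (used in the simple case to reduce $p>\tfrac12$ to $p<\tfrac12$) creates negative multiplicities and so is unavailable; this is harmless, since every quantity below is symmetric under $p\leftrightarrow 1-p$ and the argument is simply run on both sides.

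\emph{Step 1: reduction to a nearly-regular, spread hypergraph.} Let $d$ denote the average degree, so $d=p\binom{n-1}{r-1}=\Theta_r(pn^{r-1})$, and let $X$ be the set of vertices of degree at least $Cd$, where $C$ is the constant of Lemma~\ref{lemma:maxdeg} (whose proof does not use simplicity). If $|\partial(X)|\geq\tfrac12 e(H)$, then Lemma~\ref{lemma:maxdeg} gives $\disc^+(H)=\Omega_r(e(H))=\Omega_r(pn^r)$; on the other hand $|X|\leq n/C$, so $U=V(H)\setminus X$ satisfies $|U|\geq(1-1/C)n$ and $e(U)\leq\tfrac12 e(H)<p\binom{|U|}{r}$, which forces $\disc^-(H)=\Omega_r(pn^r)$ as well, and then $\disc^+\disc^-=\Omega_r(p^2n^{2r})=\Omega_r(p(1-p)n^{r+1})$ because $pn^{r-1}\geq\tfrac12$ in the stated range. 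In the remaining case $|\partial(X)|<\tfrac12 e(H)$, we delete the edges meeting $X$; this leaves a hypergraph with $\Delta\leq Cd$ and at least $\tfrac12 e(H)$ edges, and, exactly as in the proof of Theorem~\ref{thm:pos_disc}, a lower bound on the product for this subhypergraph transfers back to $H$ with a constant-factor loss. Running the analogous cleaning on codegrees of $(r-1)$-sets, $(r-2)$-sets, and so on, we may further assume that $H$ is \emph{spread}: all of its low-order degree profiles are as flat as in a random hypergraph of the same density.

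\emph{Step 2: the core estimate.} A plain second moment is not enough. For $\mathbf U\subseteq V(H)$ obtained by keeping each vertex independently with probability $\tfrac12$ one has $\mathbb E[\disc(\mathbf U)]=0$, and, once the lower-order cross terms are controlled using the flatness from Step~1, $\mathbb E[\disc(\mathbf U)^2]=\Omega_r\!\big(\sum_S M_S^2\big)=\Omega_r(p(1-p)n^r)$; since $\disc(\mathbf U)\in[-\disc^-(H),\disc^+(H)]$ almost surely with mean $0$, this already gives $\disc^+\disc^-\geq\mathbb E[\disc(\mathbf U)^2]=\Omega_r(p(1-p)n^r)$ --- a factor $n$ short. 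To recover the sharp exponent one argues, following \cite{BS06}, by producing a \emph{structured} test set rather than a random one. Passing to the symmetric deviation form $\beta$ (so that $\beta(\mathds{1}_U,\dots,\mathds{1}_U)=r!\,\disc(U)+O_r(d)$, the error being the gap between $\binom{|U|}{r}$ and $|U|^r/r!$), the energy bound reads $\|\beta\|_F^2=\Theta_r(p(1-p)n^r)$, which forces a unit eigenvector --- for $r\geq 3$, a unit rank-one direction of the form --- on which $|\beta|$ is $\Omega(\sqrt{p(1-p)n})$; because Step~1 makes this direction delocalized, rounding it to a $0/1$ vector preserves a constant proportion of the form at the scale $\|\mathds{1}_U\|_2^2\asymp n$, producing a set with $|\disc(U)|=\Omega_r(\sqrt{p(1-p)}\,n^{(r+1)/2})$. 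A short case split on whether this witness lies on the positive or the negative side of the spectrum, combined with the weak product bound above, completes the proof.

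\emph{Main obstacle.} The crux is Step~2: the exponent $r+1$ is invisible to any ``variance of a random test set'' computation, which is intrinsically $\Theta_r(p(1-p)n^r)$, so one is forced into the delocalization-and-rounding argument and must control the loss it incurs --- in particular, rule out the top and bottom of the deviation spectrum cancelling in the rounded vector, which is exactly what the regularization in Step~1 prevents. For $r\geq 3$ there is additional bookkeeping in passing between the tensor, its symmetrization, and genuine induced counts, but this contributes only $O_r(d)$ error terms and does not affect the exponent.
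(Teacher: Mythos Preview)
Your Plan announces that you will ``re-run Bollob\'as and Scott's argument, checking that it is insensitive to multiplicities,'' but the body of the proposal does something entirely different. The paper's proof really is the Bollob\'as--Scott argument verbatim: one introduces the centred weight $w(e)=m(e)-p$, takes a chain of random bipartitions $V=W_r\supset W_{r-1}\supset\cdots\supset W_1$, and pushes the $L^1$ mass of the induced $(r-1)$-weighting down one level at a time using a Khintchine-type inequality (Lemmas~6 and~10 of \cite{BS06}), then finishes with the dichotomy Lemma~11 of \cite{BS06}. The \emph{only} step that sees multiplicities is Lemma~13, where the bound $\sum_i|m(K\cup\{i\})-p|\geq(1-p)d(K)$ and then $d(K)/\sqrt{s(K)}\geq\sqrt{d(K)}$ replace the simple-graph identity; your observation $\sum_S M_S^2\geq p(1-p)\binom nr$ is the Frobenius-level analogue of this, but the paper never uses Frobenius norms.

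Your Steps~1--2 are a different route, and Step~2 has two real gaps. First, the spectral claim: from $\|\beta\|_F^2=\Theta_r(p(1-p)n^r)$ you assert a unit rank-one direction with $|\beta(v,\dots,v)|=\Omega(\sqrt{p(1-p)n})$. For $r=2$ this is the trivial bound $\lambda_{\max}\geq\|\beta\|_F/\sqrt n$, but for $r\geq3$ there is no such inequality for symmetric tensors --- the Frobenius norm can exceed the injective (spectral) norm by a factor $n^{(r-1)/2}$, and the deviation tensor of a random hypergraph is precisely an instance where the gap is this large. Second, even granting a good unit vector $v$, the assertion that ``Step~1 makes this direction delocalized'' so that rounding to $\{0,1\}^n$ loses only a constant is unjustified: bounding the maximum degree (and even all lower-order codegrees) does not prevent the top eigenvector of the centred adjacency from concentrating on $o(n)$ coordinates. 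Finally, even if you produced one set $U$ with $|\disc(U)|=\Omega_r(\sqrt{p(1-p)}\,n^{(r+1)/2})$, this controls only $\max(\disc^+,\disc^-)$; combining it with the second-moment bound $\disc^+\disc^-\geq\Omega_r(p(1-p)n^r)$ does not yield the product $\Omega_r(p(1-p)n^{r+1})$. The Bollob\'as--Scott argument handles exactly this asymmetry via Lemma~11, which converts a large value of $d_{1,r-1}(X,Y)+\alpha\,d(Y)$ into a dichotomy between $\disc^+\geq M/\alpha$ and $\disc^-\geq M\alpha$ for a tunable $\alpha$; nothing in your outline plays this role.
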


Given disjoint sets $X$ and $Y$ and an integer $i\in\{0,\dots,r\}$, recall that we write 
$$\disc_{i,r-i}\left(X,Y\right)=e_{i,r-i}(X,Y)-p\binom{|X|}{i}\binom{|Y|}{r-i}.$$
If the host hypergraph $H$ is not clear from the context, we highlight it by writing $\disc_{i,r-i}(H;X,Y)$ or $e_{i,r-i}(H;X,Y)$ instead.

\begin{lemma}[Multi-hypergraph version of Lemma 9, \cite{BS06}]
\label{lem:BollobasScott lemma}
Let $H$ be an $r$-uniform multi-hypergraph and let $D=\disc(H)$. Then for every pair of disjoint sets $X$ and $Y$ and $i\in [r]$, we have
$$|\disc_{i,r-i}\left(X,Y\right)|\leq r^{2r}D.$$ 
\end{lemma}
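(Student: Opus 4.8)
The goal is to bound $|\disc_{i,r-i}(X,Y)|$ in terms of $D = \disc(H)$ for a multi-hypergraph $H$. This is a standard inclusion-exclusion argument, and the plan is to express $\disc_{i,r-i}(X,Y)$ as a bounded linear combination of discrepancies of sets of the form $\disc(A \cup B)$ where $A \subseteq X$ and $B \subseteq Y$, each of which has absolute value at most $D$.

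The plan is as follows. First I would observe that for a fixed edge-type count, $e_{i,r-i}(X,Y)$ can be recovered from the quantities $e(X' \cup Y')$ over subsets $X' \subseteq X$, $Y' \subseteq Y$ via inclusion-exclusion: the number of $r$-edges meeting $X$ in exactly $i$ vertices and $Y$ in exactly $r-i$ vertices is an alternating sum, over subsets, of the number of $r$-edges contained in $X' \cup Y'$ where $|X'|$ and $|Y'|$ range appropriately. More precisely, one uses the identity that expresses ``exactly'' counts in terms of ``at most'' (or ``contained in'') counts. Concretely, $e_{i,r-i}(X,Y) = \sum_{X'\subseteq X, |X'| = i}\sum_{Y' \subseteq Y, |Y'| = r-i} (\text{signed combination of } e(A\cup B))$, but a cleaner route is to fix the split $(i, r-i)$ and write $e_{i,r-i}(X,Y)$ via the Möbius function on the subset lattice restricted to appropriate sizes. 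Since the same alternating-sum identity holds verbatim for the ``expected'' term $p\binom{|X|}{i}\binom{|Y|}{r-i}$ (both sides being polynomial identities in the indicator variables, or equivalently because $p\binom{m}{r}$ obeys the same inclusion-exclusion as $e(\cdot)$ when we replace the hypergraph by the ``weighted complete'' one), subtracting gives the same identity with $\disc_{i,r-i}(X,Y)$ on the left and a signed sum of $\disc(A \cup B)$ terms on the right.

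The key quantitative step is then to bound the number of terms and the size of the coefficients in this identity. Each term is $\pm \disc(A \cup B)$ with $|A \cup B| \le r$, so $|\disc(A\cup B)| \le D$; the number of such terms is at most the number of pairs $(A, B)$ with $A \subseteq X$, $B \subseteq Y$ and $|A| + |B| \le r$, but after collapsing by symmetry (the value $\disc(A\cup B)$ depends only on $|A|$ and $|B|$ up to... no — it depends on the actual sets), one instead groups: the coefficient of $\disc(A\cup B)$ in the expansion depends only on $(|A|, |B|, i)$, and these coefficients are bounded by binomial coefficients of the form $\binom{r}{\cdot}$, hence by $2^r$ or so. Summing $|{\rm coefficient}|$ over the $O(r)$ distinct size-pairs gives a bound of the shape $r^{2r}$ (being generous with the constant to absorb both the number of size classes and the binomial coefficients), which yields $|\disc_{i,r-i}(X,Y)| \le r^{2r} D$ as claimed.

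The main obstacle is getting the inclusion-exclusion identity exactly right: one must be careful that the ``expected'' terms transform the same way as the edge-count terms, which is clear if one phrases everything in terms of a formal linear functional $f \mapsto \sum_{e} f(e)$ applied once to the edge-indicator and once to the constant $p$-weighting on all $r$-sets, since both are determined by their values on ``contained-in'' events. A secondary bookkeeping issue is keeping track of which subset sizes actually appear (only $|A| \le i$ and $|B| \le r-i$ with $|A|+|B| \le r$ contribute with the right signs), but this only affects the constant, not the form of the bound. Once the identity is in hand, the bound $r^{2r}$ follows by crudely estimating $\sum \binom{i}{|A|}\binom{r-i}{|B|} \le 2^r \cdot 2^r \le 4^r \le r^{2r}$ for $r \ge 2$, which is more than enough.
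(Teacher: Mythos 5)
Your inclusion--exclusion plan has a genuine gap: the identity you need does not exist with coefficient sums bounded by a function of $r$ alone. You want to write $\disc_{i,r-i}(X,Y) = \sum_{A\subseteq X,\,B\subseteq Y} c_{A,B}\,\disc(A\cup B)$ with $\sum_{A,B} |c_{A,B}| \le r^{2r}$. As you yourself note, $\disc(A\cup B)$ depends on the actual sets $A,B$, not merely on $(|A|,|B|)$, so ``collapsing by symmetry'' forces all $c_{A,B}$ with $|A|=a,\ |B|=b$ to share a common value $c_{a,b}$; but then $\sum_{A,B} |c_{A,B}| = \sum_{a,b}|c_{a,b}|\binom{|X|}{a}\binom{|Y|}{b}$, which grows polynomially in $n$ unless essentially every $c_{a,b}$ vanishes. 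Already the case $r=3$, $i=1$ shows the problem concretely: from $\disc(X'\cup Y)-\disc(X')-\disc(Y) = \disc_{2,1}(X',Y)+\disc_{1,2}(X',Y)$ one isolates $\disc_{1,2}(X,Y)$ by summing over singletons $X'=\{x\}$, $x\in X$, giving the correct identity $\disc_{1,2}(X,Y) = \sum_{x\in X}\bigl[\disc(\{x\}\cup Y)-\disc(\{x\})-\disc(Y)\bigr]$, but this has $\Theta(|X|)$ terms with $\pm1$ coefficients, yielding only the bound $O(|X|)\cdot D$, not $r^{2r}D$. In short, you never write down the claimed identity, and any natural attempt to do so produces coefficient mass that scales with $n$.

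The paper avoids this by trading the discrete inclusion--exclusion for a polynomial identity. Take $Z$ a random subset of $X$ where each element is included independently with probability $p$; then $f(p):=\mathbb{E}\bigl(\disc(Z\cup Y)\bigr) = \sum_{i=0}^{r} p^{i}\,\disc_{i,r-i}(X,Y)$ is a degree-$r$ polynomial in $p$ whose coefficients are precisely the quantities to be bounded, and $|f(p)|\le D$ for every $p\in[0,1]$ since $f(p)$ is an average of $\disc$-values. A classical estimate (Chebyshev/Markov type, cited from Borwein--Erd\'elyi) for the coefficients of a degree-$r$ polynomial bounded by $D$ on $[0,1]$ then gives $|\disc_{i,r-i}(X,Y)| \le r^{2r}D$. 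The crucial advantage is that bounding $f$ on the continuum $[0,1]$ encodes an $n$-independent constraint, whereas your expansion over subsets inevitably drags in a factor growing with $n$.
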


\begin{proof}
Let $p\in [0,1]$, and let $Z$ be a random subset of $X$, each element chosen independently with probability $p$. Then
$$f(p):=\mathbb{E}(\disc(Z\cup Y))=\sum_{i=0}^{r}p^i\disc_{i,r-i}(X,Y).$$
Thus,  $f$ is a degree $r$ polynomial, and $f(p)\in [-D,D]$ for every $p\in [0,1]$. This implies that every coefficient of $f$ is at most $2^r r^{2r}D/r!< r^{2r}D$ in absolute value, see \cite{BE}. This finishes the proof.
\end{proof}

\begin{proof}[Proof of Theorem \ref{thm:main_disc}]
By taking complements if necessary, we may assume that the density of $H$ is at most $\frac{1}{2}$. For each $t \in \left\{2,\dots,r\right\}$, define the $t$-uniform multi-hypergraph $H_t$ by setting the multiplicity of $f \in V^{(t)}$ to be 
$$m(f) = \left \vert \left\{e \in E(H): f \subseteq e\right\}\right \vert.$$
Note that $H_r$ is simply $H$ itself.

Let $d_t$ denote the average degree of $H_t$, $p_t$ denote the edge density, and let $d=d_r$ and $p=p_r$. Since 
$$ p_t \binom{n}{t} = e(H_t) = p\binom{n}{r}\binom{r}{t},$$
it follows that 
$$p_t = \binom{n-t}{r-t}p.$$
Similarly, it is easy to conclude that we have 
$$d_t = \binom{r-1}{t-1} d.$$
In particular, it is straightforward to verify that for all $t \in \left\{2,\dots,r\right\}$,  
\begin{equation}
\label{equ:ratios}
\frac{p_{t+1}}{p_t} = \frac{r-t}{n-t}.
\end{equation}
We start by proving that for each $H$, at least one of the graphs $H_t$ has a suitable density so that Theorem \ref{thm:BollobasScott main thm} applies. 
\begin{claim}
    There exists $t \in \left\{2,\dots,r\right\}$ for which we have $\frac{1}{2n} \leq p_t \leq \frac{1}{2}$.
\end{claim}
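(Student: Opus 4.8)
The goal is to show that among the multi-hypergraphs $H_2,\dots,H_r$, at least one has density in the window $[\tfrac{1}{2n},\tfrac12]$, so that Theorem \ref{thm:BollobasScott main thm} can be applied to it. The densities $p_2,\dots,p_r$ form a chain with $p_r = p \le \tfrac12$ (by our reduction to the case of density at most $\tfrac12$) and, by \eqref{equ:ratios}, consecutive ratios $p_{t+1}/p_t = (r-t)/(n-t)$. The plan is to examine the two extreme cases and argue the sequence cannot ``jump over'' the interval.

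\medskip

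First I would look at $p_2$. If $p_2 \le \tfrac12$, then since $p_r \le p_2$ (the ratios $(r-t)/(n-t)$ are all at most $1$ for $n$ large, so the sequence is non-increasing from $t=2$ to $t=r$), actually I should be a bit more careful: I want to find \emph{some} index in the window, not necessarily $p_r$. The cleanest route is: let $t_0$ be the largest index in $\{2,\dots,r\}$ with $p_{t_0} \ge \tfrac{1}{2n}$; such an index exists provided $p_2 \ge \tfrac1{2n}$, which holds because $p_2 = \binom{n-2}{r-2} p$ and $p = d/\binom{n}{r}\cdot\text{(const)}$ with $d \ge 1$, so $p_2 = \Theta_r(d/n^2) \ge \Omega_r(1/n^2)$ — wait, that is only $\Omega(1/n^2)$, which is smaller than $1/(2n)$. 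So I need to handle the possibility that \emph{all} of $p_2,\dots,p_r$ are below $\tfrac{1}{2n}$, or rather to locate the crossover correctly. The right statement: the sequence $p_2 \ge p_3 \ge \dots \ge p_r$ is non-increasing, each successive ratio is $(r-t)/(n-t) \ge 1/n$ for $t \le r-1$, so consecutive terms differ by at most a factor of $n$; hence the sequence cannot skip over an interval of multiplicative length $n$, and $[\tfrac1{2n},\tfrac12]$ has multiplicative length $n$. Concretely: if $p_2 \le \tfrac12$ we are immediately done with $t=2$ (using $p_2 \ge \dots$, and we still need $p_2 \ge \tfrac1{2n}$; if $p_2 < \tfrac1{2n}$ then all $p_t < \tfrac1{2n} \le \tfrac12$, contradicting\dots no). Let me restate the argument I would actually write.

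\medskip

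\textbf{Cleaner argument.} Since $p = p_r \le \tfrac12$, consider the largest $t \in \{2,\dots,r\}$ such that $p_t \le \tfrac12$; call it $t_0$ (this set is nonempty as it contains $r$). If $t_0 = 2$, then also $p_2 \le \tfrac12$, and we must check $p_2 \ge \tfrac1{2n}$: but $p_2 = \binom{n-2}{r-2} p \ge \tfrac{1}{2}\cdot\tfrac{(r-1)d}{\binom{n}{r}}\binom{n-2}{r-2}$; a short computation gives $p_2 = \Theta_r(d n^{-2})$, and if this is $< \tfrac1{2n}$ i.e. $d = O_r(n)$, then $p_t$ is even smaller for larger $t$ — this branch needs the companion bound $\disc(H) = \Omega_r(dn)$ directly, which for small $d$ follows from an independent-set argument as in the remark after Theorem \ref{thm:main_disc}. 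The cleanest split is therefore: \emph{Case A}, $d \ge n$ (so $p_2 = \Omega_r(dn^{-2}) \ge \Omega_r(n^{-1})$, and one shows $p_2 \ge \tfrac1{2n}$ precisely, giving $t=2$ in the window if also $p_2 \le \tfrac12$; if $p_2 > \tfrac12$, walk up the chain — each step multiplies by $(r-t)/(n-t) < 1$, and the first time we drop to $\le \tfrac12$ we cannot have dropped below $\tfrac1{2n}$ since each multiplicative step is by a factor $\ge 1/n$, so that index lies in $[\tfrac1{2n},\tfrac12]$); \emph{Case B}, $d < n$, where the theorem's conclusion $\disc(H) = \Omega_r(\sqrt d n) = O_r(n^{3/2})$ is weaker than the bound $\Omega_r(dn)$ already available from an independent set of size $\Omega(n)$ — but actually here $d$ can be as large as $n$, so $\Omega_r(dn)$ can be as large as $n^2$, certainly $\ge \Omega_r(\sqrt d n)$, so this case is fine to dispatch separately.

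\medskip

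\textbf{Main obstacle.} The only real subtlety is the boundary bookkeeping: verifying that the multiplicative gap between consecutive $p_t$ is always at least $1/n$ (which needs $r - t \ge 1$, i.e.\ holds for $t \le r-1$, exactly the steps we traverse) so that the chain $p_2,\dots,p_r$ cannot leap over the window $[\tfrac1{2n},\tfrac12]$, together with pinning down the lower end — ensuring $p_2 \ge \tfrac1{2n}$ whenever $d$ is not too small, and invoking the elementary independent-set bound $\disc(H) = \Omega_r(dn)$ in the residual small-$d$ regime. Once the claim is established, the theorem follows by applying Theorem \ref{thm:BollobasScott main thm} to $H_{t}$ for the good index $t$, which yields $\disc^+(H_t)\cdot\disc^-(H_t) = \Omega(p_t(1-p_t) n^{t+1})$, and then transferring this back to a bound on $\disc(H)$ via Lemma \ref{lem:BollobasScott lemma}, which says each $\disc_{i,r-i}$ of $H$ (hence each $\disc(H_t)$, up to constants) is controlled by $\disc(H)$.
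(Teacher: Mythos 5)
Your core idea — that the densities $p_2\ge p_3\ge\dots\ge p_r$ form a non-increasing chain whose consecutive ratios $p_{t+1}/p_t=(r-t)/(n-t)$ are all at least $1/n$, so the chain cannot jump over the multiplicative window $[\tfrac{1}{2n},\tfrac12]$ — is exactly the paper's argument, and it is correct. However, your execution has a concrete error that sends you on an unnecessary and ultimately incorrect detour. You compute $p_2=\Theta_r(d/n^2)$, but this is wrong: since $e(H)=dn/r$ and $\binom{n-2}{r-2}/\binom{n}{r}=r(r-1)/(n(n-1))$, one gets
$$p_2=\binom{n-2}{r-2}p=\frac{dn}{r}\cdot\frac{r(r-1)}{n(n-1)}=\frac{(r-1)d}{n-1}=\Theta_r\left(\frac{d}{n}\right).$$
In particular, $d\ge 1$ already forces $p_2\ge\frac{r-1}{n-1}\ge\frac{1}{n-1}>\frac{1}{2n}$, so the lower endpoint of the chain is always above the window for every $d$ in the theorem's range, and no case split on $d$ is needed.

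The patch you introduce to cover $d<n$ is itself broken: you appeal to an independent set of size $\Omega(n)$ to obtain $\disc(H)=\Omega_r(dn)$, but such an independent set is only guaranteed when $d<1$ (then most vertices are isolated), which is the situation mentioned in the remark after Theorem~\ref{thm:main_disc}. For $1\le d<n$ — e.g.\ a random $d$-regular hypergraph with $d=\sqrt n$ — there is no linear-size independent set, so this branch of your argument fails. Once you use the correct $p_2=\Theta_r(d/n)$, the paper's one-line finish applies: take the largest $t$ with $p_t\ge\frac{1}{2n}$ (it exists since $p_2\ge\frac{1}{2n}$); if $t=r$ then $p_r=p\le\frac12$ directly, and if $t<r$ then $p_{t+1}<\frac{1}{2n}$ and $p_t\le np_{t+1}<\frac12$, so $p_t$ lies in the window.
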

\begin{proof}
When $d = \frac{1}{2} \binom{n-1}{r-1}$, we can take $t = r$ as one clearly has $p_r = \frac{1}{2}$. When $d = 1$, it is easy to verify that $e(H_2) = \binom{r}{2} e(H) \geq \frac{n(r-1)}{2}$. Hence it follows that $p_2 \geq \frac{r-1}{n-1}$. But equation $\eqref{equ:ratios}$  implies that $p_2\geq \dots\geq p_r$ and $p_{t+1}>np_t$, so taking the smallest index $t$ satisfying $p_t\geq \frac{1}{2n}$, we have $p_t\leq 1/2$ as well.  
\end{proof}
Since $H_t$ satisfies the conditions of Theorem \ref{thm:BollobasScott main thm} and $d = \Theta_r(d_t)$, we have $\disc\left(H_t\right) = \Omega_{r}(\sqrt{d}n)$. Let $U \subseteq V(H)$ be chosen so that $\vert \disc_{H_t}\left(U\right) \vert = \disc\left(H_t\right)$. In order to infer results concerning $\disc(H)$, we rewrite the terms $e_{H_t}\left(U\right)$ and $p_t \binom{|U|}{t}$ occurring in the expression $\disc_{H_t}\left(U\right)$. First of all, observe that 
\begin{equation}
\label{equ:edges}
e_{H_t}(U) = \sum_{j=t}^{r} \binom{j}{t} e_{j,r-j}\left(H;U,U^c\right).
\end{equation}
By using standard identities for binomial coefficients, we also conclude that 
\begin{equation}
\label{equ:binomials}
p_t \binom{|U|}{t} = p \binom{n-t}{r-t} \binom{|U|}{t} = p \binom{j}{t} \sum_{j=t}^{r} \binom{|U|}{j} \binom{|U^c|}{r-j}.
\end{equation}
Combining equations \eqref{equ:edges} and \eqref{equ:binomials}, we conclude that 
$$|\disc_{H_t}(U)|=\left\vert \sum_{j=t}^{r} \binom{j}{t} \disc_{j,r-j}\left(H;U,U^c\right) \right\vert = \Omega_r(\sqrt{d}n).$$
Thus, by the triangle inequality there exists $j \in \left\{t,\dots,r\right\}$ for which
$$\left\vert\disc_{j,r-j}\left(H;U,U^c\right)\right\vert = \Omega_r(\sqrt{d}n).$$
But then Lemma \ref{lem:BollobasScott lemma} implies that $\disc(H) = \Omega_r(\sqrt{d}n),$ which completes the proof. 
\end{proof}



\section*{Appendix: Proof of Theorem \ref{thm:BollobasScott main thm}}

For completeness, we now present the proof of Theorem \ref{thm:BollobasScott main thm}. The proof follows the same lines as the proof in \cite{BS06}, with appropriate trivial modifications added to accommodate the fact we are dealing with multi-hypergraphs. 
In the proof, we also need four preliminary results from \cite{BS06}. The only one of these that is not identical to its counterpart in \cite{BS06} is the fourth one, whose proof we include.

Given an edge-weighting $w$ on a complete $r$-uniform hypergraph $H$ and disjoint sets $X_1,\dots,X_t \subseteq V(H)$, we define $d_{k_1,\dots,k_t}(X_1,\dots,X_t)$ by setting $d_{k_1,\dots,k_t}(X_1,\dots,X_t) = \sum_{e} w(e)$, where the sum is taken over all edges $e$ satisfying the condition $\vert e \cap X_i\vert  = k_i$  for each $1 \leq i \leq t$. In case no edge-weighting is given, we assume $w(e)=1$ for every edge. 

\begin{lemma}[Lemma 6, \cite{BS06}]
\label{lemma:Bernoulli}
Let $\varepsilon_i$ be i.i.d.\ Bernoulli random variables with $\varepsilon_i \in \{-1,1\}$, and let $\mathbf{a} = (a_i)_{i=1}^{n}$ be a sequence of real numbers. Then 
$$\mathbb{E}\left\vert \sum_{j=1}^n a_i \varepsilon_i \right\vert \geq \frac{\vert\vert \mathbf{a}\vert\vert_1}{\sqrt{2n}}.$$
\end{lemma}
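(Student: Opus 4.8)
The plan is to deduce this from the $L^{1}$ Khintchine inequality with its optimal constant. First, since each $\varepsilon_i$ is symmetric, replacing $a_i$ by $|a_i|$ does not change the law of $\bigl|\sum_i a_i\varepsilon_i\bigr|$, so I may assume $a_i\ge 0$. Then Cauchy--Schwarz gives $\|\mathbf{a}\|_1=\sum_i a_i\le\sqrt{n}\,\bigl(\sum_i a_i^2\bigr)^{1/2}=\sqrt{n}\,\|\mathbf{a}\|_2$, so it is enough to prove the dimension-free estimate
$$\mathbb{E}\,\Bigl|\sum_{i=1}^{n}a_i\varepsilon_i\Bigr|\ \ge\ \tfrac{1}{\sqrt 2}\,\|\mathbf{a}\|_2 .$$
Here there is no slack to waste: for $n=2$, $a_1=a_2=1$ one has $\mathbb{E}\,|\varepsilon_1+\varepsilon_2|=1$, which equals both $\|\mathbf{a}\|_2/\sqrt 2$ and $\|\mathbf{a}\|_1/\sqrt{2n}$, so the lemma is sharp. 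In particular, writing $S=\sum_i a_i\varepsilon_i$, the cheap fourth-moment argument --- combining $\mathbb{E}[S^4]=3\|\mathbf{a}\|_2^4-2\sum_i a_i^4\le 3\|\mathbf{a}\|_2^4$ with H\"older's inequality $\mathbb{E}[S^2]\le(\mathbb{E}|S|)^{2/3}(\mathbb{E}[S^4])^{1/3}$ --- only yields the constant $1/\sqrt 3$, so the optimal constant $1/\sqrt 2$ is genuinely needed. If one is content to cite the literature, the best-constant Khintchine inequality of Szarek now finishes the argument; otherwise one proceeds as follows.

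I would use the identity $|x|=\frac{2}{\pi}\int_{0}^{\infty}t^{-2}\bigl(1-\cos(xt)\bigr)\,dt$. Since the $\varepsilon_i$ are independent with $\mathbb{E}\bigl[e^{\mathrm{i}a_i\varepsilon_i t}\bigr]=\cos(a_it)$, taking expectations and applying Fubini gives
$$\mathbb{E}\,|S|=\frac{2}{\pi}\int_{0}^{\infty}\frac{1-\prod_{i=1}^{n}\cos(a_it)}{t^{2}}\,dt .$$
Rescaling so that $\sum_i a_i^2=1$, the bound $\mathbb{E}|S|\ge 1/\sqrt2$ becomes the assertion that, over all $n$ and all unit vectors $\mathbf{a}$ with nonnegative entries, the integral $\int_{0}^{\infty}t^{-2}\bigl(1-\prod_i\cos(a_it)\bigr)\,dt$ is minimized at $n=2$, $a_1=a_2=1/\sqrt2$; there $\prod_i\cos(a_it)=\cos^{2}(t/\sqrt2)=\tfrac12\bigl(1+\cos(\sqrt2\,t)\bigr)$ and the integral equals $\tfrac{\pi}{2\sqrt2}$ by the standard evaluation $\int_0^{\infty}u^{-2}(1-\cos u)\,du=\pi/2$, which gives exactly $\mathbb{E}|S|=1/\sqrt2$.

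The hard part is this extremal step. One cannot just dominate $\prod_i\cos(a_it)$ by the Gaussian factor $e^{-t^{2}/2}$, because $\cos\theta\le e^{-\theta^{2}/2}$ fails for large $\theta$: there the individual cosines oscillate and their product can return to values near $1$, where a crude estimate is worthless. The standard remedy is a case split on the size of $t$ --- on a bounded window one uses $\cos\theta\le e^{-\theta^2/2}$ together with a convexity/smoothing argument that pushes the mass of $\mathbf{a}$ onto two equal coordinates, while on the complementary range one controls the oscillating product by hand and shows it contributes too little to compensate. Everything else --- the symmetrization, the Cauchy--Schwarz reduction, the integral representation of $|x|$, and the fourth-moment computation --- is routine.
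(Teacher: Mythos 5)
Your reduction is clean and correct: symmetrizing to $a_i\ge 0$, applying Cauchy--Schwarz to get $\|\mathbf{a}\|_1\le\sqrt{n}\,\|\mathbf{a}\|_2$, and then invoking the sharp $L^1$ Khintchine inequality $\mathbb{E}|S|\ge\|\mathbf{a}\|_2/\sqrt{2}$ (Szarek, 1976) indeed yields the claimed bound, and you correctly observe that both links of this chain are simultaneously tight at $n=2$, $a_1=a_2$, so the constant $1/\sqrt{2}$ genuinely cannot be relaxed (the fourth-moment shortcut giving $1/\sqrt{3}$ is insufficient, exactly as you say). Note that the paper itself supplies no proof of this lemma --- it simply cites Lemma~6 of Bollob\'as and Scott \cite{BS06} --- so there is no in-paper argument to match against; your reduction-plus-citation is the standard and entirely acceptable route, and the factor $\sqrt{2n}=\sqrt{2}\cdot\sqrt{n}$ in the denominator is a strong hint that this is also how \cite{BS06} obtains the constant.

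However, the portion of your writeup that aims at a self-contained proof of Szarek's inequality does not actually deliver one. The integral representation $|x|=\tfrac{2}{\pi}\int_0^\infty t^{-2}(1-\cos(xt))\,dt$, the resulting formula $\mathbb{E}|S|=\tfrac{2}{\pi}\int_0^\infty t^{-2}\bigl(1-\prod_i\cos(a_it)\bigr)\,dt$, and the evaluation at the conjectured extremal point $a_1=a_2=1/\sqrt{2}$ are all correct. But the assertion that this integral is \emph{minimized} there, over all $n$ and all unit vectors with nonnegative entries, is precisely the entire content of Szarek's theorem, and your paragraph starting ``The hard part is this extremal step'' only describes the shape of a possible argument (a case split in $t$, the bound $\cos\theta\le e^{-\theta^2/2}$ on a window, a ``convexity/smoothing'' step, hand control of the oscillatory tail) without carrying any of it out. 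As you yourself flag, this is where all the difficulty lives; it is not a routine remainder that can be left to the reader. So the proposal is correct if you cite Szarek (or Haagerup, or Lata{\l}a--Oleszkiewicz), which is perfectly fine here since the paper already treats the lemma as a black box, but the ``otherwise one proceeds as follows'' branch is a sketch of intent rather than a proof, and should either be completed or dropped in favor of the citation.
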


\begin{lemma}[Lemma 10, \cite{BS06}]
    \label{lem: lemma 10}
    Let $H$ be a complete $r$-uniform hypergraph of order $n$ with edge-weighting $w$. Let $V(H) = U \cup W$ be a random bipartition, with each vertex assigned to one of the sides independently with probability $\frac{1}{2}$. Then 
    $$\mathbb{E} \sum_{K \in U_{r-1}^{(r-1)}} \vert d_{r-1,1}(K,W)\vert \geq r2^{-r} \sum_{L \in V(G)^{r}} |w(L)|/\sqrt{2n}.$$
\end{lemma}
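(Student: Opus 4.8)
The plan is to bound the contribution of each $(r-1)$-element set $K$ separately, using the standard device of conditioning on the event that $K$ lands entirely inside $U$. Fix $K\in\binom{V(H)}{r-1}$. Since each vertex is placed in $U$ or in $W$ independently with probability $\tfrac12$, we have $\mathbb{P}(K\subseteq U)=2^{-(r-1)}$, and conditionally on this event the remaining $n-r+1$ vertices are still assigned independently, each landing in $W$ with probability $\tfrac12$. Because $|K|=r-1$, the edges counted by $d_{r-1,1}(K,W)$ are precisely those of the form $K\cup\{v\}$ with $v\in W$; writing $w_v:=w(K\cup\{v\})$ for $v\in V(H)\setminus K$, we therefore have, conditionally on $K\subseteq U$,
$$d_{r-1,1}(K,W)=\sum_{v\in V(H)\setminus K}w_v\,\mathbf{1}[v\in W],$$
a weighted sum of independent $\{0,1\}$-valued indicators.

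The one step that requires care is converting these $\{0,1\}$ indicators into symmetric $\pm1$ signs, so that Lemma~\ref{lemma:Bernoulli} becomes applicable. Writing $\mathbf{1}[v\in W]=\tfrac12(1+\varepsilon_v)$ with the $\varepsilon_v$ i.i.d.\ uniform on $\{-1,1\}$, we obtain $d_{r-1,1}(K,W)=\tfrac12S+\tfrac12Z$, where $S=\sum_{v\notin K}w_v$ is deterministic and $Z=\sum_{v\notin K}w_v\varepsilon_v$ is symmetric, so that $-Z$ has the same distribution as $Z$. Averaging $|S+Z|$ against $|S-Z|$ and using $|S+z|+|S-z|\ge 2|z|$ gives the conditional bound $\mathbb{E}\,|d_{r-1,1}(K,W)|\ge\tfrac12\,\mathbb{E}|Z|$. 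Applying Lemma~\ref{lemma:Bernoulli} to the at most $n$ summands of $Z$ gives $\mathbb{E}|Z|\ge\frac{1}{\sqrt{2n}}\sum_{v\in V(H)\setminus K}|w_v|$, and multiplying through by $\mathbb{P}(K\subseteq U)=2^{-(r-1)}$ yields
$$\mathbb{E}\bigl[\mathbf{1}[K\subseteq U]\,|d_{r-1,1}(K,W)|\bigr]\ \ge\ \frac{2^{-r}}{\sqrt{2n}}\sum_{v\in V(H)\setminus K}\bigl|w(K\cup\{v\})\bigr|.$$

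Finally I would sum this over all $(r-1)$-element sets $K$. The left-hand sides add up to $\mathbb{E}\sum_{K\in\binom{U}{r-1}}|d_{r-1,1}(K,W)|$, which is exactly the quantity in the statement, while on the right each $r$-element set $L$ is counted exactly $r$ times in $\sum_K\sum_{v\notin K}|w(K\cup\{v\})|$ — once for each way of singling out a vertex $v\in L$ and putting $K=L\setminus\{v\}$. Hence that double sum equals $r\sum_{L\in\binom{V(H)}{r}}|w(L)|$, and we conclude
$$\mathbb{E}\sum_{K\in\binom{U}{r-1}}|d_{r-1,1}(K,W)|\ \ge\ \frac{r\,2^{-r}}{\sqrt{2n}}\sum_{L\in\binom{V(H)}{r}}|w(L)|,$$
which is the claim.

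The only genuine obstacle is the symmetrization in the second step: Lemma~\ref{lemma:Bernoulli} is phrased for $\pm1$ Bernoulli variables, whereas a random bipartition naturally produces $\{0,1\}$ indicators, and the deterministic part $S$ has to be discarded without damaging the constant. The elementary fact that $\mathbb{E}|S+Z|\ge\mathbb{E}|Z|$ for symmetric $Z$ takes care of this and costs only the factor $\tfrac12$ already absorbed into the exponent $2^{-r}$. The rest — the probability $2^{-(r-1)}$ of the conditioning event, the harmless replacement of $n-r+1$ by $n$ inside Lemma~\ref{lemma:Bernoulli}, and the $r$-fold double count giving the leading factor $r$ — is routine.
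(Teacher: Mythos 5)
Your proof is correct. The paper itself cites this lemma from Bollob\'as--Scott \cite{BS06} and does not reproduce a proof (it only re-proves Lemma 13, the one that required genuine modification for multi-hypergraphs), so there is no in-paper proof to compare against; however, your argument is exactly in line with the symmetrization technique that the paper does use in its proof of Lemma \ref{lemma:Lemma13}. In both places the $\{0,1\}$ indicator is rewritten as $\tfrac12(1+\varepsilon_v)$ and the deterministic part is dropped after pairing $|S+Z|$ with $|S-Z|$, and then Lemma \ref{lemma:Bernoulli} is applied to the symmetric part; you have added the conditioning on $K\subseteq U$ (contributing $2^{-(r-1)}$) and the $r$-fold double count to produce the leading factor $r$, and the bookkeeping of constants ($2^{-(r-1)}\cdot\tfrac12 = 2^{-r}$, the harmless replacement of $n-r+1$ by $n$) all checks out.
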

\begin{lemma}[Lemma 11, \cite{BS06}]
    \label{lem:lemma 11}
    Let $H$ be a $r$-uniform hypergraph of order $n$ with edge-weighting $w$. Suppose that $\alpha \geq 1$ and that there exists disjoint sets $X, Y\subset V(H)$ with 
    $$d_{1,r-1}(X,Y) + \alpha d(Y) = M \geq 0.$$
    Then at least one of the following holds 
    \begin{itemize}
        \item[(i)] $\disc^{+}(H) = 2^{-3r^2}M/\alpha$, or
        \item[(ii)]  $\disc^{-}(H) = 2^{-3r^2}M\alpha$.
    \end{itemize}
\end{lemma}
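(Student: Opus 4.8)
The plan is to use a random-augmentation argument. Form $Z_q\subseteq X$ by keeping each vertex of $X$ independently with probability $q\in[0,1]$, and study the discrepancy of $Y\cup Z_q$. Writing $D=\disc(H)$, the key observation is that
\[
g(q):=\mathbb{E}\,\disc(Y\cup Z_q)=\sum_{i=0}^{r}q^{i}\,d_{i,r-i}(X,Y)
\]
is a polynomial in $q$ of degree at most $r$, whose constant term is $d_{0,r}(X,Y)=d(Y)$ and whose linear coefficient is $d_{1,r-1}(X,Y)$. Since $\disc(U)\le\disc^{+}(H)$ for every vertex set $U$, taking expectations gives $\disc^{+}(H)\ge g(q)$ for all $q\in[0,1]$.

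The crucial step is to evaluate at $q=1/\alpha$, which is admissible precisely because $\alpha\ge1$. Substituting the hypothesis in the form $d_{1,r-1}(X,Y)=M-\alpha\,d(Y)$, the constant and linear parts of $g(1/\alpha)$ collapse to exactly $M/\alpha$, while the remaining terms are controlled by Lemma~\ref{lem:BollobasScott lemma}: since $|d_{i,r-i}(X,Y)|\le r^{2r}D$ and $\alpha^{-i}\le\alpha^{-2}$ for $i\ge2$, we get $\disc^{+}(H)\ge g(1/\alpha)\ge M/\alpha-r^{2r+1}D/\alpha^{2}$.

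From here I would split into two cases according to the size of $D$. If $D\le \alpha M/(2r^{2r+1})$, then the error term is at most $M/(2\alpha)$, so $\disc^{+}(H)\ge M/(2\alpha)\ge 2^{-3r^2}M/\alpha$ and (i) holds. If instead $D>\alpha M/(2r^{2r+1})$, then, since $D=\max\{\disc^{+}(H),\disc^{-}(H)\}$, whichever of the two quantities equals $D$ already does the job: using $\alpha\ge1$ and the crude inequality $2r^{2r+1}\le 2^{3r^2}$ one obtains $\disc^{+}(H)\ge 2^{-3r^2}M/\alpha$ when $\disc^{+}(H)=D$, and $\disc^{-}(H)\ge 2^{-3r^2}\alpha M$ when $\disc^{-}(H)=D$; that is, (i) or (ii).

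I expect the only genuinely delicate point to be organizing this dichotomy so that the asymmetric targets $M/\alpha$ and $M\alpha$ both emerge: the idea is that a ``small'' total discrepancy makes the evaluation at $q=1/\alpha$ essentially tight and yields the $M/\alpha$ bound for $\disc^{+}$, whereas a ``large'' total discrepancy automatically beats $2^{-3r^2}M\alpha$ on whichever side it lives. The remaining ingredients --- the polynomial identity for $g$, the inequality $\disc^{+}(H)\ge g(q)$, and the constant-chasing that lets $2^{-3r^2}$ absorb the $r^{-O(r)}$-type losses coming from Lemma~\ref{lem:BollobasScott lemma} --- are routine, and the precise value of the constant is immaterial.
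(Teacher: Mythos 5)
The paper does not actually prove this lemma: it is one of the three preliminary results quoted verbatim from \cite{BS06} (only the multi-hypergraph Lemma 13 receives a written proof in the appendix), so there is no in-paper argument to compare against. That said, your proof is correct, and it uses precisely the mechanism the paper sets up for this purpose, namely Lemma~\ref{lem:BollobasScott lemma}, the polynomial-coefficient bound $|\disc_{i,r-i}(X,Y)|\le r^{2r}D$. To spell out the two points worth pinning down: the polynomial identity $g(q)=\sum_{i=0}^r q^i d_{i,r-i}(X,Y)$ holds because $d_{i,r-i}(X,Y)$ only counts edges with $|e\cap X|=i$ and $|e\cap Y|=r-i$, hence entirely inside $X\cup Y$, and the containment probability of such an edge in $Y\cup Z_q$ is exactly $q^i$; and the bound from Lemma~\ref{lem:BollobasScott lemma} applies to the weighted $F$ with $w(e)=m(e)-p$ because for that weighting $d_{i,r-i}(F;X,Y)=\disc_{i,r-i}(H;X,Y)$ and $\disc(F)=\disc(H)$. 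With $M>0$ (the case $M=0$ is trivial since $\disc^{\pm}\ge 0$), the hypothesis indeed collapses the degree-$\le 1$ part of $g(1/\alpha)$ to $M/\alpha$, the tail is at most $r^{2r+1}D/\alpha^2$, and your dichotomy on $D$ versus $\alpha M/(2r^{2r+1})$ produces the required asymmetric targets $M/\alpha$ and $M\alpha$ with room to spare under $2r^{2r+1}\le 2^{3r^2}$ and $\alpha\ge 1$. One cosmetic remark: the lemma as printed has equality signs in (i) and (ii), but these are clearly typos for $\ge$ (the lemma is invoked that way in the main proof), and your argument gives the intended inequalities.
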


\begin{lemma}[Multi-hypergraph version of Lemma 13, \cite{BS06}]
    \label{lemma:Lemma13}
    Let $H$ be an $r$-uniform multi-hypergraph of order $n$ with $p\binom{n}{r}$ edges with $\frac{1}{2n} \leq p \leq 1 - \frac{1}{2n}$ and $n$ sufficiently large. Let $V(H) = X \cup Y$ be a random bipartition. Then 
    $$\mathbb{E}_{K\in X^{(r-1)}} \left\vert d_{r-1,1}(K,Y) - p |Y|\right\vert = \Omega_r\left(\sqrt{p(1-p)} n^{r - 1/2}\right)$$
\end{lemma}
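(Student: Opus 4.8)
The plan is to follow the proof in \cite{BS06}, with the routine modifications for multiplicities, by separating a pointwise estimate from a counting step. Throughout write $w(e)\in\mathbb{Z}_{\ge 0}$ for the multiplicity of the $r$-set $e$, set $N=n-r+1$, and for an $(r-1)$-set $K$ put
$$\deg(K)=\sum_{v\notin K}w(K\cup\{v\}),\qquad \delta_K=\deg(K)-pN,\qquad b_K^2=\sum_{v\notin K}\bigl(w(K\cup\{v\})-p\bigr)^2 .$$
Reveal the bipartition vertex by vertex, placing each vertex in $X$ or $Y$ independently with probability $\tfrac12$. Then $\mathbb{P}(K\subseteq X)=2^{-(r-1)}$, and conditioned on $K\subseteq X$ one has, since $K\cap Y=\emptyset$,
$$d_{r-1,1}(K,Y)-p|Y|=\sum_{v\notin K}\bigl(w(K\cup\{v\})-p\bigr)\,\xi_v,\qquad \xi_v=\mathbf 1[v\in Y]\ \text{ i.i.d.\ }\mathrm{Ber}(\tfrac12).$$
A direct second moment computation shows that, conditioned on $K\subseteq X$, the mean square of the right‑hand side equals $\tfrac14(\delta_K^2+b_K^2)$. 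Writing this random variable as $a_K+S_K$ with $a_K$ constant and $S_K=\tfrac12\sum_{v\notin K}(w(K\cup\{v\})-p)\varepsilon_v$ a Rademacher sum, the elementary bound $\mathbb{E}S_K^4\le 3(\mathbb{E}S_K^2)^2$ gives $\mathbb{E}(a_K+S_K)^4\le 3(\mathbb{E}(a_K+S_K)^2)^2$, hence by Hölder $\mathbb{E}|a_K+S_K|\ge\tfrac1{\sqrt3}\bigl(\mathbb{E}(a_K+S_K)^2\bigr)^{1/2}$. Summing over $K$,
$$\mathbb{E}\sum_{K\in X^{(r-1)}}\bigl|d_{r-1,1}(K,Y)-p|Y|\bigr|\ \ge\ \frac{2^{-(r-1)}}{2\sqrt3}\sum_{K\in V^{(r-1)}}\sqrt{\delta_K^2+b_K^2}.$$

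It then remains to prove the counting bound $\sum_{K\in V^{(r-1)}}\sqrt{\delta_K^2+b_K^2}=\Omega_r\bigl(\sqrt{p(1-p)}\,n^{r-1/2}\bigr)$, which is the main point of the argument. Set $\mu=\min(p,1-p)\cdot N$; since $\min(p,1-p)\ge\tfrac1{2n}$ and $N\ge n/2$ we have $\mu\ge\tfrac14$. The key claim is that \emph{every} $(r-1)$-set $K$ satisfies $\delta_K^2+b_K^2\ge\tfrac18\min(\mu,\mu^2)$, proved by a short case split on $p$. If $p\le\tfrac12$ (so $\mu=pN$): either $\deg(K)\le\mu/2$, and then $|\delta_K|=pN-\deg(K)\ge\mu/2$; or $\deg(K)>\mu/2$, and then the pointwise inequality $(t-p)^2\ge t(1-2p)+p^2$ — valid for integers $t\ge 0$, being equivalent to $t(t-1)\ge0$ — gives $b_K^2\ge(1-2p)\deg(K)+Np^2\ge pN/2=\mu/2$. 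If $p\ge\tfrac12$ (so $\mu=(1-p)N$): either $\deg(K)\le N-\mu/2$, and then at least $\mu/2$ of the $r$-sets $K\cup\{v\}$ have multiplicity $0$, whence $b_K^2\ge(\mu/2)p^2\ge\mu/8$; or $\deg(K)>N-\mu/2$, and then $\delta_K=\deg(K)-pN>(1-p)N-\mu/2=\mu/2$. Summing the claim over the $\binom{n}{r-1}$ sets $K$, and using $\binom{n}{r-1}\sqrt N=\Omega_r(n^{r-1/2})$ together with $\sqrt{p(1-p)}=\Theta\bigl(\sqrt{\min(p,1-p)}\bigr)$, yields the counting bound; the only case needing separate mention is $\mu<1$ (i.e.\ $\min(p,1-p)<1/N$), where $\mu=\Theta(1)$ and the target is itself $\Theta_r(n^{r-1})$. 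Combining with the previous display completes the proof.

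The difficulty lies entirely in the counting step. Using only the universal estimate $b_K^2\ge\min(p,1-p)^2\,N$ one obtains merely $\Omega_r\bigl(\min(p,1-p)\,n^{r-1/2}\bigr)$, which is smaller than the target by a factor $\sqrt{\max(p,1-p)/\min(p,1-p)}$; the resolution is that the $(r-1)$-sets of below‑average degree must instead be charged to $|\delta_K|$, which is of order $\mu=\min(p,1-p)N$ and not merely $\sqrt\mu$. The hypotheses $p\ge\tfrac1{2n}$ and $1-p\ge\tfrac1{2n}$ are used precisely to guarantee $\mu=\Omega(1)$, so that the per‑set contribution $\min(\sqrt\mu,\mu)$ stays bounded away from $0$, which is exactly what makes the bound tight in the boundary density regimes.
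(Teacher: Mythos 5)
Your proof is correct and reaches the same conclusion by a genuinely different route than the paper. Where the paper appeals to the Khintchine-type $\ell^1$ bound (its Lemma 6: $\mathbb{E}|\sum a_i\varepsilon_i| \geq \|a\|_1/\sqrt{2n}$) together with the monotonicity trick $\mathbb{E}|X+Y|\geq\mathbb{E}|X|$ to restrict attention to the support $s(K)\leq d(K)$, you instead use the $L^4$-to-$L^2$ hypercontractivity of Rademacher sums (the Paley--Zygmund-style inequality $\mathbb{E}|Z|\geq(\mathbb{E}Z^2)^{1/2}/\sqrt{3}$ when $\mathbb{E}Z^4\leq 3(\mathbb{E}Z^2)^2$), which hands you $\sqrt{\delta_K^2+b_K^2}$ directly without any restriction step. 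Correspondingly, your pointwise estimate keeps the $\ell^2$ quantity $b_K^2$ and exploits the algebraic identity $(t-p)^2\geq t(1-2p)+p^2$ for nonnegative integers $t$, whereas the paper bounds $(1-p)\sqrt{d(K)}$ plus $|r(K)|$ via a monotonicity argument on a function of $d(K)$. A further, arguably desirable, difference is that you treat $p\leq\tfrac12$ and $p\geq\tfrac12$ symmetrically by hand, rather than asserting "we may assume $p\leq\tfrac12$" --- a reduction that relies on complementation, which is less transparent for multi-hypergraphs than for simple ones. Your accounting of the degenerate regime $\mu=\min(p,1-p)N<1$, and the role of the hypotheses $p,1-p\geq\tfrac1{2n}$ in keeping the per-set contribution $\min(\sqrt\mu,\mu)$ bounded away from $0$, is also cleanly separated, which the paper leaves somewhat implicit. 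The two proofs yield the same order of magnitude; yours trades the classical $\ell^1$ Khintchine lemma for a slightly more computational fourth-moment argument, at the benefit of a more transparent counting step.
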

\begin{proof}
    As usual, we may assume that $p \leq \frac{1}{2}$. Given $K \in V(H)^{(r-1)}$,  we write $d(K)$ for the number of edges containing $K$, $s(K)$ for the number of vertices $v$ so that $K\cup\{v\}$ is an edge with multiplicity at least 1, and we define $r(K)=d(K) - p(n-r+1)$. We also write $m(e)$ for the multiplicity of an $r$-tuple $e \in V(H)^{(r)}$. Furthermore, for each $v \in V(H)$, we set $\rho_v \in \{0,1\}$ to be the indicator random variable of the event $v \in Y$, and we set $\varepsilon_v = 2\rho_v - 1 \in \{-1,1\}$. 
    
    Consider a fixed set $K \in V(H)^{(r-1)}$, and assume that $d(K) > 0$. As in \cite{BS06}, we deduce that
    \begin{equation*}
    \begin{aligned}
        \mathbb{E} \left\vert d_{r-1,1}(K,Y\setminus K) - p\vert Y \setminus K\vert \right\vert &= \mathbb{E} \left\vert \sum_{v\not \in K} \rho_v\left(m(K\cup\{v\}) - p\right)\right\vert  \\&= \mathbb{E} \left\vert \frac{1}{2} \sum_{v \not \in K} (m(K\cup\{v\}) - p) + \frac{1}{2}\sum_{v \not \in K} \varepsilon_v(m(K\cup\{v\})-p)\right\vert
        \\&\geq \frac{1}{2}\max\left(|r(K)|, \mathbb{E}\left\vert \sum_{v\not \in K} \varepsilon_v (m(K\cup\{v\})-p)\right\vert \right)
        \\& \geq \frac{1}{4}\left(\vert r(K) \vert + \mathbb{E} \left\vert \sum_{v \not \in K} \varepsilon_v(m(K\cup\{v\})-p)\right\vert \right)
    \end{aligned}
    \end{equation*}
    Label the vertices of $V(H)$ with $1,\dots,n$ so that $K = \{n-r+2,\dots,n\}$, and so that $m(K \cup \{i\})$ is positive if and only if $i \leq s(K)$. Thus, using that $\mathbb{E}|X+Y|\geq \mathbb{E}|X|$ for any random variable $X$ and independent Bernoulli random variable $Y$, we can write
    $$\mathbb{E}\left\vert \sum_{v\not \in K} \varepsilon_v (m(K\cup\{v\})-p)\right\vert \geq  \mathbb{E}\left\vert \sum_{i = 1}^{s(K)} \varepsilon_i (m(K\cup\{i\})-p)\right\vert.$$
    Since $m(K\cup\{i\}) - p > 0$ for every $i \leq s(K)$, it follows that 
    $$\sum_{i = 1}^{s(K)} \vert m(K\cup\{i\}) - p\vert  = d(K) - ps(K) \geq (1-p)d(K).$$
    Thus Lemma \ref{lemma:Bernoulli} implies that
    $$\mathbb{E}\left\vert \sum_{i = 1}^{s(K)} \varepsilon_i (m(K\cup\{i\})-p)\right\vert  \geq (1-p)\frac{d(K)}{\sqrt{2s(K)}}\geq (1-p)\sqrt{\frac{d(K)}{2}}.$$
    Note that this bound remains also true when $d(K) = 0$. 
    In particular, it follows that for every $K$ we have 
    $$\mathbb{E} \left\vert d_{r-1,1}(K,Y\setminus K) - p\vert Y \setminus K\vert \right\vert \geq \frac{1}{4}|r(K)| + (1-p)\frac{\sqrt{d(K)}}{8}.$$
    Hence we conclude that 
    \begin{equation*}
        \begin{aligned}
            \mathbb{E}_{K\in X^{(r-1)}} \left\vert d_{r-1,1}(K,Y) - p |Y|\right\vert &= \sum_{K \in V^{(r-1)}} \mathbb{P}\left(K \subseteq X\right) \cdot \mathbb{E} \left\vert  d_{r-1,1}(K,Y\setminus K) - p\vert Y \setminus K\vert \right\vert
            \\ &\geq 2^{-r-2}\sum_{K \in V^{(r-1)}} \left(\vert r(K) \vert + \frac{\sqrt{d(K)}}{2}\right)
        \end{aligned}
    \end{equation*}
    Our aim is to show that there exists a uniform constant $\alpha$ so that each term in the sum is bounded below by $\alpha \sqrt{pn}$ when $n$ is sufficiently large. Since $p \leq \frac{1}{2}$ and there are $\binom{n}{r-1}$ terms in the sum, this certainly implies the lemma. 

    Suppose that $n \geq 2r$, and first consider the case when $d(K) \geq 1$. Observe that the expression $\vert d(K) - p(n-r+1)\vert + \frac{\sqrt{d(K)}}{2}$ as a function of $d(K)$ is clearly increasing when $d(K) \geq p(n-r+1)$. It can also be easily shown to be decreasing for $d(K) \in \left[\frac{1}{16},p(n-r+1)\right]$. Thus whenever $d(K) \geq 1$, we conclude that 
    $$|r(K)| + \frac{\sqrt{d(K)}}{2} \geq \frac{\sqrt{p(n-r+1)}}{2} \geq \alpha_1 \sqrt{pn}$$ for some constant $\alpha_1>0$.

    On the other hand, when $d(K) = 0$, we have $|r(K)| = p(n-r+1)$. Since $\frac{1}{2n} \leq p \leq \frac{1}{2}$, it follows that $p(n-r+1) \geq \alpha_2 \sqrt{pn}$ for some constant $\alpha_2>0$. Thus we may take $\alpha = \min(\alpha_1,\alpha_2)$, and the result follows. 
\end{proof}
We are now ready to prove Theorem \ref{thm:BollobasScott main thm}.
\begin{proof}[Proof of Theorem \ref{thm:BollobasScott main thm}]
    The proof follows exactly the one presented in \cite{BS06}. To start with, let $F$ denote the complete $r$-uniform weighted hypergraph on $V(H)$ with edge-weight $w(e) = m(e) - p$, where $m(e)$ denotes the multiplicity of the edge $e \in H$. It is clear that we have $w(F) = 0$ and  $\disc^{\pm}(F) = \disc^{\pm}(H)$. We may also assume that $p \leq \frac{1}{2}$ and that $\disc^{-}(H) \geq \disc^{+}(H) = c_r\sqrt{p(1-p)}n^{(r+1)/2}/\alpha$, where $c_r$ is a constant to be chosen later and $\alpha\geq 1$.

    Define random sets $W_{r} = V(H) \supset W_{r-1} \supset \dots \supset W_1$ so that for each $i \leq r - 1$, $W_{i+1} = W_{i} \cup X_{i+1}$ is a random bipartition of $W_{i+1}$, where each vertex is assigned independently to either of the two vertex classes with probability $\frac{1}{2}$. Define weightings $w_{i}$ such that for every $K\in W_{i}^{(i)}$, 
    $$w_{i}(K) = d_{i,1,\dots,1}(K,X_{i+1},\dots,X_r),$$
    with the convention that $w_r = w$. Lemma \ref{lem: lemma 10} implies that $i$ we have 
    $$\mathbb{E} \sum_{K\in W_{i}^{(i)}} \vert w_{i}(K)\vert \geq (i+1)2^{-i-1} \sum_{L\in W_{i+1}^{(i+1}} |w_{i+1}(L)|/\sqrt{2n}.$$
    Note that Lemma \ref{lemma:Lemma13} implies that 
    $$\mathbb{E} \sum_{K\in W_{r-1}^{(r-1)}} |w_{r-1}(K)| = \Omega_r\left(\sqrt{p(1-p)}n^{r-1/2}\right).$$
    Combining these two observations, we conclude that 
    $$\mathbb{E} \sum_{x\in W_1} |w_{1}(x)| = \Omega_r\left(\sqrt{p(1-p)}n^{(r+1)/2}\right).$$
    Let $X_{1}^{+} = \left\{w \in W_1: d_{1,\dots,1}(w,X_2,\dots,X_r) > 0 \right\}$. Note that
    \begin{equation}
        \mathbb{E} d_{1,\dots,1}(X_1^{+},X_2,\dots,X_r) = \Omega_{r}\left(\sqrt{p(1-p)}n^{(r+1)/2}\right).
    \end{equation}
    Define $V_0$ by setting $V_0 = X_1^{+} \cup \bigcup_{i=2}^{r} X_i$- Given a non-empty set $S \subset \left\{2,\dots,r\right\}$,  let $V_S = \bigcup_{i \in S} X_i$ and 
    $$E_{S} = \left\{K \cup \left\{x\right\}: x \in X_1^{+}, K \in V_{S}^{r-1}, |K \cap X_i| > 0 \text{ } \forall i \in S \right\}.$$ 
    Note that the sets $E_{S}$ partition the edges in $V_0$ that intersect $X_1^{+}$ in exactly one vertex. We also write $d_S = \sum_{K\in E_S} w(K)$, and observe that $d_{1,r-1}(X_1^{+},V_S) = \sum_{\emptyset \neq T \subset S} d_T$ and $d_{\{2,\dots,r\}} = d_{1,\dots,1}(X_1^{+},X_2,\dots,X_r)$.

    Let $S_0$ be minimal with $\vert d_{S_0}\vert \geq (2k)^{-k+|S|}d_{\{2,\dots,r\}}$. As in $\cite{BS06}$, we conclude that 
    \begin{align*}
    \max_{S \subset \left\{2,\dots,r\right\}} \vert d_{1,r-1}(X_1^{+},V_S) \vert &\geq \vert d_{1,r-1}\left(X_1^{+},V_{S_0}\right) \vert
    \\ &\geq |d_{S_0}| - \sum_{\emptyset \neq T \subset S_0} |d_T|
    \\ &= \Omega_{r}(d_{\{2,\dots,r\}}).
    \end{align*}
    Hence we conclude that there exists $S \subset \left\{2,\dots,r\right\}$ with 
    $$\mathbb{E} \left \vert d_{1,r-1}\left( X_{1}^{+}, V_S \right) \right \vert = \Omega_{r}\left(\sqrt{p(1-p)}n^{(r+1)/2}\right).$$
    Define $X_{S}^{+} = \left\{x \in W_1: d_{1,r-1}(\{x\},V_S) > 0 \right\}$. Since $\mathbb{E} d_{1,r-1}(W_1,V_S) = 0$ and $\mathbb{E} d(V_S) = 0$, we conclude that 
    $$\mathbb{E} d_{1,r-1}(X_S^{+},V_S) + \alpha d(V_S) = \beta_r \sqrt{p(1-p)} n^{(r+1)/2},$$
    where $\beta_r$ is some constant depending only on $r$. Set $c_r = \beta_r \cdot 2^{-3r^2-1}$, and recall that the constant $\alpha$ is chosen so that 
    $$\disc^{+}(H) = c_r \sqrt{p(1-p)}n^{(r+1)/2}/\alpha.$$
    If $\alpha \leq 1$, we are certainly done, as in this case both $\disc^{+}(H)$ and $\disc^{-}(H)$ are $\Omega_{r}\left(\sqrt{p(1-p)}n^{(r+1)/2}\right)$. Otherwise, since $\disc^{+}(F) = \disc^{+}(H) < 2^{-3r^2} \beta_r \sqrt{p(1-p)}n^{(r+1)/2} /\alpha$, Lemma \ref{lem:lemma 11} implies that 
    $$\disc^{-}(H) = \disc^{-}(F) \geq 2^{-3r^2} \beta_r \sqrt{p(1-p)}n^{(r+1)/2} \alpha.$$
    In particular, it follows that 
    $$\disc^{-}(H) \cdot \disc^{+}(H) = \Omega_r\left(p(1-p)n^{r+1}\right),$$
    which completes the proof. 
\end{proof}

\end{document}